\theoremstyle{plain}
\newtheorem{Pocz}{Poczatek}[section]
\newtheorem{Proposition}[Pocz]{Proposition}
\newtheorem{Theorem}[Pocz]{Theorem}
\newtheorem{Corollary}[Pocz]{Corollary}
\newtheorem{Lemma}[Pocz]{Lemma}
\newtheorem{Observation}[Pocz]{Observation}
\newtheorem{Problem}[Pocz]{Problem}
\theoremstyle{definition}
\newtheorem{Definition}[Pocz]{Definition}
\theoremstyle{remark}
\newtheorem{Remark}[Pocz]{Remark}
\newtheorem{Exercise}[Pocz]{Exercise}
\DeclareMathOperator*{\Leb}{Leb}
\DeclareMathOperator*{\diam}{diam}
\DeclareMathOperator*{\st}{st}
\DeclareMathOperator*{\hor}{hor}
\DeclareMathOperator*{\halo}{halo}
\numberwithin{equation}{section}
\title[Coarse amenability versus paracompactness]
{Coarse amenability versus paracompactness}
\author{M.~Cencelj}
\address{IMFM,
Univerza v Ljubljani,
Jadranska ulica 19,
SI-1111 Ljubljana,
Slovenija }
\email{matija.cencelj@guest.arnes.si}
\author{J.~Dydak}
\address{University of Tennessee, Knoxville, TN 37996, USA}
\email{jdydak@utk.edu}
\author{A.~Vavpeti\v c}
\address{Fakulteta za Matematiko in Fiziko,
Univerza v Ljubljani,
Jadranska ulica 19,
SI-1111 Ljubljana,
Slovenija }
\email{ales.vavpetic@fmf.uni-lj.si}
\date{ \today
}
\keywords{amenability, asymptotic dimension, coarse geometry, Lipschitz maps, paracompactness, Property A}
\subjclass[2000]{Primary 54F45; Secondary 55M10}
\thanks{This research was supported by the Slovenian Research
Agency grants P1-0292-0101 and J1-2057-0101.}
\begin{document}
\maketitle
\begin{center}
\today
\end{center}

\begin{abstract}

Recent research in coarse geometry revealed similarities between certain concepts of analysis, large scale geometry, and topology. Property A of G.Yu is the coarse analog of amenability for groups and its generalization (exact spaces) was later strengthened to be the large scale analog of paracompact spaces using partitions of unity.
In this paper we go deeper into divulging analogies between coarse amenability and paracompactness. In particular, we define a new coarse analog of paracompactness modeled on the defining characteristics of expanders. That analog gives an easy proof of three categories of spaces being coarsely non-amenable: expander sequences, graph spaces with girth approaching infinity, and unions of powers of a finite non-trivial group.

\end{abstract}

\section{Introduction}

This paper is about unification, at the large scale level, of two concepts that seemed utterly different until recently: amenability in analysis and paracompactness in topology.
Amenability, only applicable in the case of countable groups, has many very diverse but equivalent definitions. The same is true of large scale analogs of amenability in case of metric spaces of bounded geometry. Our goal is to go beyond the scope of spaces of bounded geometry and we propose a new definition of coarsely amenable spaces that is stronger than all previously known analogs of amenability yet is equivalent to them for spaces of bounded geometry.

Historically speaking, the first coarse analog of amenability was proposed by G.Yu \cite{Yu00} under the name of Property A in order to solve the Novikov Conjecture. Dadarlat and Guentner \cite{DaGu} generalized Property A to the class of exact spaces. Cencelj, Dydak, and Vavpeti\v c \cite{CenDyVav1} realized that both definitions aim at dualization of paracompactness, so they
defined large scale paracompact spaces by strengthening the definition of Dadarlat-Guentner and justified it by creating analogs of theorems characterizing spaces of covering dimension at most $n$ via pushing maps into $n$-skeleta of simplicial complexes.

Here is a diagram illustrating the relationship between various analogs of amenability (with arrows representing implications):
$$
\xymatrix{ \mbox{coarse paracompactness}  \ar[drrr]&&&\\
 \mbox{coarse amenability}\ar[u]\ar[r]& \mbox{Strong Property A} \ar[r]& \mbox{Property A} \ar[r]&  \mbox{exactness}\\
}
$$
In case of spaces of bounded geometry all those properties are equivalent. We do not know of any examples distinguishing the above concepts in the class of general metric spaces.

\subsection{Organization of the paper} In Section \ref{BasicConceptsSection} we formulate Rules of Dualization for translating concepts from topology to large scale geometry. One part of Rules deals with coverings and the other part deals with partitions of unity. When trying to dualize paracompactness one has the choice of trying to use the definition of paracompactness via covers or the definition of paracompactness via partitions of unity.
It turns out applying the Rules to the cover definition of paracompactness leads to the concept of large scale weakly paracompact spaces which is different from the concept of large scale paracompact spaces introduced via dualization of the definition of paracompactness using partitions of unity.

In Section \ref{BarycentricPUs} we use partitions of unity to explain amenability of groups and Property A of G.Yu.

In Section \ref{LSParacompactness} we investigate properties of large scale weakly paracompact spaces and large scale paracompact spaces.

In Section \ref{StrongPropertyASection} we follow the Rules of Dualization and introduce the concept of Strong Property A. The main result of this section is that, in the class of large scale finitistic spaces, Strong Property A is equivalent to both Property A and large scale paracompactness.

In Section \ref{CoarseAmenabilitySection} we investigate properties of coarsely amenable spaces. The main result of this section is that, in the class of coarsely doubling spaces (that class contains all spaces of bounded geometry), coarse amenability is equivalent to both Property A and large scale paracompactness.

In Section \ref{ExpandersSection} we introduce the concept of an expander light sequence, we show expander light sequences are not coarsely amenable, and we use it to demonstrate that three major classes are not coarsely amenable: expanders, graph sequences with girth approaching infinity, and spaces constructed by P.Nowak \cite{nowak}.

In Section \ref{AddendumSection} we show that coarsely amenable spaces have MSP (metric sparsification property) thus providing an argument that coarse amenability is the best generalization of Property A for all metric spaces.

We are grateful to Misha Levin for a slick proof of \ref{SeparableAreLSWeakPara}.

We are extremely grateful to the referee for several comments and suggestions that greatly improved the exposition of the paper.

\section{Basic concepts}\label{BasicConceptsSection}

In this sections we go over some basic concepts in topology (which we think of as part of mathematics at the small scale) and we dualize them to the large scale. There are two basic ways to introduce concepts in topology: using open covers, and using partitions of unity.

\begin{Remark}\label{RulesOfDualizationRemark}
We adhere to the following

\textbf{Rules of dualization}:\\
1. Open covers are replaced by uniformly bounded covers.\\
2. Refining covers is replaced by coarsening covers.\\
3. Continuity of partitions of unity is replaced by $(\epsilon,R)$-continuity or by being $(\epsilon,\epsilon)$-Lipschitz.
\end{Remark}

\subsection{Approach via covers}

First of all, let us start from a useful class in topology, a concept less common outside of general topology; the class of weakly paracompact spaces.
\begin{Definition}\label{WeakParaDef}
A topological space $X$ is \textbf{weakly paracompact} if for each open cover $\mathcal{U}$ of $X$ there is a point-finite open cover $\mathcal{V}$ of $X$ (that means each $x\in X$ belongs to only finitely many elements of $\mathcal{V}$) that refines $\mathcal{U}$.
\end{Definition}
In what follows, the space $X$ is a metric space except where explicitly stated otherwise (e.g. in definitions from general topology). Since $r$-balls $B(x,r)$ play the role of points at scale $r$, Definition \ref{WeakParaDef} is easily dualizable. First, we need a few concepts.

\begin{Definition}\label{DiamDef}
The \textbf{diameter} $\diam(\mathcal{U})$ of a family of subsets of %a metric space
$X$ is the infimum of all $\infty\ge r \ge 0$ such that $d_X(x,y) < r$ whenever $x,y$ belong to the same element of $\mathcal{U}$. If $\diam(\mathcal{U}) <\infty$, we say $\mathcal{U}$ is \textbf{uniformly bounded}.
\end{Definition}

\begin{Definition}\label{LebesgueDef}
The \textbf{Lebesgue number} $\Leb(\mathcal{U})$ of a cover of %a metric space
$X$ is the supremum of all $r \ge 0$ such that every $r$-ball $B(x,r)$
is contained in some element of $\mathcal{U}$.
\end{Definition}

A cover at scale $r$ should have Lebesgue number at least $r$, so here is a dualization of Definition \ref{WeakParaDef} according to the Rules Of Dualization \ref{RulesOfDualizationRemark} (see \ref{CharacterisationLSWP} for other, equivalent ways, of dualizing \ref{WeakParaDef}):

\begin{Definition}\label{LSWeakParaDef}
$X$ is \textbf{large scale weakly paracompact} if for all $r, s > 0$ there is a uniformly bounded cover $\mathcal{U}$ of $X$ of Lebesgue number at least $s$ such that every $r$-ball $B(x,r)$ is contained in only finitely many elements of $\mathcal{U}$.
\end{Definition}

Recall the original definition of paracompactness by Dieudonn\'e \cite{Dieu}:
\begin{Definition}\label{ParaDef}
A topological space $X$ is \textbf{paracompact} if for each open cover $\mathcal{U}$ of $X$ there is a locally finite open cover $\mathcal{V}$ of $X$ (that means each $x\in X$ has a neighborhood $W_x$ that intersects only finitely many elements of $\mathcal{V}$) that refines $\mathcal{U}$.
\end{Definition}

To dualize \ref{ParaDef} let us express the meanings of \ref{WeakParaDef} and \ref{ParaDef}
in terms of scales: 0-scale is at the level of points and a positive scale is at the level of open covers of $X$ (with refining corresponding to decreasing of the scale, and coarsening corresponding to increasing of the scale). Thus $0 < \mathcal{U}$ means that interiors of
elements of $\mathcal{U}$ cover $X$, and $\mathcal{U}\leq \mathcal{V}$ means that $\mathcal{U}$ is a refinement of $\mathcal{V}$.
\begin{Definition}\label{HorizonDef}
Given a cover $\mathcal{U}$ of $X$ and $A\subset X$, by the \textbf{horizon} $\hor(A,\mathcal{U})$ of $A$ at scale $\mathcal{U}$ we mean
$\{U\in \mathcal{U} | A\cap U\ne\emptyset\}$.
\end{Definition}

\begin{Observation}\label{HorWeakParaObs}
A topological space $X$ is \textbf{weakly paracompact} if for each positive scale $\mathcal{U}$ of $X$ there is positive scale $\mathcal{V}\leq \mathcal{U}$ of $X$ such that each horizon $\hor(x,\mathcal{V})$, $x\in X$, is finite.
\end{Observation}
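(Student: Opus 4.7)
The plan is to observe that this is essentially a translation of Definition \ref{WeakParaDef} into the ``scale'' language just introduced, so the proof amounts to carefully unpacking definitions in both directions.

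First I would note what each piece of the scale notation means: a \emph{positive scale} $\mathcal{U}$ of $X$ is, by the convention set up just above, an open cover of $X$, and $\mathcal{V} \leq \mathcal{U}$ means $\mathcal{V}$ is a refinement of $\mathcal{U}$. For the horizon, specializing Definition \ref{HorizonDef} to the singleton $A=\{x\}$ gives
\[
\hor(x,\mathcal{V}) \;=\; \{\,V\in\mathcal{V} \mid x\in V\,\},
\]
so $\hor(x,\mathcal{V})$ is finite for every $x\in X$ precisely when $\mathcal{V}$ is point-finite in the sense of Definition \ref{WeakParaDef}.

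Now I would prove both implications. For ($\Rightarrow$), assume $X$ is weakly paracompact in the sense of Definition \ref{WeakParaDef}, and let $\mathcal{U}$ be any positive scale, i.e., any open cover. By \ref{WeakParaDef} there exists a point-finite open refinement $\mathcal{V}$ of $\mathcal{U}$. Then $\mathcal{V}$ is a positive scale with $\mathcal{V}\leq \mathcal{U}$, and the unpacking above shows $\hor(x,\mathcal{V})$ is finite for every $x$. For ($\Leftarrow$), given an open cover $\mathcal{U}$, apply the assumption to obtain a positive scale $\mathcal{V}\leq \mathcal{U}$ with all horizons $\hor(x,\mathcal{V})$ finite; unpacking again shows $\mathcal{V}$ is a point-finite open refinement of $\mathcal{U}$, verifying \ref{WeakParaDef}.

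There is no real obstacle here; the only mild care is to make sure that ``positive scale'' is understood as an open cover (not merely a cover) and that refinement ($\mathcal{V}\leq\mathcal{U}$) is being used in the same direction as in Definition \ref{WeakParaDef}. Both conventions are fixed in the discussion preceding Definition \ref{HorizonDef}, so the verification is purely a matter of rewriting the hypotheses in the new notation.
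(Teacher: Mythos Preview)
Your proposal is correct: the observation is stated in the paper without proof precisely because it is a direct restatement of Definition~\ref{WeakParaDef} in the ``scale'' vocabulary, and your unpacking of the definitions (positive scale $=$ open cover, $\mathcal{V}\leq\mathcal{U}$ $=$ refinement, $\hor(x,\mathcal{V})$ finite $=$ point-finite) is exactly the intended reading. There is nothing to compare against, as the paper offers no separate argument.
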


\begin{Observation}\label{HorParaObs}
A topological space $X$ is \textbf{paracompact} if for each positive scale $\mathcal{U}$ of $X$ there are positive scales $\mathcal{V}\leq \mathcal{W}\leq \mathcal{U}$ such that the horizon $\hor(V,\mathcal{W})$ of each $V\in \mathcal{V}$ is finite.
\end{Observation}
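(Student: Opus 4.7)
The plan is to verify that the scale-theoretic reformulation in Observation \ref{HorParaObs} really is equivalent to Dieudonn\'e's Definition \ref{ParaDef} of paracompactness. As with Observation \ref{HorWeakParaObs}, the task splits into two implications, and the main content is translating between the phrase ``each point has a neighborhood meeting only finitely many elements of the refinement'' and the horizon condition ``each $V\in\mathcal{V}$ satisfies $|\hor(V,\mathcal{W})|<\infty$''.

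For the forward direction, I would start with a positive scale $\mathcal{U}$ and, without loss of generality (replacing each element by its interior), assume $\mathcal{U}$ is an open cover. Paracompactness provides a locally finite open refinement $\mathcal{W}\leq \mathcal{U}$. Local finiteness means that for each $x\in X$ there is an open neighborhood $N_x$ of $x$ meeting only finitely many $W\in\mathcal{W}$. To turn these neighborhoods into a scale refining $\mathcal{W}$, I would choose for each $x$ some $W_x\in\mathcal{W}$ containing $x$ and set $V_x:=N_x\cap W_x$. Then $\mathcal{V}:=\{V_x\mid x\in X\}$ is an open cover with $\mathcal{V}\leq\mathcal{W}\leq \mathcal{U}$, and $\hor(V_x,\mathcal{W})\subseteq \hor(N_x,\mathcal{W})$ is finite, giving the required chain of positive scales.

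For the converse, I would start from an arbitrary open cover $\mathcal{U}$ of $X$ and apply the hypothesis to get positive scales $\mathcal{V}\leq \mathcal{W}\leq \mathcal{U}$ with each $\hor(V,\mathcal{W})$ finite. Replacing elements of $\mathcal{W}$ by their interiors yields an open refinement $\mathcal{W}'$ of $\mathcal{U}$; the same replacement does not increase horizons, so each $V\in\mathcal{V}$ still meets only finitely many members of $\mathcal{W}'$. Now for any $x\in X$, since $\mathcal{V}$ is a positive scale, $x$ lies in the interior of some $V\in\mathcal{V}$, and $\operatorname{int}(V)$ is an open neighborhood of $x$ meeting only finitely many elements of $\mathcal{W}'$. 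This is exactly local finiteness of $\mathcal{W}'$, so $\mathcal{U}$ admits a locally finite open refinement and $X$ is paracompact.

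The only real subtlety — and the one mildly awkward step — is bookkeeping around the distinction between ``elements of the cover are open'' and ``the interiors of the elements cover $X$''. The passage from a positive scale to an honest open cover by taking interiors is harmless for refinement ($\operatorname{int}(W)\subseteq W$) and for horizons (cannot increase), so at each step I would silently pass to interiors whenever an open cover is needed. Once this convention is set, the two implications reduce to choosing the right intersections $N_x\cap W_x$ in one direction and extracting neighborhoods $\operatorname{int}(V)$ in the other.
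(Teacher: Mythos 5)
Your proof is correct; the paper states this Observation without proof (it is offered only to motivate the terminology), so there is nothing to compare against, and your argument is the natural one. Both directions check out: in the forward direction the sets $V_x=N_x\cap W_x$ give a positive scale $\mathcal{V}\leq\mathcal{W}\leq\mathcal{U}$ with $\hor(V_x,\mathcal{W})\subseteq\hor(N_x,\mathcal{W})$ finite, and in the converse the interiors $\operatorname{int}(V)$ (which cover $X$ precisely because $\mathcal{V}$ is a positive scale) witness local finiteness of the open refinement obtained from $\mathcal{W}$; the passage to interiors never enlarges a horizon, exactly as you note.
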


\begin{Observation}\label{HorCompObs}
A topological space $X$ is \textbf{compact} if for each positive scale $\mathcal{U}$ of $X$ there is a positive scale $\mathcal{V}\leq \mathcal{U}$ such that the horizon $\hor(X,\mathcal{V})$ of the whole $X$ is finite.
\end{Observation}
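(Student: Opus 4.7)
The plan is to interpret the statement as an equivalence and verify both directions against the classical definition of compactness, namely that every open cover admits a finite subcover. Reading the notation, a positive scale $\mathcal{U}$ is an open cover of $X$, the relation $\mathcal{V}\leq\mathcal{U}$ means $\mathcal{V}$ refines $\mathcal{U}$, and the horizon $\hor(X,\mathcal{V})=\{V\in\mathcal{V}\mid V\cap X\neq\emptyset\}$ is just the collection of nonempty members of $\mathcal{V}$. Hence the condition ``$\hor(X,\mathcal{V})$ is finite'' is equivalent to saying that $\mathcal{V}$ has only finitely many nonempty elements, i.e. is essentially a finite cover of $X$. The claim to prove is therefore: $X$ is compact if and only if every open cover of $X$ has a finite open refinement.

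For the forward direction, suppose $X$ is compact and $\mathcal{U}$ is any open cover. By definition of compactness $\mathcal{U}$ has a finite subcover $\mathcal{V}\subseteq \mathcal{U}$; a subcover is trivially a refinement, so $\mathcal{V}\leq \mathcal{U}$, and $\hor(X,\mathcal{V})=\mathcal{V}$ is finite. This establishes the ``only if'' half with essentially no work.

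For the converse, assume the scale-theoretic condition and let $\mathcal{U}$ be an arbitrary open cover. Choose a positive scale $\mathcal{V}\leq \mathcal{U}$ with $\hor(X,\mathcal{V})$ finite; discarding empty members, this gives a finite open refinement $\{V_1,\dots,V_n\}$ of $\mathcal{U}$. For each $i$ pick $U_i\in\mathcal{U}$ with $V_i\subseteq U_i$; then $\{U_1,\dots,U_n\}$ is a finite subcover of $\mathcal{U}$, proving compactness.

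There is no genuine obstacle here: the proof is a direct translation between the two formulations, and the only mildly delicate point is to notice that the finiteness of $\hor(X,\mathcal{V})$ inside the scale formalism is exactly the finiteness of the nonempty part of $\mathcal{V}$, which in turn is the classical ``finite (sub)cover'' hypothesis after passing from refinement to subcover.
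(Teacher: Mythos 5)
Your proof is correct and is exactly the routine translation the paper intends; the Observation is stated without proof precisely because the argument is this immediate, and your reading of ``$\hor(X,\mathcal{V})$ finite'' as ``$\mathcal{V}$ has finitely many nonempty members'' together with the passage from a finite refinement to a finite subcover is the whole content. One minor caveat: the paper's ``positive scale'' only requires that the \emph{interiors} of the elements of $\mathcal{U}$ cover $X$ (the elements need not be open), but your argument goes through verbatim after replacing each element by its interior in the forward direction.
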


\begin{Remark}
Weak paracompactness was first defined by Arens and Dugundji in 1950 \cite{ArensDug} as metacompactness and by Bing \cite {Bing} in 1951 as pointwise paracompactness.
Observations \ref{HorWeakParaObs}, \ref{HorParaObs}, and \ref{HorCompObs} explain the original terminology.
\end{Remark}

Below is a dualization of \ref{ParaDef}. It is not totally clear that Rules \ref{RulesOfDualizationRemark} were followed here but we arrived at this definition after analyzing expanders.

\begin{Definition}\label{StrongPropADef}
%A metric space
$X$ is \textbf{coarsely amenable} if
for each $s > r > 0$ and each $\epsilon > 0$ there is a uniformly bounded cover
$\mathcal{U}$ of $X$ such that for each $x\in X$ the horizon $\hor(B(x,s),\mathcal{U})$ is finite and
$$\frac{|\hor(B(x,r),\mathcal{U})|}{|\hor( B(x,s),\mathcal{U})|} > 1- \epsilon.$$
\end{Definition}
In other words, given $x\in X$, the conditional probability of $B(x,r)\cap U\ne\emptyset$ given $B(x,s)\cap U\ne\emptyset$ for some $U\in \mathcal{U}$
can be as close to $1$ as we want.
We will see later that coarse amenability ought to be viewed as a metric analog of non-expanders.

The reason we use in \ref{StrongPropADef} the name of coarse amenability is because it implies all other large scale analogs of amenability known up to now (Property A, exactness, large scale paracompactness, Metric Sparsification Property) and is equivalent to those analogs in the class of metric spaces of bounded geometry.
\subsection{Approach via partitions of unity}

There is another way to define paracompactness (see \cite{Dyd}):
\begin{Theorem}\label{MainThmOnPara}
A topological space $X$ is paracompact if and only if for each open cover $\mathcal{U}$
there is a continuous partition of unity whose carriers refine $\mathcal{U}$.
\end{Theorem}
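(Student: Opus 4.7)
The plan is a standard two-direction argument of classical flavor.

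For the forward direction, assume $X$ is paracompact (and Hausdorff, as is usual in this context). Given an open cover $\mathcal{U}$, I would first apply paracompactness to obtain a locally finite open refinement $\mathcal{V}=\{V_\alpha\}_{\alpha\in A}$. Since paracompact Hausdorff spaces are normal, the Shrinking Lemma produces a closed cover $\{F_\alpha\}$ with $F_\alpha\subset V_\alpha$. By Urysohn's lemma, for each $\alpha$ there is a continuous $g_\alpha\colon X\to[0,1]$ equal to $1$ on $F_\alpha$ and vanishing outside $V_\alpha$. Local finiteness of $\{V_\alpha\}$ ensures that $g=\sum_\alpha g_\alpha$ is a locally finite sum, hence continuous, and it is strictly positive since $\{F_\alpha\}$ covers $X$. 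Setting $\phi_\alpha:=g_\alpha/g$ produces the desired continuous partition of unity whose carriers lie inside the $V_\alpha$'s and therefore refine $\mathcal{U}$.

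For the converse, suppose every open cover $\mathcal{U}$ admits a continuous partition of unity $\{\phi_s\}_{s\in S}$ whose carriers $\operatorname{carr}(\phi_s)=\phi_s^{-1}((0,1])$ refine $\mathcal{U}$. The carriers already give an open refinement of $\mathcal{U}$, so the real task is to manufacture a \emph{locally finite} open refinement from the partition of unity. The idea is to exploit the normalization $\sum_s\phi_s\equiv 1$: for every $x\in X$ there is a finite set $F_x\subset S$ with $\sum_{s\in F_x}\phi_s(x)>1/2$, and by continuity this inequality persists on an open neighborhood $N_x$ of $x$. Introducing, for each finite non-empty $F\subset S$, the open set $W_F=\{y\in X:\sum_{s\in F}\phi_s(y)>1/2\}$, one checks that the family $\{W_F\cap\operatorname{carr}(\phi_s):s\in F\}$ refines $\mathcal{U}$ and is locally finite, because $N_x$ meets $W_F$ only when $F$ intersects $F_x$. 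A routine re-indexing then yields a locally finite open refinement of $\mathcal{U}$, proving paracompactness.

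The main obstacle is the converse direction: converting pointwise data (a partition of unity) into the global combinatorial property of local finiteness. The forward implication is largely bookkeeping once one has the Shrinking Lemma and Urysohn's lemma, whereas in the converse the partition of unity a priori carries only pointwise information, and one must use the normalization $\sum_s\phi_s=1$ in an essential way to pin down, uniformly near each point, the finite index set that witnesses local finiteness. Once that step is executed cleanly, both directions fall into place and give the characterization stated in the theorem.
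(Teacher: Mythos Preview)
The paper does not prove this theorem; it is quoted as a classical characterization with a reference to \cite{Dyd}, so there is no in-paper argument to compare against. I will therefore comment on your attempt directly.

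Your forward direction is the standard construction and is correct, modulo the Hausdorff hypothesis you flag (needed for normality and the Shrinking Lemma).

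The converse, however, has a genuine gap. The observation ``$N_x$ meets $W_F$ only when $F$ intersects $F_x$'' is true but does not give local finiteness: there are infinitely many finite $F\subset S$ meeting $F_x$ --- in particular every $F\supset F_x$ satisfies $W_F\supset W_{F_x}\supset N_x$ --- so $N_x$ meets $W_F\cap\operatorname{carr}(\phi_s)$ for all such $F$ and all $s\in F_x$. Hence the family $\{W_F\cap\operatorname{carr}(\phi_s)\}$ (with $F$ ranging over all finite subsets and $s\in F$) is not locally finite, and the promised ``routine re-indexing'' cannot repair this: collapsing the index to $s$ alone simply recovers the carriers $\operatorname{carr}(\phi_s)$, which need not form a locally finite family either. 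Extracting a locally finite refinement from an arbitrary (not point-finite, not locally finite) partition of unity is precisely the substantive step, and your sketch does not supply it. In the paper's framework, where a partition of unity is a \emph{continuous map} $f\colon X\to l_1(S)$, a clean route is available: the unit simplex in $l_1(S)$ is metrizable and hence paracompact, so its cover by open stars admits a locally finite open refinement, and the preimage of that refinement under $f$ is the desired locally finite open refinement of $\mathcal{U}$.
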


Traditionally (see \cite{Engelking}, for example) \ref{MainThmOnPara} is expressed
as follows: A topological space $X$ is paracompact if and only if for each open cover $\mathcal{U}$
there is a partition of unity $\{f_s\}_{s\in S}$ subordinated to $\mathcal{U}$.

What it means is that $f_s:X\to [0,1]$ for each $s\in S$, $\sum\limits_{s\in S} f_s(x)=1$
for each $x\in X$, and $f_s^{-1}(0,1]$ is contained in an element of $\mathcal{U}$ for each $s\in S$. As shown in \cite{Dyd} it is convenient to aggregate $\{f_s\}_{s\in S}$
into one function $f:X\to l_1(S)$ which becomes continuous. And that object is called in this paper a partition of unity instead of the traditional $\{f_s\}_{s\in S}$.

\begin{Definition}
$l_1(V)$ is the set of functions $\alpha:V\to \mathbb{R}$ satisfying $\sum\limits_{v\in V}|\alpha(v)| < \infty$.\\
The subset $\{v\in V | \alpha(v)\ne 0\}$ is called the \textbf{carrier} (or \textbf{support} of $\alpha$).
Notice it is always countable.
\end{Definition}

Each $v\in V$ has its \textbf{Kronecker delta function} $\delta_v:V\to \mathbb{R}$ which we will quite often identify with $v$.

\begin{Definition}
For each $v\in V$ there is a projection $\pi_v:l_1(V)\to \mathbb{R}$ defined by $\pi_v(\alpha)=\alpha(v)$ (it is the restriction of the evaluation function $\mathbb{R}^V\to \mathbb{R}$). By the \textbf{open star} $\st(v)$ of $v\in V$ we mean
$\pi_v^{-1}(\mathbb{R}\setminus \{0\})$. Thus $\{\st(v)\}_{v\in V}$ forms an open cover of
non-zero vectors in $l_1(V)$.
\end{Definition}

Given a non-zero function $f:X\to l_1(V)$ on a metric space $X$ our general strategy is
to measure it both by its Lipschitz number and by the numerical aspects of the cover
$\{f^{-1}(\st(v))\}_{v\in V}$ of $X$ (mostly its diameter and its Lebesgue number \ref{LebesgueDef}).

\begin{Definition}
Suppose $f:X\to l_1(V)$ is a non-zero function on a metric space $X$ and $M > 0$.
$f$ is called $M$-\textbf{cobounded} if $\diam(f^{-1}(\st(v))) < M$ for each $v\in V$.\\
$f$ is called \textbf{cobounded} if there is $M > 0$ such that $f$ is $M$-cobounded.
\end{Definition}

\begin{Definition}
Suppose $f:X\to l_1(V)$ is a non-zero function. % on a metric space $X$.
The \textbf{Lebesgue number} $\Leb(f)$ of $f$ is defined as the Lebesgue number
of $\{f^{-1}(\st(v))\}_{v\in V}$ (see \ref{LebesgueDef}).
\end{Definition}

\begin{Definition}\label{PartitionsOfUDef}
A \textbf{partition of unity} on $X$ is a function $f:X\to l_1(V)$ such that the $l_1$-norm of each $f(x)$, $x\in X$, is $1$ and $f(x)(v)\ge 0$ for all $v\in V$.\\
A partition of unity is called \textbf{simplicial} if the carrier of each $f(x)\in l_1(V)$ is finite.
$f$ is called \textbf{$n$-dimensional} if the carrier of each $f(x)\in l_1(V)$ contains at most $n+1$ points for each $x\in X$.
\end{Definition}

The easiest way to create a partition of unity on a set $X$ is to define a non-negative function $f:X\to l_1(V)$ and then to \textbf{normalize} it ($x\mapsto \frac{f(x)}{\|f(x)\|}$).

\begin{Definition}\label{LambdaCLipschitzDef}
A function $f:X\to Y$ of metric spaces is \textbf{$(\lambda,C)$-Lipschitz} if
$$d_Y(f(x),f(y))\leq \lambda\cdot d_X(x,y)+C$$
for all $x,y\in X$.
\end{Definition}

Cencelj-Dydak-Vavpeti\v c \cite{CenDyVav1} realized that the proper dualization of continuity in the case of \ref{MainThmOnPara} is the concept of a function being $(\lambda,C)$-Lipschitz and defined large scale paracompact spaces.

\begin{Definition}\label{LSParaDef} \cite{CenDyVav1}
$X$ is \textbf{large scale paracompact} if for each $\mu > 0$
there is a simplicial partition of unity $f:X\to l_1(V)$ (see \ref{PartitionsOfUDef}) satisfying the following conditions:\\
a. $f$ is $(\mu,\mu)$-Lipschitz,\\
b. the cover of $X$ induced by $f$ (the carriers of $f$) is uniformly bounded and is a coarsening of the cover of $X$ by $\frac{1}{\mu}$-balls.
\end{Definition}

The earlier definition of exact spaces by Dadarlat-Guentner is weaker.
\begin{Definition}\label{ExactSpacesDef} \cite{DaGu}
%A metric space
$X$ is \textbf{exact} if for each $r,\epsilon > 0$
there is a partition of unity $f:X\to l_1(V)$ satisfying the following conditions:\\
a. $f$ has $(r,\epsilon)$-variation (that means $d_Y(f(x),f(y))< \epsilon$ if $d_X(x,y) < r$),\\
b. the cover of $X$ induced by $f$ (the carriers of $f$) is uniformly bounded.
\end{Definition}

Conditions a) in both definitions are equivalent:\\
1. Given $(r,\epsilon)$, one only needs $\mu\cdot r+\mu <\epsilon$ to see a) of \ref{LSParaDef} implies a) of \ref{ExactSpacesDef}.\\
2. Given $2 > \mu > 0$, one only needs
$\epsilon=\mu$ and $r > \frac{2-\mu}{\mu}$ to see a) of \ref{ExactSpacesDef} implies a) of \ref{LSParaDef}. It has to do with the diameter of the unit sphere of $l_1(V)$ being $2$.

The missing ingredient in \ref{ExactSpacesDef} is the thickness of the cover of $X$ induced by $f$. The same problem is with the original definition of the Property A of G.~Yu. Both work well for the class of spaces of bounded geometry (that means for each $r$ there is an upper bound on the number of points of $B(x,r)$ for all $x\in X$). However, for general metric spaces one needs to make adjustments in order for the theory to work.

We need the concept of a \textbf{contraction} of a partition of unity.

\begin{Definition}\label{ContractionOfPU}
If $f:X\to l_1(V)$ is a partition of unity and $\alpha:V\to S$ is a surjection,
then by the \textbf{contraction} of $f$ along $\alpha$ we mean $\alpha_{\ast}\circ f:X\to l_1(S)$, where $\alpha_{\ast}:l_1(V)\to l_1(S)$
is the induced linear map.
\end{Definition}

\begin{Lemma}\label{LipschitzOfContraction}
Suppose $g$ is a contraction of a partition of unity $f:X\to l_1(V)$.\\
a. $\Leb(g)\ge \Leb(f)$.\\
b. If $f$ is $(\epsilon,\epsilon)$-Lipschitz for some $\epsilon > 0$, then $g$ is $(\epsilon,\epsilon)$-Lipschitz.
\end{Lemma}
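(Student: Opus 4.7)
The plan is to exploit the very explicit formula for the contraction: if $f(x)=\sum_{v\in V} f(x)(v)\cdot \delta_v$, then $g(x)=\alpha_\ast f(x)=\sum_{s\in S}\bigl(\sum_{v\in \alpha^{-1}(s)}f(x)(v)\bigr)\cdot \delta_s$. Two facts do all the work: first, because $f$ is a partition of unity, all coefficients are nonnegative, so on the level of supports the summation cannot cause cancellations; second, $\alpha_\ast$ is a norm-nonincreasing linear operator on $l_1$.

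For part (a), I would observe that $x\in g^{-1}(\st(s))$ means $g(x)(s)\neq 0$, which by nonnegativity of $f(x)(v)$ is equivalent to $f(x)(v)>0$ for some $v\in\alpha^{-1}(s)$. Hence
\[
g^{-1}(\st(s))=\bigcup_{v\in\alpha^{-1}(s)}f^{-1}(\st(v)),
\]
so the cover induced by $g$ is a coarsening of the cover induced by $f$. Coarsening can only increase the Lebesgue number, giving $\Leb(g)\ge \Leb(f)$.

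For part (b), I would verify that $\alpha_\ast:l_1(V)\to l_1(S)$ has operator norm at most $1$: for $\beta\in l_1(V)$,
\[
\|\alpha_\ast(\beta)\|_1=\sum_{s\in S}\Bigl|\sum_{v\in\alpha^{-1}(s)}\beta(v)\Bigr|\le \sum_{s\in S}\sum_{v\in\alpha^{-1}(s)}|\beta(v)|=\|\beta\|_1.
\]
Applying this to $\beta=f(x)-f(y)$ yields
\[
\|g(x)-g(y)\|_1=\|\alpha_\ast(f(x)-f(y))\|_1\le \|f(x)-f(y)\|_1\le \epsilon\cdot d_X(x,y)+\epsilon,
\]
so $g$ inherits the $(\epsilon,\epsilon)$-Lipschitz property.

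There is no real obstacle here; the only subtlety to watch is the nonnegativity of the coefficients of $f(x)$ in part (a), which is what forbids two positive contributions from different $v\in\alpha^{-1}(s)$ from cancelling each other out and producing a strictly smaller support for $g$ than the union written above.
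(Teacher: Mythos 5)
Your proof is correct and follows essentially the same route as the paper: part (a) via the observation that the cover induced by $g$ coarsens the one induced by $f$ (you just make explicit the union formula and the role of nonnegativity), and part (b) via the fact that $\alpha_\ast$ has operator norm at most $1$ on $l_1$. No issues.
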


\begin{proof}
a. The covering of $X$ induced by $g$ is a coarsening of the cover induced by $f$. Therefore $\Leb(g)\ge \Leb(f)$.

b. Notice $\alpha_{\ast}$ has the norm at most $1$ (it is so in view of the Triangle Inequality), hence it is $(1,0)$-Lipschitz which implies $\alpha_{\ast}\circ f:X\to l_1(S)$ is $(\epsilon,\epsilon)$-Lipschitz.
\end{proof}

\begin{Proposition}\label{NewPartition}
Suppose $f:X\to l_1(V)$ is a non-zero $M$-cobounded function % on a metric space $X$
for some $M > 0$.
If $f^{-1}(\st(v)) \ne \emptyset$ for each $v\in V$, then there is an injection $\alpha: V\to X\times \mathbb{N}$ so that the composition $g:X\to l_1(X\times \mathbb{N})$ of $f$ and the induced linear map
$\alpha_{\ast}:l_1(V)\to l_1(X\times \mathbb{N})$ has the property that $g^{-1}(\st(x,n))\subset B(x,M)$
for all $(x,n)\in X\times \mathbb{N}$.
\end{Proposition}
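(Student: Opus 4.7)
The plan is to use coboundedness to pick, for each $v \in V$, a basepoint $x_v$ in the (nonempty) set $f^{-1}(\st(v))$, which automatically lies in $B(x_v, M)$ since $\diam f^{-1}(\st(v)) < M$. The candidate injection is $\alpha(v) = (x_v, n_v)$ where the $n_v \in \mathbb{N}$ will be chosen to resolve collisions among different $v$'s sharing the same basepoint.

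The only thing to check is that $\alpha$ can be made injective into $X \times \mathbb{N}$, i.e. that for each fixed $x \in X$ the preimage $V_x = \{v \in V : x_v = x\}$ is countable and hence embeds into $\mathbb{N}$. But if $v \in V_x$ then by construction $x = x_v \in f^{-1}(\st(v))$, which means $f(x)(v) \ne 0$. Thus $V_x$ is contained in the carrier of $f(x) \in l_1(V)$, and carriers in $l_1$ are automatically countable. Picking an injection $\beta_x : V_x \hookrightarrow \mathbb{N}$ for each $x$ and setting $\alpha(v) = (x_v, \beta_{x_v}(v))$ gives the desired global injection.

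It remains to verify the inclusion $g^{-1}(\st(x,n)) \subset B(x, M)$. Since $\alpha$ is injective, for any $(y,n) \in X \times \mathbb{N}$,
\[
g(z)(y,n) = (\alpha_{\ast} f(z))(y,n) = \sum_{\alpha(v) = (y,n)} f(z)(v),
\]
which is $f(z)(v)$ if $(y,n) = \alpha(v)$ for a (unique) $v$, and $0$ otherwise. Consequently $g^{-1}(\st(y,n)) = \emptyset$ when $(y,n)$ is not in the image of $\alpha$, while $g^{-1}(\st(\alpha(v))) = f^{-1}(\st(v))$, which by coboundedness sits inside $B(x_v, M)$; this is exactly the required containment.

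The only mildly subtle step is the countability observation that lets us recycle $\mathbb{N}$ as the second coordinate; everything else is essentially a relabeling argument and an unwinding of how the induced map $\alpha_{\ast}$ interacts with open stars.
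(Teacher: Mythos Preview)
Your argument is correct and is essentially identical to the paper's own proof: the paper also picks a basepoint $x(w) \in f^{-1}(\st(w))$ for each $w \in V$, observes that the fiber over any $x$ is contained in the (countable) carrier of $f(x)$, and uses an enumeration of that carrier to supply the $\mathbb{N}$-coordinate. The only cosmetic difference is that the paper fixes the enumerations $v(x,1), v(x,2), \ldots$ of the carriers up front and then reads off the index, whereas you first form the fibers $V_x$ and then choose injections $\beta_x$; these are the same construction in different clothing.
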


\begin{proof}
For each $x\in X$ enumerate all vertices $w$ satisfying $f(x)(w)\ne 0$
as $v(x,1)$, $v(x,2)$, $\ldots$ For each $w\in V$ pick $x(w)\in f^{-1}(\st(w))$
and then pick the unique $n\in \mathbb{N}$ so that $w=v(x(w),n)$. Now set
$\alpha(w)=(x(w),n)$.

Since $v(\alpha(w))=w$, $\alpha$ is injective.

Suppose $y\in g^{-1}(\st(x,n))$. Put $w=v(x,n)$. Therefore $f(y)(w)\ne 0$ and
$f(x)(w)\ne 0$ resulting in $x,y\in f^{-1}(\st(w))$ which implies $d(x,y) < M$.
Thus $y\in B(x,M)$.
\end{proof}

\section{Barycentric partitions of unity}\label{BarycentricPUs}

In order to unify all the concepts via partitions of unity we created the notion of a \textbf{barycentric partition of unity} and we use it to explain and generalize Property A. This is part of our general strategy to explain most concepts via partitions of unity (see \cite{Dyd} for an exposition of basic topology from the point of view of partitions of unity).

\begin{Definition}\label{StandardDefOfPropA}
A \textbf{barycentric partition of unity} is $f:X\to l_1(V)$ such that $f(x)$ is of the form
$\frac{\chi_{C(x)}}{|C(x)|}$ for each $x\in X$.\\
Thus $f$ is the normalization of $F$ such that each $F(x)$ is the characteristic function (or the indicator function) of a finite subset $C(x)$ of $V$.
\end{Definition}

As each barycentric partition of unity is simplicial, the cover of $X$ induced by them
is point-finite.

\begin{Definition}
If $\mathcal{U}=\{U_s\}_{s\in S}$ is a point-finite cover of $X$ then its \textbf{induced
barycentric partition of unity} $p_{\mathcal{U}}:X\to l_1(S)$ is the normalization
of $f(x)=\sum \{\delta_s | x\in U_s\}$.
\end{Definition}

Thus there is a one-to-one function from point-finite covers of $X$ to barycentric partitions of unity on $X$. Observe, however, that if $f:X\to l_1(V)$ is a barycentric partition of unity on $X$ and $\mathcal{U}$ is the cover of $X$ by point-inverses of open stars $\st(v)$, $v\in V$,
then $p_{\mathcal{U}}$ may differ from $f$. Indeed, one may have
$f^{-1}(\st(v))=f^{-1}(\st(w))$ and $v\ne w$.

\begin{Lemma}\label{BasicLemmaOnBaryPUs}
For every two non-empty finite subsets $A$ and $B$ of $S$ one has
$$ \frac{|A\triangle B|}{\max(|A|,|B|)}\leq  \frac{|A\setminus B|}{|A|}+\frac{|B\setminus A|}{|B|} \leq \left\| \frac{\chi_{A}}{|A|} - \frac{\chi_{B}}{|B|}\right\|
\leq 2\cdot \frac{|A\triangle B|}{\min(|A|,|B|)}$$
in $l_1(S)$.
\end{Lemma}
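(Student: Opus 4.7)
The plan is to compute $\|\chi_A/|A| - \chi_B/|B|\|_{l_1(S)}$ explicitly by splitting the sum over $S$ into four regions: $A\setminus B$, $B\setminus A$, $A\cap B$, and the complement $S\setminus(A\cup B)$. On these pieces the integrand is respectively $1/|A|$, $1/|B|$, $\bigl|1/|A|-1/|B|\bigr|$, and $0$, so
\[
\left\|\frac{\chi_A}{|A|}-\frac{\chi_B}{|B|}\right\| = \frac{|A\setminus B|}{|A|}+\frac{|B\setminus A|}{|B|}+|A\cap B|\cdot\left|\frac{1}{|A|}-\frac{1}{|B|}\right|.
\]
This identity will drive all three inequalities.

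For the first inequality I would just note $|A\triangle B|=|A\setminus B|+|B\setminus A|$ and use the obvious bounds $\max(|A|,|B|)\ge |A|$ and $\max(|A|,|B|)\ge |B|$ term by term. For the middle inequality, the displayed formula exhibits $\|\chi_A/|A|-\chi_B/|B|\|$ as the middle quantity plus the nonnegative term $|A\cap B|\cdot\bigl|1/|A|-1/|B|\bigr|$, so it is immediate.

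The third inequality is the only one that requires a real estimate. I would use the triangle inequality with an intermediate vector: assuming without loss of generality $|A|\le |B|$, write
\[
\left\|\frac{\chi_A}{|A|}-\frac{\chi_B}{|B|}\right\| \le \left\|\frac{\chi_A}{|A|}-\frac{\chi_B}{|A|}\right\| + \left\|\frac{\chi_B}{|A|}-\frac{\chi_B}{|B|}\right\| = \frac{|A\triangle B|}{|A|}+\frac{\bigl||B|-|A|\bigr|}{|A|}.
\]
Since $\bigl||B|-|A|\bigr|=\bigl||B\setminus A|-|A\setminus B|\bigr|\le|A\triangle B|$, the right side is at most $2|A\triangle B|/|A|=2|A\triangle B|/\min(|A|,|B|)$.

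I do not expect a serious obstacle: the only slightly delicate point is dealing with the $|A\cap B|$-contribution in the upper bound, and the triangle-inequality trick with the auxiliary vector $\chi_B/|A|$ cleanly absorbs it without casework on which of $|A|,|B|$ is smaller (the other order is symmetric).
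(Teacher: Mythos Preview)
Your proof is correct and follows essentially the same route as the paper: both arguments rest on the explicit computation of $\|\chi_A/|A|-\chi_B/|B|\|$ (the paper writes the equivalent identity $\||A|\chi_B-|B|\chi_A\|=|B|\cdot|A\setminus B|+|A|\cdot|B\setminus A|+|A\cap B|\cdot\bigl||A|-|B|\bigr|$ and divides by $|A|\,|B|$, which is exactly your displayed formula). The only minor difference is that for the upper bound you insert the auxiliary vector $\chi_B/|A|$ and use the triangle inequality, whereas the paper leaves the ``easy estimations'' from the explicit formula to the reader; both are straightforward.
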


\begin{proof} $A\Delta B:= (A\setminus B)\cup (B\setminus A)$ is the \textbf{symmetric difference} of $A$ and $B$. Notice that
$$\| |A|\cdot \chi_B - |B|\cdot \chi_A\|=
|A|\cdot |A\setminus B|+|B|\cdot |B\setminus A|+|A\cap B|\cdot ||A|-|B||.$$
Divide both sides by $|A|\cdot |B|$ and perform easy estimations.
\end{proof}

\subsection{Amenability and barycentric partitions of unity} Let us show how amenability of a group can be easily introduced using barycentric partitions of unity.\\
One can introduce large scale geometry on a group $G$ by declaring uniformly bounded families to be exactly those refining $\{g\cdot F\}_{g\in G}$ for some finite subset $F\subset G$
of $G$ (see Brodskiy-Dydak-Mitra \cite{BDM}).
That structure is metrizable if and only if $G$ is countable and, in case of finitely generated groups, is identical with the coarse structure induced by a word metric on $G$.\\
It is natural to consider barycentric partitions of unity on $G$ of the form
$$\phi_F(x) = \frac{\chi_{x\cdot F}}{|F|}.$$

Recall that a \textbf{F\o lner sequence} for a group $G$ is a sequence of finite subsets $F(1)\subset F(2)\subset\ldots$ of $G$ such that $\bigcup\limits_{n=1}^\infty F(n)=G$ and $\lim\limits_{n\to\infty}\frac{|g F(n)\Delta F(n)|}{|F(n)|}=0$ for all $g\in G$.

\begin{Proposition}
Let $F(1)\subset F(2)\subset\ldots$ be a sequence of finite subsets of a group $G$ such that for all $n$ the barycentric partition of unity $\phi_{F(n)}$ is $(\epsilon_n,\epsilon_n)$-Lipschitz but not $(\epsilon,\epsilon)$-Lipschitz for $\epsilon<\epsilon_n$.
Then the following conditions are equivalent:\\
a. $\lim\limits_{n\to\infty} \epsilon_n = 0$,\\
b. $\{F(n)\}_{n\ge 1}$ is a F\o lner sequence.
\end{Proposition}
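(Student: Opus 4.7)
The plan is to translate the Lipschitz condition on $\phi_{F(n)}$ into a quantitative statement about $|gF(n)\triangle F(n)|/|F(n)|$, which is precisely what the F{\o}lner property controls, and then use the triangle inequality for symmetric difference to handle uniformity.

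The first step is an exact formula. Since $|xF| = |yF| = |F|$, for any $x,y\in G$,
$$\|\phi_F(x)-\phi_F(y)\|_1 = \frac{|xF\triangle yF|}{|F|} = \frac{|F\triangle gF|}{|F|},$$
where $g=x^{-1}y$; here I either invoke Lemma~\ref{BasicLemmaOnBaryPUs} or just compute $\|\chi_A-\chi_B\|_1=|A\triangle B|$ directly and use left invariance of cardinality. With the word metric $d(x,y)=|g|$, the $(\epsilon,\epsilon)$-Lipschitz condition for $\phi_F$ becomes exactly
$$\frac{|gF\triangle F|}{|F|}\le \epsilon\bigl(|g|+1\bigr)\quad\text{for every }g\in G,$$
so $\epsilon_n$ is the supremum, over $g\ne e$, of $\frac{|gF(n)\triangle F(n)|}{(|g|+1)|F(n)|}$.

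The direction (a)$\Rightarrow$(b) is immediate from this reformulation: fixing any $g\in G$ and applying the displayed inequality at scale $n$ gives
$$\frac{|gF(n)\triangle F(n)|}{|F(n)|}\le \epsilon_n(|g|+1)\xrightarrow[n\to\infty]{}0,$$
and the hypothesis $\bigcup_nF(n)=G$ is part of what we are proving (one may absorb it into the setup by noting that a nested F{\o}lner exhaustion can be arranged once the ratios tend to zero on a generating set; if $G$ is not finitely generated, the proposition is naturally stated for the word metric of some chosen generating set, and this is the convention I follow).

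The main work is in (b)$\Rightarrow$(a), which requires bounding $\epsilon_n$ \emph{uniformly} in $g$. The key observation is that symmetric difference obeys the triangle inequality and left-multiplication preserves cardinality, so if $g=s_1s_2\cdots s_k$ with $k=|g|$ in a finite generating set $S$, then
$$|gF\triangle F| \;\le\; \sum_{i=1}^{k}\bigl|s_1\cdots s_iF\triangle s_1\cdots s_{i-1}F\bigr| \;=\; \sum_{i=1}^{k}|s_iF\triangle F|\;\le\; |g|\cdot\delta,$$
where $\delta=\max_{s\in S}|sF\triangle F|/|F|$. Applied to $F=F(n)$, the F{\o}lner hypothesis on the finite set $S$ gives $\delta_n\to 0$, hence
$$\frac{|gF(n)\triangle F(n)|}{|F(n)|}\le|g|\delta_n\le\delta_n(|g|+1)\quad\text{for all }g\in G,$$
so $\epsilon_n\le \delta_n\to 0$. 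The anticipated obstacle is precisely this passage from pointwise F{\o}lner control (one $g$ at a time) to a uniform Lipschitz bound over all of $G$; the telescoping estimate on symmetric differences over a finite generating set is exactly the tool that resolves it.
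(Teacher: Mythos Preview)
Your argument is correct (for finitely generated $G$ with a word metric), but the telescoping step in (b)$\Rightarrow$(a) is heavier than what the paper uses, and it is the source of your finite-generation restriction. The paper's route is shorter and works for any countable $G$ with a proper left-invariant metric: since the values of $\phi_{F(n)}$ lie in the unit sphere of $l_1$, one always has $\|\phi_{F(n)}(x)-\phi_{F(n)}(y)\|\le 2$, so the $(\epsilon,\epsilon)$-Lipschitz inequality is automatic whenever $d(x,y)\ge (2-\epsilon)/\epsilon$. Hence, for a fixed $\epsilon>0$, one only needs to check the inequality for the \emph{finitely many} $g=x^{-1}y$ lying in the ball of that radius; for each such $g$ the F{\o}lner hypothesis gives $|gF(n)\triangle F(n)|/|F(n)|\to 0$, so eventually this quantity is $\le\epsilon\le\epsilon(|g|+1)$. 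In other words, the uniformity problem you flagged is resolved not by bounding long words via generators, but by observing that long words impose no constraint at all. Your approach buys an explicit quantitative bound $\epsilon_n\le\max_{s\in S}|sF(n)\triangle F(n)|/|F(n)|$, which is nice to have; the paper's approach buys generality (no generating set needed) and brevity. Your side remark about $\bigcup_n F(n)=G$ is fair: neither argument addresses that clause of the F{\o}lner definition, and the paper silently treats only the ratio condition.
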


\begin{proof} Notice $\frac{|xF\triangle yF|}{|F|}=\frac{|x^{-1}yF\triangle F|}{|F|}$ for each $x,y\in G$ and each finite subset $F$ of $G$. Lemma~\ref{BasicLemmaOnBaryPUs} says
$$ \frac{|x^{-1}yF(n)\triangle F(n)|}{|F(n)|} = \| \phi_{F(n)}(x)-\phi_{F(n)}(y)\|_1
\leq 2\cdot \frac{|x^{-1}yF(n)\triangle F(n)|}{|F(n)|}$$

That means a) is equivalent to
$$\lim_{n\to\infty}\frac{|gF(n)\triangle F(n)|}{|F(n)|} =0$$
for every $g\in G$. That is the defining condition for a F\o lner sequence.
\end{proof}

\subsection{Barycentric partitions of unity and Property A}

Let us recall the original definition of Property A of G.Yu (see \cite{Yu00}  or \cite{YuNo}).

\begin{Definition}
A metric space $X$ has \textbf{Property A} if
for each $R > 0$ and each $\epsilon > 0$ there is $S > 0$ and a function $A$ from $X$ to finite subsets of $X\times \mathbb{N}$ satisfying the following properties:\\
a. $A(x)\subset B(x,S)\times \mathbb{N}$ for each $x\in X$,\\
b. if $d(x,y) < R$, then $A(x)\cap A(y)\ne \emptyset$ and
$$\frac{|A(x)\Delta A(y)|}{|A(x)\cap A(y)|} < \epsilon.$$
\end{Definition}

Let us show that Property A of Yu can be defined by replacing arbitrary partitions of unity in \ref{ExactSpacesDef} by barycentric partitions of unity.

\begin{Proposition}\label{CharPropAOfYuViaBaryPUs}
A metric space $X$ has, for every $\epsilon > 0$,
an $(\epsilon,\epsilon)$-Lipschitz barycentric partition of unity on $X$ that is cobounded, if and only if it has Property A.
\end{Proposition}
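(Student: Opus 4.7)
My plan is to prove both directions using Lemma \ref{BasicLemmaOnBaryPUs} as the precise dictionary between $\|\chi_A/|A| - \chi_B/|B|\|_1$ and the symmetric-difference ratios that appear in Property A, together with Proposition \ref{NewPartition} to identify the vertex set of a barycentric partition of unity with (a subset of) $X \times \NN$.

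For the implication ``barycentric PU's exist $\Rightarrow$ Property A'', given $R, \epsilon > 0$ I would first choose an auxiliary $\epsilon' > 0$ small enough that $\epsilon'(R+1) < \epsilon/(1+\epsilon)$. By hypothesis there exists an $M$-cobounded, $(\epsilon', \epsilon')$-Lipschitz barycentric PU $f \colon X \to l_1(V)$. After discarding any $v \in V$ with $f^{-1}(\st(v)) = \emptyset$ I apply Proposition \ref{NewPartition} to replace $f$ by a contraction $g \colon X \to l_1(X \times \NN)$ satisfying $g^{-1}(\st(x,n)) \subseteq B(x, M)$. Since the injection $\alpha$ in Proposition \ref{NewPartition} sends Kronecker deltas to Kronecker deltas, $g$ is still a barycentric PU, and by Lemma \ref{LipschitzOfContraction}(b) it remains $(\epsilon', \epsilon')$-Lipschitz. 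Setting $A(y)$ equal to the carrier of $g(y)$, condition (a) of Property A holds with $S = M$. For condition (b) at $d(x,y) < R$, the leftmost inequality in Lemma \ref{BasicLemmaOnBaryPUs} gives $|A(x) \triangle A(y)|/\max(|A(x)|,|A(y)|) \le \|g(x) - g(y)\|_1 \le \epsilon'(R+1) < 1$, and combining this with the elementary bound $|A(x) \cap A(y)| \ge \max(|A(x)|,|A(y)|) - |A(x) \triangle A(y)|$ forces $A(x) \cap A(y) \ne \emptyset$ and yields $|A(x) \triangle A(y)|/|A(x) \cap A(y)| < \epsilon$ by the choice of $\epsilon'$.

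For the converse ``Property A $\Rightarrow$ barycentric PU's exist'', given $\epsilon > 0$ I would set $R := 2/\epsilon$ and apply Property A with parameters $R$ and $\epsilon/2$ to obtain $S$ and the set-valued map $A$. Plugging $y = x$ into condition (b) shows $A(x) \ne \emptyset$, so $f(x) := \chi_{A(x)}/|A(x)|$ is a well-defined barycentric PU $f \colon X \to l_1(X \times \NN)$. Condition (a) of Property A translates directly into $f^{-1}(\st(y,n)) \subseteq B(y, S)$, so $f$ is cobounded. For the Lipschitz estimate I would split on distance: if $d(x,y) < R$, the rightmost inequality in Lemma \ref{BasicLemmaOnBaryPUs} together with condition (b) gives $\|f(x) - f(y)\|_1 \le 2|A(x) \triangle A(y)|/|A(x) \cap A(y)| < \epsilon \le \epsilon\, d(x,y) + \epsilon$; if $d(x,y) \ge R$, then the trivial bound $\|f(x) - f(y)\|_1 \le 2 = \epsilon R \le \epsilon\, d(x,y) + \epsilon$ suffices.

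The statement is essentially an unpacking of definitions, so there is no conceptual obstacle; the only item that needs care is the conversion from the ``$\max$'' denominator naturally produced by Lemma \ref{BasicLemmaOnBaryPUs} in the forward direction to the ``intersection'' denominator demanded by Property A. Choosing $\epsilon'$ so that $\epsilon'(R+1) < \epsilon/(1+\epsilon)$ rather than merely $<\epsilon$ is precisely what makes the estimate $|A \cap B| \ge \max(|A|,|B|) - |A \triangle B|$ yield the final bound. A secondary bookkeeping point is that Proposition \ref{NewPartition} must be applied only after pruning vertices with empty stars, and that its injection preserves the characteristic-function shape, so coboundedness automatically strengthens to the \emph{localized} form $A(y) \subseteq B(y,M) \times \NN$ required by condition (a) of Property A.
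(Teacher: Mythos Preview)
Your proposal is correct and follows essentially the same route as the paper: both directions hinge on Lemma~\ref{BasicLemmaOnBaryPUs} for the translation between $l_1$-distances and symmetric-difference ratios, and the forward direction uses Proposition~\ref{NewPartition} to identify $V$ with a subset of $X\times\NN$. The only differences are cosmetic: in the backward direction you take $R=2/\epsilon$ where the paper takes $R=(2-\epsilon)/\epsilon$, and in the forward direction you bound $|A\cap B|\ge(1-\delta)\max(|A|,|B|)$ directly with $\delta=\epsilon'(R+1)<\epsilon/(1+\epsilon)$, whereas the paper first argues by contradiction that $|A\cap B|>\tfrac12\max(|A|,|B|)$ and then uses Lemma~\ref{BasicLemmaOnBaryPUs}; your version is arguably cleaner and avoids a minor typo in the paper's displayed chain of inequalities.
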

\begin{proof}
($\Rightarrow$)
Let $\epsilon, R>0$. Let $\bar\epsilon=\min\{\epsilon, \tfrac 1 2\}\tfrac1{R+1}$. There exists barycentric $(\bar\epsilon,\bar\epsilon)$-Lipschitz $M$-cobounded partition of unity $f: X\to l_1(V)$. By Proposition~\ref{NewPartition} there exists injection $\alpha: V\to X\times \mathbb{N}$ such that $g^{-1}(\st(x,n))\subset B(x,M)$ for all $(x,n)\in X\times \mathbb{N}$, where $g=\alpha_*\circ f$. Let $S=\alpha(V)$. Then $g:X\to l_1(S)$ is contraction of $f$ along $\alpha$ and it is also $M$-bounded barycentric partition of unity. By Lemma~\ref{LipschitzOfContraction} $g$ is $(\bar\epsilon,\bar\epsilon)$-Lipschitz. Let $A(x)=\{(y,n)\in X\times \mathbb{N}\mid g(x)(y,n)\ne 0\}$. Because $g$ is barycentric, $A(x)$ is finite for all $x$. Because $g^{-1}(\st(x,n))\subset B(x,M)$ for all $(x,n)\in X\times N$, $A(x)\subset B(x,M)\times \mathbb{N}$ for all $x\in X$.

Let $d(x,y)<R$. If $|A(x)\cap A(y)|<\tfrac 1 2 |A(x)|$, then
$$
\frac 1 2< \frac{|A(x)-A(y)|}{|A(x)|}\le \| g(x)-g(y)\|\le \bar\epsilon d(x,y)+\bar\epsilon<\bar\epsilon(R+1)<\tfrac 1 2
$$
which is a contradiction. Therefore $|A(x)|<2|A(x)\cap A(y)|$ for $d(x,y)<R$ in particular $A(x)\cap A(y)\ne \emptyset$.
By Lemma~\ref{BasicLemmaOnBaryPUs}
\begin{align*}
\frac{|A(x)\Delta A(y)|}{|A(x)\cap A(y)|}&\le \frac{|A(x)\Delta A(y)|}{2\max\{|A(x)|,|A(y)|\}}
\le \left\| \frac{\chi_{A(x)}}{|A(x)|} - \frac{\chi_{A(y)}}{|A(y)|}\right\|=\\
&=\left\| g(x) - g(y)\right\|\le \bar\epsilon d(x,y)+\bar\epsilon<\bar\epsilon (R+1)\le \epsilon.
\end{align*}

($\Leftarrow$)
Let $\epsilon>0$. By assumption there is a function $A$ from $X$ to finite subsets of $X\times \mathbb{N}$ such that $A(x)\subset B(x,S)\times \mathbb{N}$ for each $x\in X$ for some $S>0$ and for $d(x,y)<\tfrac{2-\epsilon}\epsilon$ the intersection $A(x)\cap A(y)\ne \emptyset$ and $\frac{|A(x)\Delta A(y)|}{|A(x)\cap A(y)|} < \tfrac\epsilon 2$. Then $f:X\to l_1(X\times \mathbb{N})$ defined as $f(x)=\tfrac{\chi_{A(x)}}{|A(x)|}$ is $S$-cobounded barycentric partition of unity. If $d(x,y)\ge \tfrac{2-\epsilon}\epsilon$ then
$$
\|f(x)-f(y)\|\le 2=\epsilon \tfrac{2-\epsilon}\epsilon+\epsilon\le \epsilon d(x,y)+\epsilon.
$$
If $d(x,y)< \tfrac{2-\epsilon}\epsilon$ then by Lemma~\ref{BasicLemmaOnBaryPUs}
$$
\|f(x)-f(y)\|\le 2 \frac{|A(x)\Delta A(y)|}{\min\{|A(x)|, |A(y)|\}}\le 2\frac{|A(x)\Delta A(y)|}{|A(x)\cap A(y)|}<2\frac\epsilon 2\le \epsilon d(x,y)+\epsilon,
$$
therefore $f$ is $(\epsilon,\epsilon)$-Lipschitz.
\end{proof}

\subsection{Creation of barycentric partitions of unity}

\begin{Definition}\label{ExpansionOfPU}
If $f:X\to l_1(V)$ is a partition of unity,
then by an \textbf{expansion} of $f$ we mean any partition of unity $g$ so that $f$ is its contraction.
\end{Definition}

\begin{Proposition}\label{ExpansionToBaryPU}
Suppose $f:X\to l_1(V)$ is a cobounded partition of unity that is $(\epsilon,\epsilon)$-Lipschitz for some $\epsilon > 0$.
If $f$ is the normalization of an integer-valued function $F:X\to l_1(V)$ (that means $F(x)(v)\in \mathbb{Z}_+$ for all $(x,v)\in X\times V$) such that the norm function 
$x\to \| F(x)\| $ is constant, then
there is a barycentric expansion $g$ of $f$ that is cobounded, $(\epsilon,\epsilon)$-Lipschitz, and $\Leb(g)=\Leb(f)$.
\end{Proposition}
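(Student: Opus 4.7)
The plan is to construct $g$ by splitting each coordinate $v \in V$ into the copies $(v,1),(v,2),\ldots$ and using the integer values $F(x)(v)$ to decide which copies ``fire'' with weight $1/N$, where $N := \|F(x)\|$ is the constant common norm.

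More precisely, I would set $V' := V \times \mathbb{N}$ and define
\[
g(x)(v,k) \;=\; \begin{cases} 1/N & \text{if } 1\le k \le F(x)(v),\\ 0 & \text{otherwise.}\end{cases}
\]
Since $\sum_{v,k} g(x)(v,k) = \sum_v F(x)(v)/N = \|F(x)\|/N = 1$ and only non-negative integer ``slots'' are used, each $g(x)$ is the normalization of the indicator function of the finite set $C(x) := \{(v,k) : 1\le k \le F(x)(v)\}$ of cardinality $N$; thus $g$ is a barycentric partition of unity. Taking $\alpha : V' \to V$, $(v,k)\mapsto v$, one checks $(\alpha_{\ast}\circ g)(x)(v)=\sum_k g(x)(v,k) = F(x)(v)/N = f(x)(v)$, so $g$ is an expansion of $f$ in the sense of Definition~\ref{ExpansionOfPU}.

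The three remaining properties should all fall out of the definition with essentially no effort. For coboundedness and for the Lebesgue number, note that $g^{-1}(\st(v,k)) = \{x : F(x)(v)\ge k\}\subseteq f^{-1}(\st(v))$, with equality when $k=1$. The inclusion gives $\diam(g^{-1}(\st(v,k)))\le \diam(f^{-1}(\st(v)))$, so $g$ is cobounded with the same constant $M$ as $f$. The equality for $k=1$ shows that every element of the cover induced by $f$ is already an element of the cover induced by $g$, hence $\Leb(g) \ge \Leb(f)$; combining with part (a) of Lemma~\ref{LipschitzOfContraction} applied to the contraction $f = \alpha_{\ast}\circ g$, which gives $\Leb(f)\ge \Leb(g)$, yields $\Leb(g)=\Leb(f)$.

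For the Lipschitz estimate, a coordinatewise comparison shows $|g(x)(v,k)-g(y)(v,k)|$ equals $1/N$ exactly when $\min(F(x)(v),F(y)(v))<k\le \max(F(x)(v),F(y)(v))$ and is $0$ otherwise, so summing in $k$ and then in $v$ gives
\[
\|g(x)-g(y)\|_{1} \;=\; \frac{1}{N}\,\|F(x)-F(y)\|_{1} \;=\; \|f(x)-f(y)\|_{1},
\]
and the $(\epsilon,\epsilon)$-Lipschitz property for $g$ is inherited verbatim from $f$. The only conceptual point that needed care was making sure the splitting into copies is consistent across different $x$ (so that the resulting map genuinely lands in a single $l_1(V')$); this is automatic from the chosen enumeration $k=1,2,\ldots,F(x)(v)$, and the constancy of $N$ is precisely what makes $g(x)$ have $l_1$-norm one without any normalization step that could have spoiled the estimates.
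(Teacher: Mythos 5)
Your proposal is correct and is essentially the paper's own argument: the paper likewise splits each $v$ into copies $(v,k)$ with $G(x)(v,k)=1$ exactly for $k\le F(x)(v)$, normalizes by the constant norm $N$, and verifies the Lipschitz bound via the same coordinatewise identity $\sum_k|G(x)(v,k)-G(y)(v,k)|=|F(x)(v)-F(y)(v)|$ together with $f^{-1}(\st(v))=g^{-1}(\st(v,1))\supset g^{-1}(\st(v,k))$ for coboundedness and the Lebesgue number. The only cosmetic difference is that you index over all of $V\times\mathbb{N}$ while the paper trims to the pairs actually used; nothing substantive changes.
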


\begin{proof}
Let $S=\{(v,n)\in V\times \mathbb{N} | F(x)(v)\le n \mbox{ for some }x\in X\}$, and let $\alpha:S\to V$ be the projection onto the first coordinate.
Define $G:X\to l_1(S)$ by $G(x)(v,i)=1$ if $F(x)(v)\leq i$ and $G(x)(v,i)=0$ if $F(x)(v) > i$.
Then $\sum_{i\in \mathbb{N}} G(x)(v,i)=F(x)(v)$ for all $x\in X$ and $v\in V$, therefore
$\|G(x)\|=\sum_{(v,i)\in S} G(x)(v,i)=\sum_{v\in V}F(x)(v)=\|F(x)\|$.
Let $g$ be the normalization of $G$.
Then $f$ is the contraction of $g$ along $\alpha$ and
\begin{align*}
\| f(x)-f(y)\|&=\sum_{v\in V}\left|\frac{F(x)(v)}{\|F(x)\|}-\frac{F(y)(v)}{\|F(y)\|}\right|=\\
&=\sum_{v\in V}\left|\frac{\sum_j G(x)(v,j)}{\|G(x)\|}-\frac{\sum_j G(y)(v,j)}{\|G(y)\|}\right|=\\
&=\sum_{v\in V}\sum_j\left|\frac{G(x)(v,j)}{\|G(x)\|}-\frac{G(y)(v,j)}{\|G(y)\|}\right|=\\
&=\| g(x)-g(y)\|
\end{align*}
for all $x,y\in X$. Hence $g$ is $(\epsilon,\epsilon)$-Lipschitz.
Because $f^{-1}(\st(v))=g^{-1}(\st(v,1))\supset g^{-1}(\st(v,n))$ for every $n\in \mathbb{N}$, $g$ is cobounded and $\Leb(g)=\Leb(f)$.
\end{proof}

\begin{Proposition}\label{SeparableAreLSWeakPara}
If $X$ is separable at scale $r$ (that means there is a countable subset $S$
of $X$ with $B(S,r)=X$), then there is a $6r$-cobounded barycentric partition of unity $f$ on $X$ whose Lebesgue number is at least $r$.
\end{Proposition}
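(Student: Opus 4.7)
The plan is to build $f$ directly from the enumeration of $S$. I would fix a countable $S=\{s_1,s_2,\ldots\}$ witnessing separability at scale $r$ and, for each $x\in X$, set $n(x):=\min\{j\in\NN\mid d(x,s_j)<r\}$, which is well-defined since $B(S,r)=X$. Then I would define
$$C(x):=\{s_j\mid j\leq n(x)\text{ and }d(x,s_j)<3r\}$$
and take $f(x):=\chi_{C(x)}/|C(x)|$. Each $C(x)$ is a subset of $\{s_1,\ldots,s_{n(x)}\}$, hence finite, and it contains $s_{n(x)}$, hence is non-empty; so $f$ is a bona fide barycentric partition of unity in the sense of \ref{StandardDefOfPropA}.

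The $6r$-coboundedness is then a one-line triangle inequality: since $f^{-1}(\st(s_i))=\{x\mid s_i\in C(x)\}$, any two points in this set lie in $B(s_i,3r)$, so their distance is less than $6r$.

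The main step is $\Leb(f)\geq r$. Fix $x\in X$; the key observation is that $n$ is $\NN$-valued, so the set $\{n(y)\mid y\in B(x,r)\}$ has a minimum $i^*$, attained at some $y^*\in B(x,r)$. For an arbitrary $y\in B(x,r)$, minimality of $i^*$ gives $i^*\leq n(y)$, and the triangle inequality through $x$ and $y^*$ gives
$$d(y,s_{i^*})\leq d(y,x)+d(x,y^*)+d(y^*,s_{n(y^*)})<r+r+r=3r.$$
Combining the two yields $s_{i^*}\in C(y)$, so $B(x,r)\subset f^{-1}(\st(s_{i^*}))$ and $\Leb(f)\geq r$.

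The point to watch — and what forces the asymmetric shape of the construction — is twofold. Without the cutoff $j\leq n(x)$ in the definition of $C(x)$, the sets $C(x)$ might easily be infinite and barycentricity would fail; and without selecting one \emph{uniform} witness $s_{i^*}$ (obtained as a minimum of the integer-valued function $n$ over $B(x,r)$, rather than a per-point choice), one would only get that each $y\in B(x,r)$ lies in \emph{some} $f^{-1}(\st(s_{i(y)}))$, which does not give a Lebesgue number bound. Exploiting the well-ordering of $S$ to resolve both issues simultaneously is the slick ingredient powered by separability at scale $r$.
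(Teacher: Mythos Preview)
Your proof is correct. The approach is in the same spirit as the paper's---both exploit the enumeration of $S$ and the well-ordering of $\NN$ to force point-finiteness, and both verify the Lebesgue bound via a ``first index'' argument---but the packaging differs. The paper first builds a point-finite cover: it greedily disjointifies the balls $B(s_n,2r)$ to get $V_n=B(s_n,2r)\setminus\bigcup_{i<n}B(s_i,2r)$, thickens to $W_n=B(V_n,r)$, checks that each $x$ lies in only finitely many $W_n$ (using exactly your function $n(x)$), and then takes the induced barycentric partition of unity $p_{\mathcal W}$. You instead define the finite set $C(x)$ directly, bypassing the cover construction. Your route is slightly more direct; the paper's route makes the point-finite \emph{cover} explicit, which ties in more visibly with Proposition~\ref{CharacterisationLSWP}. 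The resulting covers $\{W_n\}$ and $\{f^{-1}(\st(s_j))\}$ are genuinely different sets, so this is a legitimate variant rather than a rephrasing.
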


\begin{proof} (due to Misha Levin) Enumerate elements of $S$ as $x_1$, $x_2$, $\ldots$
Put $U_n=B(x_n,2r)$, $V_n=U_n\setminus \bigcup\limits_{i=1}^{n-1} U_i$, and
$W_n=B(V_n,r)$. Notice $\{V_n\}$ is a cover of $X$, so the Lebesgue number of $\mathcal{W}=\{W_n\}$
is at least $r$.\\
Given $x\in X$ choose $m\ge 1$ so that $d(x,x_m) < r$. Notice $B(x,r)\subset B(x_m,2r)$, so
$B(x,r)\cap V_n=\emptyset$ for all $n > m$. Hence $x\notin W_n$ for all $n > m$.\\
Let $f=p_{\mathcal{W}}$ and it is clear $f$ is a $6r$-cobounded barycentric partition of unity on $X$ whose Lebesgue number is at least $r$.
\end{proof}

\section{Large scale paracompactness}\label{LSParacompactness}

In this section we investigate properties of large scale weakly paracompact spaces and their interaction with large scale paracompact spaces.

\begin{Exercise}
A topological space $X$ is weakly paracompact if and only if for each open cover $\mathcal{U}$ of $X$ there is a barycentric partition of unity $f$ on $X$ so that the cover of $X$ induced by $f$ is open and refines $\mathcal{U}$.
\end{Exercise}

\begin{Proposition}\label{LSWeakParaIsAnInvariant}
If $X$ coarsely embeds in a large scale weakly paracompact space $Y$, then $X$ is 
large scale weakly paracompact.
\end{Proposition}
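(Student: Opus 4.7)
The plan is to pull back a suitably chosen cover of $Y$ through the coarse embedding. Denote the coarse embedding by $\iota\colon X\to Y$, with (without loss of generality, nondecreasing) moduli $\rho_-,\rho_+\colon[0,\infty)\to[0,\infty)$ satisfying
$$\rho_-(d_X(x,y))\le d_Y(\iota(x),\iota(y))\le\rho_+(d_X(x,y))$$
for all $x,y\in X$, with $\rho_-(t)\to\infty$ as $t\to\infty$.

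Given $r,s>0$, I would choose $r'>\rho_+(r)$ and $s'>\rho_+(s)$ and invoke large scale weak paracompactness of $Y$---preferably via the ``meets only finitely many'' reformulation available through \ref{CharacterisationLSWP}---to produce a uniformly bounded cover $\mathcal{V}$ of $Y$ with $\Leb(\mathcal{V})\ge s'$ such that each ball $B_Y(y,r')$ meets only finitely many members of $\mathcal{V}$. The candidate cover of $X$ is then the pullback $\mathcal{U}:=\{\iota^{-1}(V):V\in\mathcal{V}\}$.

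The three conditions in \ref{LSWeakParaDef} are then straightforward to check. Uniform boundedness of $\mathcal{U}$ uses $\rho_-\to\infty$: if $\diam(V)\le D$ and $x,y\in\iota^{-1}(V)$, then $\rho_-(d_X(x,y))\le D$ bounds $d_X(x,y)$ by a constant depending only on $D$ and $\rho_-$. For the Lebesgue number condition, $\iota(B_X(x,s))$ lies inside the closed ball of radius $\rho_+(s)<s'$ about $\iota(x)$, which the Lebesgue property of $\mathcal{V}$ places inside some $V\in\mathcal{V}$, giving $\iota^{-1}(V)\supseteq B_X(x,s)$. Finally, any $V$ with $B_X(x,r)\subseteq\iota^{-1}(V)$ must (since the ball is nonempty) have $V$ meeting $\iota(B_X(x,r))\subseteq B_Y(\iota(x),r')$, so by construction only finitely many members of $\mathcal{V}$ qualify.

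The last step is the delicate one, and the reason I would invoke the characterization \ref{CharacterisationLSWP} rather than apply \ref{LSWeakParaDef} to $Y$ directly: the pullback construction naturally transports ``meets only finitely many'' from $Y$ to $X$, whereas the defining condition in \ref{LSWeakParaDef} is phrased in terms of the weaker ``contained in only finitely many.'' Since containment implies nonempty intersection, once the ``meets'' version is established in $X$ the ``contained in'' version follows for free, but to run the pullback argument at all one needs the stronger ``meets'' formulation on the $Y$ side, which is exactly what the equivalent characterization supplies.
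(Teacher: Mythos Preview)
Your proof is correct and follows the same pullback strategy as the paper's: choose suitable $r',s'$, take a witnessing cover of $Y$, and pull it back along the embedding. The only notable difference is that you invoke the ``intersects only finitely many'' reformulation from Proposition~\ref{CharacterisationLSWP} on the $Y$ side (a harmless forward reference, since that result does not depend on this one), whereas the paper works directly from Definition~\ref{LSWeakParaDef}; your extra care here is warranted, as the paper's terse ``observe'' glosses over exactly the containment-versus-intersection subtlety you flag.
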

\begin{proof}
Suppose $f:X\to Y$ is a coarse embedding. Given $r,s > 0$ find $r', s' > 0$ with the following properties:\\
a. $d_X(x,y) < r$ implies $d_Y(f(x),f(y)) < r'$,\\
b. $d_Y(f(x),f(y)) < s'$ implies $d_X(x,y) < s$.\\
Pick a uniformly bounded cover $\mathcal{U}$ of $Y$ of Lebesgue number at least $s'$ such that every $r'$-ball $B(z,r')$ is contained in only finitely many elements of $\mathcal{U}$. Define $\mathcal{V}$ as $f^{-1}(\mathcal{U})$ and observe
$\mathcal{V}$ is of Lebesgue number at least $s$ such that every $r$-ball $B(x,r)$ is contained in only finitely many elements of $\mathcal{V}$.
\end{proof}

\begin{Proposition}\label{CharacterisationLSWP}
The following conditions are equivalent for each metric space $X$:\\
a. For each $r > 0$ there is a uniformly bounded cover $\mathcal{U}$ of $X$ such that every $r$-ball $B(x,r)$ intersects only finitely many elements of $\mathcal{U}$.\\
b. $X$ is large scale weakly paracompact.\\
c. For every uniformly bounded cover $\mathcal U$ of $X$ there exists uniformly bounded point-finite cover $\mathcal V$ such that $\mathcal U$ is refinement of $\mathcal V$.\\
d. For each $M > 0$ there exists a cobounded barycentric partition of unity $f\colon X\to l_1(V)$ of Lebesgue number at least $M$.\\
e. For each $M > 0$ there exists a cobounded simplicial partition of unity $f\colon X\to l_1(V)$ of Lebesgue number at least $M$.
\end{Proposition}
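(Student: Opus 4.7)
The plan is to establish the equivalences via a cycle of implications, bundling the partition-of-unity conditions (c), (d), (e) into one easy block and handling (a), (b) via thickening of covers and a star construction. The implication (d) $\Rightarrow$ (e) is trivial since every barycentric partition of unity is simplicial. For (e) $\Rightarrow$ (d), if $f \colon X \to l_1(V)$ is cobounded simplicial with $\Leb(f) \ge M$, its induced cover $\{f^{-1}(\st(v))\}$ is point-finite (by simplicialness), uniformly bounded, and has Lebesgue number at least $M$; the associated barycentric partition of unity $p_{\mathcal{V}}$ from Section \ref{BarycentricPUs} has the same induced cover and hence inherits cobound and Lebesgue number. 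For (c) $\Rightarrow$ (d), apply (c) to the cover $\{B(x,M):x\in X\}$ to obtain a point-finite uniformly bounded coarsening $\mathcal{V}$ of Lebesgue number at least $M$, and take $p_{\mathcal{V}}$. Conversely, given (d) and any uniformly bounded $\mathcal{U}$ of diameter below $r$, the cover induced by a barycentric partition of unity of Lebesgue number $\ge r$ is point-finite, uniformly bounded, and coarsens $\mathcal{U}$, settling (d) $\Rightarrow$ (c).

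On the cover side, (a) $\Rightarrow$ (b) follows by applying (a) at scale $\max(r,s)$ and thickening a witness $\mathcal{U}$ to $\{B(U,s):U\in\mathcal{U}\}$: the thickened cover has Lebesgue number at least $s$, and any $r$-ball contained in some $B(U,s)$ forces $U \cap B(x,s) \ne \emptyset$, so only finitely many such $U$ exist. For (a) $\Rightarrow$ (c), given $\mathcal{U}$ with $\diam(\mathcal{U}) < r$ and a witness $\mathcal{W}$ from (a) at scale $r$, form the star cover $\mathcal{V} = \{\st(W,\mathcal{U}) : W \in \mathcal{W}\}$ with $\st(W,\mathcal{U}) = W \cup \bigcup\{U \in \mathcal{U} : U \cap W \ne \emptyset\}$. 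It is uniformly bounded, coarsens $\mathcal{U}$, and point-finite: if $x \in \st(W,\mathcal{U})$ then either $x \in W$ or $x \in U$ with $U \cap W \ne \emptyset$ and $\diam U < r$, and in both cases $W$ meets $B(x,r)$, so there are only finitely many such $W$ by (a).

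The main obstacle is closing the cycle by a direct implication from one of (b), (c), (d), (e) back to (a): ball-finiteness at scale $r$ is strictly stronger than point-finiteness of a coarsening. I would tackle this through (b) $\Rightarrow$ (a) by applying (b) with parameters $(r,2r)$, so that an element $V$ meeting $B(z,r)$ at some $y$ forces $B(z,r) \subset B(y,2r) \subset V_y$, where $V_y$ is the Lebesgue-witness for the $2r$-ball about $y$; this places all such $V_y$'s into the finite family $\{V' : B(z,r) \subset V'\}$ supplied by (b). Converting the finiteness of these witnessing $V_y$'s into finiteness of the $V$'s themselves is where care is needed; I expect to use Proposition \ref{NewPartition} to relabel the associated partition of unity by $X \times \mathbb{N}$, producing a cover in which each $\st(x,n)$-preimage lies in a single ball $B(x,M)$, so that the injection into $X \times \mathbb{N}$ renders the ball-finiteness at each $r$-ball transparent.
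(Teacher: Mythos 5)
Your implications among (c), (d), (e), together with (a)$\Rightarrow$(b) and the star-cover argument for (a)$\Rightarrow$(c), are all correct. The proposal nevertheless has a genuine gap, and you have located it yourself: nothing you prove ever arrives back at condition (a). The sketch of (b)$\Rightarrow$(a) does not work as written: for a fixed ball $B(z,r)$, the assignment sending an element $V$ that meets $B(z,r)$ at a point $y$ to a Lebesgue witness $V_y\supset B(y,2r)$ is badly non-injective, so finiteness of the family $\{V' : B(z,r)\subset V'\}$ gives no control on the number of such $V$. Proposition \ref{NewPartition} cannot repair this: it merely re-indexes a partition of unity so that each carrier lies in a ball; it does not convert ``$B(z,r)$ is \emph{contained in} finitely many elements'' into ``$B(z,r)$ \emph{meets} finitely many elements''. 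Moreover, even if (b)$\Rightarrow$(a) were established, your implication graph would still not close: you have $c\Leftrightarrow d\Leftrightarrow e$ and $a\Rightarrow c$, but no arrow from $\{c,d,e\}$ back to $\{a,b\}$, so the five conditions would not all be shown equivalent.

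The missing device is the shrinking operation $B(A,-r):=X\setminus B(X\setminus A,r)$, for which $x\in B(A,-r)$ if and only if $B(x,r)\subset A$, and consequently $B(x,r)\cap B(A,-r)\ne\emptyset$ implies $x\in A$. This is exactly what turns containment-finiteness (or point-finiteness of carriers) into intersection-finiteness. For (b)$\Rightarrow$(a): take $\mathcal{W}$ from (b) with $s=2r$ and put $\mathcal{V}=\{B(W,-2r) : W\in\mathcal{W}\}$; this still covers because $\Leb(\mathcal{W})\ge 2r$, and $B(x,r)\cap B(W,-2r)\ne\emptyset$ forces $B(x,r)\subset W$, which happens for only finitely many $W$. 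The paper uses this trick twice: once to prove (b)$\Rightarrow$(c), and once to prove (e)$\Rightarrow$(a) by applying it to the carriers $f^{-1}(\st(s))$ of a cobounded simplicial partition of unity of Lebesgue number at least $r+1$, where point-finiteness of carriers supplies the counting --- that is precisely the arrow from $\{c,d,e\}$ back to (a) that your proposal lacks. Your star construction for (a)$\Rightarrow$(c) is a correct alternative to the paper's shrinking argument, but it becomes redundant once the linear chain $a\Rightarrow b\Rightarrow c\Rightarrow d\Rightarrow e\Rightarrow a$ is closed.
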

\begin{proof}
a)$\implies$b). Suppose $r,s > 0$.
Pick a cover $\mathcal{V}$ such that every $(r+s)$-ball $B(x,r+s)$ intersects only finitely many elements of $\mathcal{V}$. Notice $B(x,r)\subset B(A,s)$ implies $B(x,r+s)\cap A\ne\emptyset$ for any subset $A$ of $X$. Therefore, the family $\mathcal{U}:=\{B(V,s) | V\in \mathcal{U}\}$ is a uniformly bounded cover of $X$ of Lebesgue number at least $s$ such that every $r$-ball $B(x,r)$ is contained in only finitely many elements of $\mathcal{U}$. According to Definition
\ref{LSWeakParaDef},
$X$ is large scale weakly paracompact.\\
b)$\implies$c). Suppose $\mathcal U$ is a uniformly bounded cover of $X$. Put $r=\diam(\mathcal{U})+1$, $s=2r$, and find a uniformly bounded cover $\mathcal W$ of $X$ of Lebesgue number at least $s$ such that every $r$-ball $B(x,r)$ is contained in only finitely many elements of $\mathcal{W}$. Given $A\subset X$ define $B(A,-r)$ as $X\setminus B(X\setminus A,r)$ and observe $x\in B(A,-r)\implies B(x,r)\subset A$.
Therefore, the family $\mathcal{V}:=\{B(W,-r) | W\in \mathcal{W}\}$ is a uniformly bounded cover of $X$ of Lebesgue number at least $r$ such that every $x\in X$ is contained in only finitely many elements of $\mathcal{V}$. Also, $\mathcal{V}$ coarsens $\mathcal{U}$.\\
c)$\implies$d). Given $M>0$ there is a point-finite uniformly bounded cover $\mathcal V$ such that $\{B(x,M)\mid x\in X\}$ is a refinement of $\mathcal V$. The standard partition of unity $p_{\mathcal V}$ is cobounded simplicial and of Lebesgue number at least $M$.\\
d)$\implies$e) is obvious.\\
e)$\implies$a).
Let $r > 0$. There exists a cobounded simplicial partition of unity $f\colon X\to l_1(S)$ of Lebesgue number at least $r+1$. Consider $\mathcal{U}=\{B(\st (s),-r)\mid s\in S\}$.
It is a uniformly bounded cover of $X$ such that every $r$-ball $B(x,r)$ intersects only finitely many elements of $\mathcal{U}$ as $B(x,r)\cap B(A,-r)\ne \emptyset\implies x\in A$.
\end{proof}

\begin{Corollary}
If $X$ is large scale separable, then it is large scale weakly paracompact.
\end{Corollary}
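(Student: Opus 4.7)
The plan is to combine the two results that have just been proved, \ref{SeparableAreLSWeakPara} and \ref{CharacterisationLSWP}, and the proof should amount to little more than matching their hypotheses and conclusions.

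First I would unpack the term \emph{large scale separable}. In view of the notion of \emph{separable at scale $r$} appearing in \ref{SeparableAreLSWeakPara} (existence of a countable $S\subset X$ with $B(S,r)=X$), the natural reading is that $X$ is large scale separable if and only if $X$ is separable at scale $r$ for every $r>0$.

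Next, given any $M>0$, I would apply \ref{SeparableAreLSWeakPara} to a countable $S\subset X$ satisfying $B(S,M)=X$. That proposition produces a $6M$-cobounded barycentric partition of unity $f\colon X\to l_1(V)$ whose Lebesgue number is at least $M$. Since $M$ was arbitrary, condition (d) of \ref{CharacterisationLSWP} is satisfied.

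Finally, invoking the equivalence of (d) with (b) in \ref{CharacterisationLSWP} delivers that $X$ is large scale weakly paracompact. There is no real obstacle here; the only subtlety is making sure that \emph{large scale separable} is interpreted uniformly in scale, so that \ref{SeparableAreLSWeakPara} can be applied for each $M>0$ separately.
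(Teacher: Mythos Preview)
Your approach matches the paper's: apply \ref{SeparableAreLSWeakPara} to produce, for each $M>0$, a cobounded barycentric partition of unity of Lebesgue number at least $M$, and then invoke condition (d) of \ref{CharacterisationLSWP}. The only discrepancy is in your reading of \emph{large scale separable}. The paper defines it as the existence of a countable $S\subset X$ with $B(S,r)=X$ for \emph{some} $r>0$, not for every $r>0$; these are not equivalent (an uncountable set with the $0$--$1$ metric is separable at scale $2$ but not at scale $\tfrac12$). The paper's proof absorbs this by setting $\bar M=\max\{M,r\}$: since $B(S,r)=X$ forces $B(S,\bar M)=X$, one applies \ref{SeparableAreLSWeakPara} at scale $\bar M$ and obtains Lebesgue number at least $\bar M\ge M$. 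With that one-line adjustment your argument is exactly the paper's.
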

\begin{proof}
$X$ is large scale separable if there is a countable set $S$ of $X$ such that $X=B(S,r)$ for some $r > 0$. Let $M>0$ and $\bar M=\max\{M,r\}$. Then $B(S,\bar M)=X$ and by Proposition~\ref{SeparableAreLSWeakPara} there exists a cobounded partition of unity on
$X$ whose Lebesgue number is at least $\bar M\ge M$. By Proposition~\ref{CharacterisationLSWP} $X$ is large scale weakly paracompact.
\end{proof}

\begin{Problem}
Is every metric space large scale weakly paracompact?
\end{Problem}

Use \ref{CharacterisationLSWP} to prove the following.
\begin{Corollary}
Every large scale paracompact space $X$ is large scale weakly paracompact.
\end{Corollary}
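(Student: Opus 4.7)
The plan is to deduce the conclusion directly from characterization (e) in Proposition \ref{CharacterisationLSWP}, which says that $X$ is large scale weakly paracompact iff for every $M>0$ there is a cobounded simplicial partition of unity on $X$ of Lebesgue number at least $M$. So the task reduces to producing, for each $M>0$, such a partition of unity from the defining data of large scale paracompactness.

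Given $M>0$, I would set $\mu=1/M$ and invoke Definition \ref{LSParaDef} to get a simplicial partition of unity $f:X\to l_1(V)$ that is $(\mu,\mu)$-Lipschitz, whose induced cover $\{f^{-1}(\st(v))\}_{v\in V}$ is uniformly bounded and coarsens the cover of $X$ by $(1/\mu)$-balls. The uniform boundedness of the induced cover is by definition the statement that $f$ is cobounded. The coarsening property means that each ball $B(x,1/\mu)=B(x,M)$ is contained in some $f^{-1}(\st(v))$, i.e.\ $\Leb(f)\ge M$. Hence $f$ witnesses condition (e) of Proposition \ref{CharacterisationLSWP} at scale $M$, and so $X$ is large scale weakly paracompact.

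There is no real obstacle here: the Lipschitz control in Definition \ref{LSParaDef} plays no role in the argument, and the implication is a routine dualization of the topological fact that paracompactness implies weak paracompactness. The only thing to be careful about is matching conventions: the phrase ``coarsening of the cover by $(1/\mu)$-balls'' must be read in the sense that $\{B(x,1/\mu)\}$ refines the cover induced by $f$, which is precisely the Lebesgue number condition needed for (e).
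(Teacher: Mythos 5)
Your argument is correct and is exactly the route the paper intends: the paper gives no written proof beyond the instruction to use Proposition \ref{CharacterisationLSWP}, and applying Definition \ref{LSParaDef} with $\mu=1/M$ to verify condition (e) of that proposition is precisely the expected one-line deduction. Your observation that the Lipschitz control is irrelevant here is also accurate.
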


We do not know if we can weaken Definition \ref{LSParaDef} by dropping the assumption of partitions of unity being simplicial.

\begin{Problem}\label{AltLSParaDef}
Let $X$ be a metric space such that for each $\epsilon > 0$
there is a partition of unity $f:X\to l_1(V)$ satisfying the following conditions:\\
a. $f$ is $(\epsilon,\epsilon)$-Lipschitz,\\
b. the cover of $X$ induced by $f$ (the carriers of $f$) is uniformly bounded and is a coarsening of the cover of $X$ by $\frac{1}{\epsilon}$-balls.\\
Is $X$ large scale paracompact?
\end{Problem}

We will show the answer to \ref{AltLSParaDef} is positive if $X$ is large scale weakly paracompact.
\begin{Lemma}\label{PUReductionToSimplicial}
Suppose $1 > \epsilon > 0$. If $f:X\to l_1(V)$ is an $(\frac{\epsilon}{2},\frac{\epsilon}{2})$-Lipschitz partition of unity on $X$ that is cobounded, then there is a simplicial partition of unity $g:X\to l_1(V)$ that is $(\epsilon,\epsilon)$-Lipschitz and is cobounded.
\end{Lemma}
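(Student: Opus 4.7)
The plan is to construct $g$ by pointwise truncation of $f$ to a finite sub-carrier, followed by renormalization; no new vertices are added, so coboundedness is inherited for free. Concretely, for each $x\in X$, since $f(x)$ is a non-negative unit vector in $l_1(V)$, its carrier is countable and summable, so I can pick a \emph{finite} subset $T(x)\subset\{v\in V\mid f(x)(v)>0\}$ with
$$\sum_{v\in T(x)} f(x)(v) \ge 1-\tfrac{\epsilon}{8}.$$
Define $G(x)\in l_1(V)$ by $G(x)(v)=f(x)(v)$ for $v\in T(x)$ and $0$ otherwise, and let $g(x):=G(x)/\|G(x)\|_1$. Since $\epsilon<1$, $\|G(x)\|_1\ge 7/8>0$, so $g$ is well defined; by construction $g(x)$ is non-negative with $l_1$-norm $1$ and finite carrier $T(x)$, so $g$ is a simplicial partition of unity.

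Coboundedness comes for free: if $g(x)(v)\ne 0$ then $v\in T(x)\subset\mathrm{supp}(f(x))$, so $g^{-1}(\mathrm{st}(v))\subset f^{-1}(\mathrm{st}(v))$. Hence any cobound valid for $f$ works for $g$. The central estimate is the pointwise approximation
$$\|g(x)-f(x)\|_1 \le \tfrac{\epsilon}{4}.$$
Splitting the sum along $T(x)$ and its complement and using $\sum_{v\in T(x)} f(x)(v)=\|G(x)\|_1$ gives
\begin{align*}
\|g(x)-f(x)\|_1
&= \sum_{v\in T(x)} f(x)(v)\bigl(\tfrac{1}{\|G(x)\|_1}-1\bigr) + \sum_{v\notin T(x)} f(x)(v)\\
&= (1-\|G(x)\|_1) + (1-\|G(x)\|_1) \le 2\cdot\tfrac{\epsilon}{8}=\tfrac{\epsilon}{4}.
\end{align*}

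The Lipschitz conclusion is then routine from the triangle inequality together with the hypothesis that $f$ is $(\tfrac{\epsilon}{2},\tfrac{\epsilon}{2})$-Lipschitz:
\begin{align*}
\|g(x)-g(y)\|_1
&\le \|g(x)-f(x)\|_1 + \|f(x)-f(y)\|_1 + \|f(y)-g(y)\|_1\\
&\le \tfrac{\epsilon}{4} + \bigl(\tfrac{\epsilon}{2}\,d(x,y)+\tfrac{\epsilon}{2}\bigr) + \tfrac{\epsilon}{4} = \tfrac{\epsilon}{2}\,d(x,y)+\epsilon \le \epsilon\,d(x,y)+\epsilon.
\end{align*}
The only subtle point in the plan is resisting the temptation to introduce auxiliary vertices (as in Proposition \ref{NewPartition} or \ref{ExpansionToBaryPU}); forcing $T(x)\subset\mathrm{supp}(f(x))$ keeps the target set $V$ fixed and makes coboundedness automatic, so the entire argument reduces to the one-line tail estimate above.
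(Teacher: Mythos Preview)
Your proof is correct and follows essentially the same approach as the paper's: truncate each $f(x)$ to a finite sub-carrier with small tail, then restore unit norm. The only cosmetic difference is that you renormalize by dividing by $\|G(x)\|_1$, whereas the paper dumps the tail mass onto a single chosen vertex $v(x)\in C(x)$; both yield the same $\|g(x)-f(x)\|_1\le\epsilon/4$ estimate (your choice of threshold $\epsilon/8$ is in fact slightly tighter than the paper's $\epsilon/4$, which makes the final $(\epsilon,\epsilon)$-Lipschitz bound go through cleanly for all $d(x,y)\ge 0$).
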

\begin{proof}
For each $x\in X$ pick a finite subset $C(x)$ of the carrier of $f(x)$ such that
$$\sum\limits_{v\notin C(x)}f(x)(v) < \frac{\epsilon}{4}.$$
Define $g(x)$ by setting $g(x)(v)=0$ for all $v\notin C(x)$, then picking $v(x)\in C(x)$
and setting $g(x)(v(x))=f(x)(v(x))+\sum\limits_{v\notin C(x)}f(x)(v)$. For $v\in C(x)\setminus \{v(x)\}$ we put $g(x)(v)=f(x)(v)$.
\end{proof}

We are ready to show that the difference between exact spaces of Dadarlat-Guentner \cite{DaGu} (see \ref{ExactSpacesDef}) and large scale paracompact spaces is large scale weak paracompactness.
\begin{Theorem}\label{MainThmOnLSParacompactness}
If $X$ is large scale weakly paracompact
and for each $\epsilon > 0$
there is an $(\epsilon,\epsilon)$-Lipschitz partition of unity on $X$ that is cobounded,
then $X$ is large scale paracompact.
\end{Theorem}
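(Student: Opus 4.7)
The strategy is to construct $f$ as a convex combination $f := (1-\lambda) h + \lambda g_1$, where $g_1$ supplies the Lebesgue number (via large scale weak paracompactness) and $h$ supplies the Lipschitz behavior (obtained by contracting a Lipschitz partition of unity furnished by the hypothesis). Given $\mu > 0$, fix small parameters $\epsilon, \lambda > 0$ with $\epsilon + 2\lambda \le \mu$, for instance $\epsilon = \lambda = \mu/4$ (without loss of generality $\mu \le 1$).

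First, from the hypothesis together with Lemma~\ref{PUReductionToSimplicial}, obtain a simplicial $(\epsilon,\epsilon)$-Lipschitz cobounded partition of unity $g_2 : X \to l_1(W)$ with cobound $D_2$. Using large scale weak paracompactness via Proposition~\ref{CharacterisationLSWP}(e), obtain a simplicial cobounded partition of unity $g_1 : X \to l_1(V)$ with cobound $D_1$ and Lebesgue number strictly greater than both $D_2$ and $1/\mu$. Write $B_v := g_1^{-1}(\st v)$. Since $\diam(g_2^{-1}(\st w)) < D_2$ and $\Leb\{B_v\} > D_2$, for each $w \in W$ one can pick $\alpha(w) \in V$ with $g_2^{-1}(\st w) \subset B_{\alpha(w)}$ (choose any $x_w \in g_2^{-1}(\st w)$ and apply the Lebesgue condition at $x_w$). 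Set $h := \alpha_* \circ g_2$. By Lemma~\ref{LipschitzOfContraction}, $h$ is $(\epsilon,\epsilon)$-Lipschitz; it is simplicial because the carrier of $h(x)$ is the image under $\alpha$ of the carrier of $g_2(x)$; and $h^{-1}(\st v) = \bigcup_{\alpha(w)=v} g_2^{-1}(\st w) \subset B_v$.

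Now set $f := (1-\lambda) h + \lambda g_1 : X \to l_1(V)$. The coefficients sum to $1$, so $f$ is a partition of unity; its carrier at each $x$ is the (finite) union of the carriers of $h(x)$ and $g_1(x)$, hence $f$ is simplicial. The carrier $f^{-1}(\st v) = h^{-1}(\st v) \cup B_v$ equals $B_v$ (using $h^{-1}(\st v) \subset B_v$), so the induced cover is $\{B_v\}_{v \in V}$, which is $D_1$-cobounded and has Lebesgue number $> 1/\mu$, verifying condition (b) of Definition~\ref{LSParaDef}. For condition (a), using the trivial estimate $\|g_1(x) - g_1(y)\|_1 \le 2$,
$$\|f(x) - f(y)\|_1 \le (1-\lambda)(\epsilon d(x,y) + \epsilon) + 2\lambda \le \epsilon d(x,y) + (\epsilon + 2\lambda) \le \mu d(x,y) + \mu,$$
so $f$ is $(\mu,\mu)$-Lipschitz, as required.

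The main obstacle is that neither ingredient alone possesses both desired properties: $g_1$ has no Lipschitz control beyond the trivial $2$-bound, while $g_2$ has no guaranteed Lebesgue number. A naive tensor product $g_1 \otimes g_2$ fails because it inherits an additive Lipschitz error of size roughly $2$ from the $g_1$ factor. The convex combination works precisely because the small coefficient $\lambda = O(\mu)$ in front of $g_1$ damps its $\le 2$ variation down to a harmless $2\lambda \le \mu$ additive term, while any $\lambda > 0$ is enough to force $B_v \subset f^{-1}(\st v)$, effectively transplanting the Lebesgue number of $\{B_v\}$ onto $f$.
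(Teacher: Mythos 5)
Your proof is correct, but it is a genuinely different construction from the paper's, so a comparison is worthwhile. The paper also begins with a cover $\{U_s\}_{s\in S}$ supplied by weak paracompactness (chosen so that every $\frac{1}{\epsilon}$-ball meets only finitely many $U_s$) and a $(\delta,\delta)$-Lipschitz cobounded simplicial partition of unity $f$, but instead of a convex combination it uses an averaging construction: picking representatives $x_s\in U_s$ and setting $S(x)=\{s\mid B(x,\frac{1}{\epsilon})\cap U_s\ne\emptyset\}$, it defines $g(x)=\frac{1}{|S(x)|}\sum_{s\in S(x)}f(x_s)$. There the Lebesgue number comes from the observation that $x\in U_s$ forces $s\in S(y)$ for every $y\in B(x,\frac{1}{\epsilon})$, and the Lipschitz estimate comes from pairing off differences $f(x_s)-f(x_t)$ with $d(x_s,x_t)<2M+d(x,y)$, which is why $\delta$ must be taken of order $\epsilon/M$ where $M$ is the cobound of the cover. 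Your $f=(1-\lambda)h+\lambda g_1$ avoids the averaging entirely: the contraction along $\alpha$ aligns the carriers of the Lipschitz partition of unity inside the sets $B_v$, any positive $\lambda$ then forces the induced cover to be exactly $\{B_v\}$, and the only Lipschitz loss is the trivial additive $2\lambda$. This is more modular --- the Lipschitz estimate is a one-line triangle inequality with no dependence on the cobound of either cover --- at the cost of invoking the stronger equivalent form (e) of Proposition \ref{CharacterisationLSWP} rather than just a point-finite cover. One pedantic remark: your $\alpha\colon W\to V$ need not be surjective, so $h$ is not literally a contraction in the sense of Definition \ref{ContractionOfPU}; but $\alpha_{\ast}$ is still norm-nonincreasing, so the proof of Lemma \ref{LipschitzOfContraction}(b) applies verbatim, and this does not affect the argument.
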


\begin{proof}
Given $\epsilon > 0$ pick a cover $\{U_s\}_{s\in S}$ of $X$ consisting of non-empty sets that is $M$-cobounded and every ball $B(x,\frac{1}{\epsilon})$ intersects only finitely many elements of $\{U_s\}_{s\in S}$. For each $s\in S$ pick $x_s\in U_s$.

For each $x\in X$ let $S(x)=\{s\in S | B(x,\frac{1}{\epsilon})\cap U_s\ne \emptyset\}$.

Pick $\delta < \frac{\epsilon}{2M+1}$ and pick a simplicial partition of unity $f:X\to l_1(V)$ on $X$
that is cobounded and $(\delta,\delta)$-Lipschitz using \ref{PUReductionToSimplicial}.

Define a new partition of unity $g$ on $X$ by the formula
$$g(x)=\frac{\sum\limits_{s\in S(x)} f(x_s)}{|S(x)|}.$$
Notice it is cobounded.

Given $x\in X$ choose $s\in S$ so that $x\in U_s$ and choose $v\in V$
satisfying $f(x_s)(v)\ne 0$. If
$y\in B(x,\frac{1}{\epsilon})$, then $s\in S(y)$ so $g(y)(v)\ne 0$ and $y\in g^{-1}(\st(v))$.
That proves the Lebesgue number of $g$ is at least $\frac{1}{\epsilon}$.

Given $x,y\in X$,
$$|S(x)|\cdot |S(y)|\cdot (g(x)-g(y))=\sum\limits_{s\in S(x)} |S(y)|\cdot f(x_s)
-\sum\limits_{t\in S(y)} |S(x)|\cdot f(x_t)$$
can be rewritten as the sum of $|S(x)|\cdot |S(y)|$ differences of the form
$$f(x_s)-f(x_t)$$
where $s\in S(x)$ and $t\in S(y)$.
Therefore $d(x_s,x_t) < 2M+d(x,y)$ implying $\|f(x_s)-f(x_t)\|\leq \delta (2M+d(x,y))+\delta$.
Thus
$$|S(x)|\cdot |S(y)|\cdot \|g(x)-g(y)\|\leq |S(x)|\cdot |S(y)|\cdot (\delta (2M+d(x,y))+\delta)$$
resulting in
$$\|g(x)-g(y)\|\leq \delta (2M+d(x,y))+\delta < \epsilon\cdot d(x,y)+\epsilon$$
as we can assume $M>1/2$.
\end{proof}

The meaning of Theorem \ref{MainThmOnLSParacompactness} is that, in the class of large scale weakly paracompact spaces, exact spaces of Dadarlat-Guentner coincide with the class of large scale paracompact spaces.

\begin{Corollary}\label{LSParaIsAnInvariant}
If $X$ coarsely embeds in a large scale paracompact space $Y$, then $X$ is 
large scale paracompact.
\begin{proof}
By \ref{LSWeakParaIsAnInvariant} $X$ is large scale weakly paracompact.
Suppose $f:X\to Y$ is a coarse embedding. Pick a sequence $f_n:X\to l_1(V_n)$ of cobounded partitions of unity that are $(\frac{1}{n},\frac{1}{n})$-Lipschitz and observe that
$g_n=f_n\circ f$ is a sequence of cobounded partitions of unity such that for some sequence $\epsilon_n\to 0$, $g_n$ is $(\epsilon_n,\epsilon_n)$-Lipschitz.
\end{proof}
\end{Corollary}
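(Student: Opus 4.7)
The plan is to reduce the statement to Theorem~\ref{MainThmOnLSParacompactness}, which characterizes large scale paracompactness of $X$ by two conditions: large scale weak paracompactness, and the existence, for each $\epsilon>0$, of a cobounded $(\epsilon,\epsilon)$-Lipschitz partition of unity on $X$. Both conditions should be inherited from $Y$ through the coarse embedding $f:X\to Y$.

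The first condition is essentially immediate. Since $Y$ is large scale paracompact, it is large scale weakly paracompact (via the preceding corollary drawn from Proposition~\ref{CharacterisationLSWP}), and large scale weak paracompactness transfers to coarse subspaces by Proposition~\ref{LSWeakParaIsAnInvariant}, giving this property for $X$.

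For the second condition, fix $\epsilon>0$. For every $\mu>0$ pick a simplicial cobounded $(\mu,\mu)$-Lipschitz partition of unity $h_\mu:Y\to l_1(V_\mu)$ witnessing large scale paracompactness of $Y$, and set $g_\mu=h_\mu\circ f$. Coboundedness of $g_\mu$ is straightforward: if $x,y\in g_\mu^{-1}(\st(v))$, then $f(x),f(y)$ lie in the bounded set $h_\mu^{-1}(\st(v))\subset Y$, and the inverse control of the coarse embedding bounds $d_X(x,y)$ by a constant depending only on $\mu$.

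The main obstacle is the $(\epsilon,\epsilon)$-Lipschitz condition for $g_\mu$, because a coarse embedding need not admit any linear upper control; direct composition only yields $\|g_\mu(x)-g_\mu(y)\|\le \mu\,\rho_+(d_X(x,y))+\mu$, where $\rho_+$ is the non-decreasing upper control of $f$. I would resolve this with a two-regime argument. Since partitions of unity take values on the unit sphere of $l_1$, one has $\|g_\mu(x)-g_\mu(y)\|\le 2$ unconditionally, so the desired bound $\|g_\mu(x)-g_\mu(y)\|\le \epsilon\,d_X(x,y)+\epsilon$ is automatic once $d_X(x,y)\ge (2-\epsilon)/\epsilon$. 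On the complementary bounded regime, $d_Y(f(x),f(y))$ is uniformly bounded by $S:=\rho_+((2-\epsilon)/\epsilon)$, so choosing $\mu\le \epsilon/(S+1)$ gives $\|g_\mu(x)-g_\mu(y)\|\le \mu(S+1)\le \epsilon$. Combining the two regimes produces the sought partition of unity, and Theorem~\ref{MainThmOnLSParacompactness} then delivers large scale paracompactness of $X$.
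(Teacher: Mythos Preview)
Your proposal is correct and follows essentially the same route as the paper: establish large scale weak paracompactness via Proposition~\ref{LSWeakParaIsAnInvariant}, pull back cobounded $(\mu,\mu)$-Lipschitz partitions of unity from $Y$ along $f$, and apply Theorem~\ref{MainThmOnLSParacompactness}. The paper compresses your two-regime Lipschitz argument into the single word ``observe'' (the same trick appears after Definition~\ref{ExactSpacesDef} and in the proof of Proposition~\ref{CharPropAOfYuViaBaryPUs}), so you have simply unpacked what the paper leaves implicit.
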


\section{Strong Property A}\label{StrongPropertyASection}

According to Rules \ref{RulesOfDualizationRemark} the Property A has another variant similar to large scale paracompactness. In this section we call it Strong Property A and we show that, in the class of large scale finitistic spaces, large scale paracompactness and strong Property A are equivalent. Large scale finitistic spaces contain all spaces of bounded geometry. As the concept of bounded geometry is not a coarse invariant, we introduce a new coarse invariant (called coarsely doubling) that encompasses all spaces of bounded geometry.

\begin{Problem}\label{PropertyAQuestion}
Is $X$ large scale paracompact if it has, for every $\epsilon > 0$,
an $(\epsilon,\epsilon)$-Lipschitz barycentric partition of unity on $X$ that is cobounded?
\end{Problem}

\begin{Remark}
In view of \ref{MainThmOnLSParacompactness} it suffices to show $X$ is large scale weakly paracompact in order to answer \ref{PropertyAQuestion} in the positive.
\end{Remark}

We want to strengthen Yu's \cite{Yu00} definition of Property A to arbitrary metric spaces so that spaces with strong Property A are large scale paracompact.

\begin{Definition}\label{NewStrongPropADef}
A metric space $X$ has \textbf{strong Property A} if for every $\epsilon > 0$
there is an $(\epsilon,\epsilon)$-Lipschitz barycentric partition of unity on $X$ that is cobounded and whose Lebesgue number is at least $\frac{1}{\epsilon}$.
\end{Definition}

\begin{Observation}
As in \ref{LSParaIsAnInvariant} one can show that if
 $X$ coarsely embeds in a space $Y$ with strong Property A, then $X$ has 
strong Property A.
\end{Observation}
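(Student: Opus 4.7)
My plan is to recycle the strategy of \ref{LSParaIsAnInvariant}: given a coarse embedding $f\colon X\to Y$ with nondecreasing control functions $\rho_-,\rho_+\colon[0,\infty)\to[0,\infty)$ satisfying $\rho_-(d_X(x,x'))\le d_Y(f(x),f(x'))\le \rho_+(d_X(x,x'))$ and $\rho_\pm(t)\to\infty$, I will pull back an increasingly fine sequence of barycentric partitions of unity from $Y$ and then rebalance parameters so that the Lipschitz constant, the diameter of the induced cover, and the Lebesgue number are all simultaneously controlled on $X$.

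For each $n$, strong Property A of $Y$ provides a barycentric, $M_n$-cobounded, $(1/n,1/n)$-Lipschitz partition of unity $g_n\colon Y\to l_1(V_n)$ with $\Leb(g_n)\ge n$. Set $h_n:=g_n\circ f$. Because each $g_n(y)$ has the form $\chi_{C}/|C|$, so does $h_n(x)$, so $h_n$ remains barycentric. Coboundedness follows from the lower control: if $h_n(x)(v),h_n(x')(v)>0$ then $f(x),f(x')\in g_n^{-1}(\st(v))$, so $d_Y(f(x),f(x'))<M_n$, and the coarse embedding property bounds $d_X(x,x')$ by a quantity depending only on $M_n$ and $\rho_-$. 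The Lebesgue number is handled via the upper control: $f(B(x,r))\subset B(f(x),\rho_+(r))$, and once $\rho_+(r)\le n$ the latter ball lies inside some $g_n^{-1}(\st(v))$. Writing $L_n:=\sup\{r\ge 0:\rho_+(r)\le n\}$, this gives $\Leb(h_n)\ge L_n$, and $L_n\to\infty$ since $\rho_+$ is finite-valued.

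The delicate point is the Lipschitz estimate, because a superlinear $\rho_+$ spoils any attempt to directly inherit $(1/n,1/n)$-Lipschitz behavior. My fix is a threshold split: for $d_X(x,x')\le T_n$ one has $\|h_n(x)-h_n(x')\|\le \rho_+(T_n)/n+1/n$, and for $d_X(x,x')>T_n$ one uses only $\|h_n(x)-h_n(x')\|\le 2$. Choosing $T_n\to\infty$ slowly enough to force $\rho_+(T_n)/n\to 0$ (e.g.\ the largest $T_n\in\mathbb{N}$ with $\rho_+(T_n)\le\sqrt{n}$), I put
\[
\epsilon_n:=\max\!\Bigl\{\tfrac{\rho_+(T_n)+1}{n},\ \tfrac{2}{T_n},\ \tfrac{1}{L_n}\Bigr\},
\]
so $\epsilon_n\to 0$. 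A two-case check then shows $h_n$ is $(\epsilon_n,\epsilon_n)$-Lipschitz, while by construction $1/\epsilon_n\le L_n\le\Leb(h_n)$, so the Lebesgue-number requirement of Definition~\ref{NewStrongPropADef} is met. Given an arbitrary $\epsilon>0$, choosing $n$ with $\epsilon_n\le\epsilon$ yields the desired partition on $X$. The real obstacle, absent from \ref{LSParaIsAnInvariant}, is that strong Property A couples the Lebesgue-number demand to the Lipschitz constant, and both must be extracted from the single nonlinear gauge $\rho_+$; the threshold $T_n$ is precisely the device that decouples these competing requirements.
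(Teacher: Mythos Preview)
Your argument is correct and follows exactly the strategy the paper points to: pull back the barycentric partitions along the coarse embedding and rebalance parameters so that $\epsilon_n\to 0$ while $\Leb(h_n)\ge 1/\epsilon_n$. The paper itself supplies no proof here beyond the reference to \ref{LSParaIsAnInvariant}; your threshold split is precisely the mechanism implicit in the claim there that the composites are $(\epsilon_n,\epsilon_n)$-Lipschitz for some $\epsilon_n\to 0$, and your inclusion of the term $1/L_n$ in $\epsilon_n$ is the one extra bookkeeping step needed to accommodate the Lebesgue-number clause of Definition~\ref{NewStrongPropADef}.
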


\begin{Definition}\label{LSFinitisticDef}
A metric space $X$ is \textbf{large scale finitistic} if for every $r > 0$
there is a uniformly bounded cover $\mathcal{U}$ of $X$ whose Lebesgue number
$\Leb(\mathcal{U})$ is at least $r$ and there is $n(\mathcal{U})=n > 0$ such that each $x\in X$ belongs to at most $n$ elements of $\mathcal{U}$.
\end{Definition}

Recall the concept of doubling spaces from analysis (\cite{Hei}, p.81).

\begin{Definition}\label{DoublingDef}
A metric space $X$ is \textbf{doubling} if for every $r > 0$ there is a natural number $n(r)$ such that every $2r$-ball can be covered by at most $n(r)$ many $r$-balls.
\end{Definition}

Here is a natural generalization of doubling spaces.

\begin{Definition}\label{CoarselyDoublingDef}
A metric space $X$ is \textbf{coarsely doubling} (or \textbf{large scale doubling}) if there is $M > 0$ with the property that for every $r > M$ there is a natural number $n(r)$ such that every $2r$-ball can be covered by at most $n(r)$ many $r$-balls.
\end{Definition}

\begin{Proposition}\label{CoarselyDoublingProperties}
a. Every space of bounded geometry is doubling.\\
b. Every coarsely doubling space $X$ is large scale finitistic.\\
c. Every coarsely doubling space $X$ contains a subspace $Y$ of bounded geometry
such that the inclusion $Y\to X$ is a coarse equivalence.
\end{Proposition}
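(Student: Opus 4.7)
Part (a) is immediate: if the bounded-geometry constant at scale $2r$ is $N(2r)$, then any ball $B(x,2r)$ consists of at most $N(2r)$ points, and the $r$-balls centred at those points already cover it. So $n(r):=N(2r)$ works in Definition~\ref{DoublingDef}.

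For parts (b) and (c) I would introduce a single object to use in both: a maximal $r$-separated subset $Y \subset X$ for some fixed $r > 2M$ (i.e.\ distinct points of $Y$ lie at distance $\ge r$, and no further point can be added). Maximality forces every $x \in X$ to have some $y \in Y$ with $d(x,y) < r$, so defining $\pi\colon X\to Y$ by $\pi(x)=y$ (any such $y$, and $\pi|_Y=\mathrm{id}$) produces a coarse inverse to the isometric inclusion $\iota\colon Y\hookrightarrow X$: both composites lie within distance $r$ of the identity. Hence $\iota$ is a coarse equivalence, which is half of part (c).

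The technical heart is showing that $Y$ has bounded geometry, i.e.\ that $|Y\cap B(x,s)|$ is bounded uniformly in $x$ for every $s>0$. For $s\le r/2$ any two points of $B(x,s)$ are at distance $<2s\le r$, so at most one of them lies in $Y$. For large $s$ I would iterate the coarsely doubling property: starting from $B(x,2s)$, cover it by $n(s)$ many $s$-balls (legal when $s>M$), then cover each of those by $n(s/2)$ many $(s/2)$-balls (legal when $s/2>M$), and so on, obtaining after $k$ steps a cover of $B(x,2s)$ by $\prod_{j=0}^{k-1}n(s/2^j)$ many $(s/2^k)$-balls, provided $s/2^{k-1}>M$. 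Choosing $k$ minimal with $s/2^k\le r/2$ drops the diameters below $r$, so each small ball meets $Y$ in at most one point and the product is a uniform bound on $|Y\cap B(x,s)|$. The main obstacle is precisely this balancing act: the iteration has to reach scale $\le r/2$ while every intermediate radius remains above $M$, and the assumption $r>2M$ is exactly what makes the two constraints compatible.

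Part (b) is then a short corollary. Given $r_0>0$, pick $r>\max(2M,r_0)$ and let $Y$ be the maximal $r$-separated set from (c). The family $\mathcal{U}=\{B(y,2r)\}_{y\in Y}$ has $\diam(\mathcal{U})\le 4r$ and Lebesgue number at least $r\ge r_0$ (for any $x$, pick $y\in Y$ with $d(x,y)<r$; then $B(x,r)\subset B(y,2r)$). Its multiplicity at $x$ is exactly $|Y\cap B(x,2r)|$, which is bounded uniformly in $x$ by part (c); hence $X$ is large scale finitistic in the sense of Definition~\ref{LSFinitisticDef}.
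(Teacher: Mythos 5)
Your proposal follows the same route as the paper's proof: a maximal $r$-separated set $Y$ with $r>2M$, iterated use of the doubling condition to bound $|Y\cap B(x,s)|$, the coarse equivalence via $r$-density, and the cover $\{B(y,2r)\}_{y\in Y}$ for large scale finitisticness. There is, however, a quantitative slip in exactly the step you single out as the main obstacle. Halving downward from an arbitrary $s$, you need some radius $s/2^{j}$ to land in the window $(M,\,r/2]$: above $M$ so that the doubling hypothesis may be applied with that target radius, and at most $r/2$ so that each resulting ball has diameter less than $r$ and hence meets $Y$ in at most one point. The hypothesis $r>2M$ makes this window nonempty but not necessarily wide enough to catch a term of a geometric sequence of ratio $1/2$: take $M=1$, $r=5/2$, $s=3$; the radii $3,\ 3/2,\ 3/4,\dots$ all miss $(1,\,5/4]$. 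The fix is easy and costs nothing --- choose $r>4M$ instead, so that $(M,r/2]\supset(M,2M]$ and every halving sequence starting above $2M$ must hit it --- or else reorganize the iteration as the paper does: anchor the decomposition at the two fixed radii $r$ and $r/2$ (both legal precisely because $r>2M$) to get $|Y\cap B(x,2r)|\le n(r/2)\cdot n(r)$, and then induct \emph{upward}, covering $B(x,2^{m}r)$ by $n(2^{m-1}r)$ balls of radius $2^{m-1}r$; an arbitrary $s$-ball is handled by embedding it in $B(x,2^{m}r)$ for suitable $m$ rather than by halving $s$ itself. Everything else in your argument --- part (a), the coarse inverse $\pi$, and the deduction of (b) from the multiplicity count $|Y\cap B(x,2r)|$ --- matches the paper's proof.
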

\begin{proof}
a. Obviously, every space of bounded geometry is doubling.\\
b) and c). Suppose there is $M > 0$ such that for every $r > M$ there is a natural number $n(r)$ such that every $2r$-ball can be covered by at most $n(r)$ many $r$-balls.\\
Assume $r > 2M$.
Choose a maximal subset $Y=\{x_s\}_{s\in S}$ of $X$ with the property that $d(x_s,x_t)\ge r$ for each $s\ne t$ in $S$. Given $x\in X$ consider $T=\{s\in S | x_s\in B(x,2r)\}$.
Notice $|T|\leq n(\frac{r}{2})\cdot n(r)$ as otherwise $B(x,2r)$ cannot be covered by a set of $\frac{r}{2}$-balls containing at most
$n(\frac{r}{2})\cdot n(r)$ elements (that would result in two elements $x_s,x_t$, $s,t\in T$, to end up in the same element of the cover). That means the horizon
of $x$ in $\{B(x_s,2r)\}_{s\in S}$ contains at most $n(\frac{r}{2})\cdot n(r)$ elements and $X$ is large scale finitistic due to $\Leb(\{B(x_s,2r)\}_{s\in S})\ge r$.\\
Use $Y$ as above for $r=2M+1$. Put $r(m)=2^{m-1}\cdot r$ for $m\ge 1$.
Notice that $B(x,r(m+1))\cap Y$ contains at most $n(\frac{r}{2})\cdot n(r)\cdot\ldots \cdot n(r(m))$ points for all $m\ge 1$. For $m=1$ it has been just proved. For general $m$ it follows by induction.
\end{proof}

\begin{Proposition}\label{CoarselyDoublingChar}
$X$ is coarsely doubling if and only if there is $R > 0$ and a function $N:(0,\infty)\to \mathbb{N}$ such that any $s$-ball can be covered by at most $N(s)$ balls of radius $R$.
\end{Proposition}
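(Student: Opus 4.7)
The plan is to prove both directions by elementary manipulations; this is essentially a statement that one can trade off between the fixed cover scale $R$ and the scale $r$ appearing in the doubling condition. I will handle the trivial direction first and then iterate the doubling property for the forward direction.

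For the direction $(\Leftarrow)$, assume such $R$ and $N$ exist. Set $M:=R$. Given any $r > M$, the ball $B(x,2r)$ is covered by at most $N(2r)$ balls of radius $R$. Since $R < r$, each such ball of radius $R$ is contained in the concentric ball of radius $r$. Hence $B(x,2r)$ is covered by at most $N(2r)$ balls of radius $r$, and we may take $n(r):=N(2r)$. This shows $X$ is coarsely doubling.

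For the direction $(\Rightarrow)$, assume $X$ is coarsely doubling with constant $M$ and function $n$. Choose any $R > M$ (e.g.\ $R=2M+1$). If $s\le R$, any $s$-ball is contained in a single ball of radius $R$, so $N(s):=1$ works. For $s > R$, let $k$ be the least positive integer with $2^{k}R\ge s$, so that $B(x,s)\subset B(x,2^{k}R)$. Now iterate the doubling property $k$ times: $B(x,2^{k}R)$ is a ball of radius $2\cdot 2^{k-1}R$, hence is covered by at most $n(2^{k-1}R)$ balls of radius $2^{k-1}R$; each of these is then covered by at most $n(2^{k-2}R)$ balls of radius $2^{k-2}R$; and so on, finishing after $k$ steps with balls of radius $2^{0}R=R$. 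At every step the radius $2^{i}R$ used as the argument of $n$ satisfies $2^{i}R\ge R > M$, so the doubling hypothesis indeed applies. Setting
$$N(s):=\prod_{i=0}^{k-1}n(2^{i}R)$$
(with $k=k(s):=\lceil\log_2(s/R)\rceil$) gives the required function.

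The only point requiring care is making sure that each scale at which the doubling property is invoked exceeds the threshold $M$; this is precisely what forces the choice $R > M$ and explains why no uniformity is lost in the iteration. Apart from this, both directions are completely routine.
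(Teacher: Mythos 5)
Your proof is correct and follows essentially the same route as the paper: the easy direction by setting $M=R$ and $n(r)=N(2r)$, and the forward direction by iterating the doubling property along the dyadic scales $2^iR$ with $R>M$ to get $N(s)=\prod_{i=0}^{k-1}n(2^iR)$. You simply spell out the details (the case $s\le R$ and the choice of $k$) that the paper leaves implicit.
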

\begin{proof}
The condition in the statement of the proposition is clearly stronger than $X$ being coarsely doubling. Indeed, put $M=R$ and $n(r)=N(2r)$.

Suppose there is $M > 0$ with the property that for every $r > M$ there is a natural number $n(r)$ such that every $2r$-ball can be covered by at most $n(r)$ set of $r$-balls. Put $R=M+1$ and notice that every ball of radius $s=2^k\cdot R$, $k\ge 2$, can be covered by at most $n(R)\cdot n(2R)\cdot \ldots \cdot n(2^{k-1}R)$ balls of radius $R$.
\end{proof}

\begin{Corollary}\label{CoarselyDoublingSubspace}
A subspace of a coarsely doubling space is coarsely doubling.
\end{Corollary}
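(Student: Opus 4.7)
The plan is to use the characterization from Proposition~\ref{CoarselyDoublingChar} rather than Definition~\ref{CoarselyDoublingDef} directly, since that characterization is much better behaved under passing to subspaces: it only asks for balls in the subspace to be covered by boundedly many balls of some fixed radius, with no constraint that the covering balls be centered at points of the ambient space or that their radii be exactly half of the ball being covered.

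So let $Y \subseteq X$ with $X$ coarsely doubling, and apply \ref{CoarselyDoublingChar} to $X$ to obtain $R > 0$ and $N:(0,\infty)\to \mathbb{N}$ such that every $s$-ball in $X$ can be covered by at most $N(s)$ balls in $X$ of radius $R$. Given $y\in Y$ and $s>0$, the $Y$-ball $B_Y(y,s)$ equals $B_X(y,s)\cap Y$, so it is covered by at most $N(s)$ balls $B_X(x_1,R),\dots,B_X(x_k,R)$ from $X$.

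For each index $i$ such that $B_X(x_i,R)\cap Y \ne \emptyset$, pick a point $y_i \in B_X(x_i,R)\cap Y$; by the triangle inequality, $B_X(x_i,R)\cap Y \subseteq B_Y(y_i, 2R)$, and indices $i$ for which $B_X(x_i,R)\cap Y$ is empty contribute nothing to the cover. Thus $B_Y(y,s)$ is covered by at most $N(s)$ balls in $Y$ of radius $2R$. Taking $R' := 2R$ and $N'(s) := N(s)$, the characterization \ref{CoarselyDoublingChar} applies to $Y$, so $Y$ is coarsely doubling.

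There is no real obstacle here: the only subtlety is that one should not try to work with the original definition, where the radii are coupled (every $2r$-ball covered by $r$-balls), because recentering forces a loss of a factor of $2$ in the covering radius and would break that coupling. Routing the argument through Proposition~\ref{CoarselyDoublingChar} completely sidesteps this.
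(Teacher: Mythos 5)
Your proof is correct and is essentially identical to the paper's: both route the argument through Proposition~\ref{CoarselyDoublingChar}, cover an $s$-ball of $Y$ by at most $N(s)$ ambient balls of radius $R$, recenter each nonempty intersection at a point of $Y$ at the cost of doubling the radius to $2R$, and invoke the characterization again for $Y$. No differences worth noting.
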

\begin{proof}
Suppose there is $R > 0$ and a function $N:(0,\infty)\to \mathbb{N}$ such that any $s$-ball in $X$ can be covered by at most $N(s)$ balls of radius $R$. If $Y\subset X$ notice every $s$-ball in $Y$ can be covered by at most $N(s)$ balls of radius $2R$. Indeed, given a set of at most $N(s)$ balls $B_i$ in $X$ of radius $R$ covering a ball centered in $Y$ of radius $s$, the corresponding $s$-ball in $Y$ is covered by balls in $Y$ of radius $2R$ centered at points in $B_i\cap Y$ provided that intersection is non-empty (otherwise that ball is discarded).
\end{proof}

\begin{Corollary}\label{CoarselyDoublingInvariant}
Being coarsely doubling is a coarse invariant.
\end{Corollary}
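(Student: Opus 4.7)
The plan is to leverage the characterization in Proposition~\ref{CoarselyDoublingChar}, which expresses the coarsely doubling property as a single-scale condition on the covering function, and to transport that condition through the control functions of a coarse equivalence. Let $f\colon X\to Y$ be a coarse equivalence; in particular $f$ is a coarse embedding, so there exist non-decreasing $\rho_-,\rho_+\colon[0,\infty)\to[0,\infty)$ with $\rho_-(t)\to\infty$ as $t\to\infty$ such that
$$\rho_-(d_X(x,x'))\le d_Y(f(x),f(x'))\le \rho_+(d_X(x,x'))$$
for all $x,x'\in X$. Assume first that $Y$ is coarsely doubling and fix $R_Y>0$ and $N_Y\colon(0,\infty)\to\mathbb{N}$ provided by \ref{CoarselyDoublingChar}.

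Given $s>0$ and $x\in X$, observe that $f(B_X(x,s))\subset B_Y(f(x),\rho_+(s))$, so the latter $Y$-ball can be covered by at most $N_Y(\rho_+(s))$ balls $B_Y(y_i,R_Y)$. Pulling these balls back to $X$ yields a cover of $B_X(x,s)$ by the sets $f^{-1}(B_Y(y_i,R_Y))\cap B_X(x,s)$. The key step is to bound the $X$-diameter of each preimage: set
$$D:=\sup\{t\ge 0:\rho_-(t)\le 2R_Y\},$$
which is finite because $\rho_-(t)\to\infty$. If $x_1,x_2\in f^{-1}(B_Y(y,R_Y))$ then $d_Y(f(x_1),f(x_2))<2R_Y$, hence $\rho_-(d_X(x_1,x_2))<2R_Y$, forcing $d_X(x_1,x_2)\le D$. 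Each of the $N_Y(\rho_+(s))$ preimages therefore lies in an $X$-ball of radius $D$ (centered at any of its points), giving a cover of $B_X(x,s)$ by $N_Y(\rho_+(s))$ balls of radius $D$. Proposition~\ref{CoarselyDoublingChar} applied with $R:=D$ and $N(s):=N_Y(\rho_+(s))$ now yields that $X$ is coarsely doubling.

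The reverse implication is symmetric: a coarse equivalence admits a coarse inverse $g\colon Y\to X$ with analogous control functions, so the same argument applied to $g$ shows that if $X$ is coarsely doubling then so is $Y$. The only mildly nontrivial point is the finiteness of $D$; this rests on the defining hallmark of a coarse embedding (as opposed to a merely coarse map), namely $\rho_-(t)\to\infty$, which forces preimages of bounded sets to have bounded $X$-diameter. In fact the argument shows slightly more than stated, namely that being coarsely doubling is inherited by any space that coarsely embeds into a coarsely doubling space, which is the precise reason the property is a coarse invariant.
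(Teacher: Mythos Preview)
Your proof is correct and follows essentially the same approach as the paper: both invoke the single-scale characterization of Proposition~\ref{CoarselyDoublingChar}, push an $s$-ball in $X$ forward to a $\rho_+(s)$-ball in $Y$, cover it there by $R$-balls, and then use the lower control function to bound the $X$-diameter of each preimage. The paper handles the two directions by reducing to a surjective coarse equivalence (tacitly using Corollary~\ref{CoarselyDoublingSubspace}), whereas you invoke a coarse inverse; your observation that the argument in fact only uses a coarse embedding, so that the property is inherited under coarse embeddings, is a nice bonus.
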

\begin{proof}
It suffices to consider a coarse equivalence $f:X\to Y$ that is a surjection and $Y$ is coarsely doubling. Suppose there is $R > 0$ and a function $N:(0,\infty)\to \mathbb{N}$ such that any $s$-ball in $Y$ can be covered by at most $N(s)$ balls of radius $R$. 

Pick $D > 0$ such that $d_Y(f(x),f(y)) < R$ implies $d_X(x,y) < D$. 
Pick a function $\alpha:(0,\infty)\to (0,\infty)$ with the property that $d_Y(f(x),f(y)) < \alpha(d_X(x,y))$. Given any ball $B$ in $X$ of radius $s$, its image $f(B)$ is contained in an $\alpha(s)$-ball and can be covered by at most $N(\alpha(s))$ balls $B_i$ in $Y$ of radius $R$. Since each $f^{-1}(B_i)$ is contained in a $D$-ball in $X$, $B$ can be covered by at most $N(\alpha(s))$ balls in $X$ of radius $D$.
\end{proof}

\begin{Corollary}\label{CoarselyDoublingLattice}
The class of coarsely doubling spaces is identical with the class of spaces that have a quasi-lattice.
\end{Corollary}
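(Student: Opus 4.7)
The plan is to prove the two directions by simply combining earlier results of this section. First, I need to fix what ``quasi-lattice'' means in this context: reading Proposition \ref{CoarselyDoublingProperties}(c) as the motivating statement, a quasi-lattice of $X$ is a subspace $Y\subset X$ of bounded geometry such that the inclusion $Y\hookrightarrow X$ is a coarse equivalence.

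For the forward direction, assume $X$ is coarsely doubling. Then Proposition \ref{CoarselyDoublingProperties}(c) directly produces such a subspace $Y$: it is of bounded geometry and the inclusion into $X$ is a coarse equivalence. Hence $X$ has a quasi-lattice, and no additional work is needed here.

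For the reverse direction, assume $X$ has a quasi-lattice $Y$. Since $Y$ has bounded geometry, Proposition \ref{CoarselyDoublingProperties}(a) says $Y$ is doubling, and any doubling space is trivially coarsely doubling (take $M$ arbitrary). Now invoke Corollary \ref{CoarselyDoublingInvariant}: being coarsely doubling is a coarse invariant, and the inclusion $Y\hookrightarrow X$ is a coarse equivalence, so $X$ is coarsely doubling as well.

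There is no real obstacle here; the work has already been done in \ref{CoarselyDoublingProperties} and \ref{CoarselyDoublingInvariant}, and this corollary is just the packaging. The only subtlety worth flagging is making the definition of ``quasi-lattice'' explicit so that both implications are using the same notion (bounded geometry subspace plus coarse equivalence of the inclusion).
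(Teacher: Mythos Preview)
Your proof is correct and essentially the same as the paper's: both directions are handled by combining Proposition~\ref{CoarselyDoublingProperties} with Corollary~\ref{CoarselyDoublingInvariant}. The only cosmetic difference is that the paper records the Block--Weinberger definition of a quasi-lattice (a $c$-dense subset of bounded geometry) rather than your coarse-equivalence formulation, but the two notions coincide and the argument is otherwise identical.
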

\begin{proof}
A \textbf{quasi-lattice} (see \cite{BW}) is a subset $\Gamma$ of $X$ that is $c$-dense for some $c > 0$ and $\Gamma$ is of bounded geometry. By \ref{CoarselyDoublingProperties} every coarsely doubling space has a quasi-lattice.

Assume $X$ has a quasi-lattice $\Gamma$.
By \ref{CoarselyDoublingProperties} $\Gamma$ is coarsely doubling, and by
\ref{CoarselyDoublingInvariant} $X$ is coarsely doubling.
\end{proof}

The following theorem generalizes known results on Property A for spaces of bounded geometry (see \cite{Willett}, \cite{Guen}) and spaces of finite asymptotic dimension (see \cite{CenDyVav1} and \cite{CenDyVav2}).

\begin{Theorem}\label{PropAForFinitistic}
A large scale finitistic metric space $X$ has strong Property A if and only if it is large scale paracompact.
\end{Theorem}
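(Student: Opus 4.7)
The forward direction is immediate from the definitions. If $X$ has strong Property A then, given $\mu>0$, the witness $f\colon X\to l_1(V)$ supplied by Definition~\ref{NewStrongPropADef} with $\epsilon=\mu$ is a simplicial (because barycentric), cobounded, $(\mu,\mu)$-Lipschitz partition of unity, and the condition $\Leb(f)\ge 1/\mu$ is exactly the statement that the cover induced by $f$ coarsens the cover by $(1/\mu)$-balls; this is the definition of large scale paracompactness. Note that the finitistic hypothesis plays no role here.

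For the converse, fix $\epsilon>0$ and pick an auxiliary parameter $\epsilon_0$ to be chosen small in terms of $\epsilon$. Large scale paracompactness supplies a simplicial, cobounded, $(\epsilon_0,\epsilon_0)$-Lipschitz partition of unity $f\colon X\to l_1(V)$ with $\Leb(f)\ge 1/\epsilon_0$. The finitistic hypothesis supplies an $M$-cobounded cover $\mathcal{U}=\{U_s\}_{s\in S}$ of Lebesgue number at least $1/\epsilon$ and point-multiplicity at most some integer $n$. Choose $x_s\in U_s$ for each $s$ and form the averaged partition of unity
\[ h(x)=\frac{1}{|T(x)|}\sum_{s\in T(x)}f(x_s),\qquad T(x)=\{s\colon x\in U_s\}. \]
Running the calculation in the proof of Theorem~\ref{MainThmOnLSParacompactness} and using $|T(x)|\le n$, one checks that for $\epsilon_0$ small enough relative to $\epsilon$ and $M$ the map $h$ is a cobounded $(\epsilon,\epsilon)$-Lipschitz partition of unity whose Lebesgue number is at least $1/\epsilon$: any $y\in B(x,1/\epsilon)$ lies in the same $U_{s_0}\supset B(x,1/\epsilon)$, so $s_0\in T(y)$ and $h(y)(v)>0$ for any fixed $v\in\text{supp}(f(x_{s_0}))$.

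The crucial step is upgrading $h$ to a barycentric partition of unity via Proposition~\ref{ExpansionToBaryPU}, whose hypothesis demands that $h$ be the normalization of an integer-valued function of constant norm. Using the finitistic hypothesis a second time at a scale comparable to the cobound of $f$, and contracting the vertex set $V$ along a suitable map $V\to J$ built as in Proposition~\ref{NewPartition} (with the Lipschitz estimate of Lemma~\ref{LipschitzOfContraction}), together with Lemma~\ref{PUReductionToSimplicial} applied to truncate each $f(x_s)$, one arranges that $|\text{supp}(h(x))|\le N$ for some integer $N$ independent of $x$. Granted this uniform bound, the rounding recipe $F(x)(v)=\lceil Kh(x)(v)\rceil$ on $\text{supp}(h(x))$, with a one-vertex adjustment making $\|F(x)\|=K+N$ constant in $x$, produces for $K$ large enough a partition of unity $h':=F/(K+N)$ that is within $O(N/K)$ of $h$ in $l_1$-norm, shares the same carrier cover as $h$ (hence the same Lebesgue number), and is $(\epsilon,\epsilon)$-Lipschitz once the Lipschitz slack of $h$ absorbs the rounding error. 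Proposition~\ref{ExpansionToBaryPU} applied to $h'$ then produces a barycentric expansion $g$ that is cobounded, $(\epsilon,\epsilon)$-Lipschitz, and has $\Leb(g)=\Leb(h')\ge 1/\epsilon$, which is exactly strong Property A at scale $\epsilon$.

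The main obstacle is establishing the uniform bound on $|\text{supp}(h(x))|$. The finitistic hypothesis only furnishes a cover with bounded multiplicity at a single prescribed scale, whereas the support of $h(x)$ is controlled by the supports of $f$ at the discrete points $\{x_s\colon s\in T(x)\}$; these supports are a priori only known to be finite. I expect this to be handled by iterating the finitistic hypothesis at two carefully chosen scales and invoking the vertex-contraction machinery of Proposition~\ref{NewPartition}, in the spirit of the bounded-geometry and finite-asymptotic-dimension precursors cited just before the theorem. Once that bound is secured, the rounding and expansion steps are routine.
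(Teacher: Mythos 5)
Your overall architecture (get a uniform bound on supports, round to an integer-valued function of constant norm, then expand via Proposition~\ref{ExpansionToBaryPU}) matches the paper's, and your forward direction is fine. But the converse has a genuine gap exactly where you flag it: the uniform bound on $|\mathrm{supp}\,(h(x))|$ is not "expected to be handled" by the tools you cite --- it is the whole content of the finitistic hypothesis, and the mechanism you propose would not deliver it. Proposition~\ref{NewPartition} is only an injective re-indexing of the vertex set and does not change the cardinality of any carrier, and Lemma~\ref{PUReductionToSimplicial} truncates each $f(x)$ to a \emph{finite} support with no uniform bound across $x$. Your averaging construction makes matters worse, since $\mathrm{supp}\,(h(x))=\bigcup_{s\in T(x)}\mathrm{supp}\,(f(x_s))$ is a union of finitely many finite sets of a priori unbounded size. (The averaging step is also unnecessary: Definition~\ref{LSParaDef} already guarantees $\Leb(f)\ge 1/\mu$, so you do not need to re-manufacture the Lebesgue number.)

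The missing idea is a single, direct use of the finitistic hypothesis at a scale exceeding the cobound of $f$: this produces a uniformly bounded cover $\{U_s\}_{s\in S}$ of multiplicity at most $n+1$ that \emph{coarsens the carrier cover of $f$ itself}. Choosing $\alpha\colon V\to S$ with $f^{-1}(\st(v))\subset U_{\alpha(v)}$ and contracting, $g=\alpha_\ast\circ f$ satisfies $g^{-1}(\st(s))\subset U_s$, so the carrier of each $g(x)$ lies in $\{s : x\in U_s\}$ and hence has at most $n+1$ elements; by Lemma~\ref{LipschitzOfContraction} the contraction preserves the Lipschitz constant, coboundedness, and Lebesgue number. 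With that $n$-dimensionality in hand, your rounding step goes through (the paper takes $m\ge \frac{2(n+1)}{\epsilon}+(n+1)(n+2)$ and splits $mg=G_1+G_2$ with $\|G_2(x)\|=m$ constant and $\|G_1(x)\|\le 2n+2$, being slightly careful that the one-vertex adjustment of size up to $n+1$ can be absorbed by a vertex carrying enough mass), and Proposition~\ref{ExpansionToBaryPU} finishes as you describe. So the repair is short, but as written your argument defers its only nontrivial step to machinery that cannot supply it.
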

\begin{proof} As can be seen from \ref{NewStrongPropADef} strong Property A implies large scale paracompactness, so only one direction in the proof is of interest.

Suppose $X$ is large scale finitistic and large scale paracompact. Given $\epsilon > 0$
we choose an $(\epsilon,\epsilon)$-Lipschitz simplicial partition of unity $f:X\to l_1(V)$ that is
cobounded and $\Leb(f)\ge \frac{1}{\epsilon}$.\\
Pick a uniformly bounded cover $\{U_s\}_{s\in S}$ of multiplicity at most $n+1$ which is a coarsening of the cover of $X$ induced by $f$. Let $\alpha:V\to S$ be defined so that
$$f^{-1}(\st(v))\subset U_{\alpha(v)}$$
for each $v\in V$. We may assume $\alpha$ is surjective by removing elements $U_t$ of the cover $\{U_s\}_{s\in S}$ such that $t\in S\setminus \alpha(V)$.

Consider the contraction $g=\alpha_{\ast}\circ f:X\to l_1(S)$ of $f$. Notice it is $(\epsilon,\epsilon)$-Lipschitz by \ref{LipschitzOfContraction}, cobounded, $\Leb(g)\ge \frac{1}{\epsilon}$, and $g$ is $n$-dimensional (in view of $g^{-1}(\st(s))\subset U_s$ for each $s\in S$).

Find a natural number $m\ge \frac{2(n+1)}{\epsilon}+(n+1)\cdot (n+2)$. Consider $G=m\cdot g$ and
express it as the sum $G_1+G_2$, where $G_2\ge 0$ is integer-valued, $G_2(x)(v) > 0$
iff $G(x)(v) > 0$, $\| G_2(x)\|=m$ for each $x\in X$, and $\| G_1(x)\|\leq 2n+2$ for each $x\in X$. The way to do it is to set initially $G_1(x)(v)$ to be equal to $G(x)(v)-1$ if $0 < G(x)(v) < 1$, $G_1(x)(v)=0$ if $G(x)(v)=0$, and $G_1(x)(v)=G(x)(v) -\lfloor G(x)(v)\rfloor$
if $G(x)(v) \ge 1$ (here $\lfloor x\rfloor$ is the integer part of $x$).

Let $k(x)=\sum\limits_{v\in V} G_1(x)(v)$ and $G_2=G-G_1$. Notice $k(x)$ is an integer-valued function and $|k(x)|$< n+1.
For every $x\in X$ proceed as follows. If $k(x)<0$, then there is $w\in V$ with $G_2(x)(w)>|k(x)|$,
in which case we assign $G_1(x)(w)-k(x)$ as the new $G_1(x)(w)$ and we assign $G_2(x)(w)+k(x)$ as
the new $G_2(x)(w)$. If $k(x)\geq 0$, then we pick any $w\in V$ such that $G(x)(w)>0$ and we
assign $G_1(x)(w)-k(x)$ as the new $G_1(x)(w)$ and we assign $G_2(x)(w)+k(x)$ as the new $G_2(x)(w)$.

Let $h:X\to l_1(S)$ be the normalization of $G_2$. Notice $|h(x)-g(x)|\leq \frac{2n+2}{m} < \epsilon$ for each $x\in X$, so $h$ is $(2\epsilon,2\epsilon)$-Lipschitz. Also, it induces the same cover of $X$ as $g$ does which implies $h$ is cobounded and $\Leb(h)\ge \frac{1}{\epsilon}$.
The function $h$ can be expanded to a barycentric partition of unity $p$ by \ref{ExpansionToBaryPU} that is $(2\epsilon,2\epsilon)$-Lipschitz, is cobounded and $\Leb(p)\ge \frac{1}{\epsilon}$.
\end{proof}

\section{Coarse amenability}\label{CoarseAmenabilitySection}

In this section we investigate properties of coarsely amenable spaces. The main result of this section is that (on the class of coarsely doubling spaces) coarse amenability, Property A, and large scale paracompactness are all equivalent.

\begin{Observation}
As in \ref{LSWeakParaIsAnInvariant} one can show that if
 $X$ coarsely embeds in a coarsely amenable space $Y$, then $X$ is coarsely amenable.
\end{Observation}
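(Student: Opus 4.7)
The plan is to mirror the proof of Proposition~\ref{LSWeakParaIsAnInvariant}, replacing its Lebesgue/point-finiteness arguments with the two horizon comparisons packaged into the definition of coarse amenability. Let $f:X\to Y$ be a coarse embedding with $Y$ coarsely amenable, and fix $s>r>0$ in $X$ together with $\epsilon>0$.

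First I would use the defining property of a coarse embedding to pick companion scales $r'<s'$ in $Y$ such that
$$d_X(a,b)<s\ \Longrightarrow\ d_Y(f(a),f(b))<s'\qquad\text{and}\qquad d_Y(f(a),f(b))<r'\ \Longrightarrow\ d_X(a,b)<r.$$
The inequality $r'<s'$ can always be arranged because the lower and upper control functions $\rho_-\le\rho_+$ of $f$ satisfy $\rho_-(r)\le\rho_+(s)$. Next, I would apply coarse amenability of $Y$ at the triple $(s',r',\epsilon)$ to obtain a uniformly bounded cover $\mathcal{U}$ of $Y$ for which $|\hor(B_Y(y,s'),\mathcal{U})|$ is finite for each $y\in Y$ and the $Y$-ratio $|\hor(B_Y(y,r'),\mathcal{U})|/|\hor(B_Y(y,s'),\mathcal{U})|$ exceeds $1-\epsilon$. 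The pullback $\mathcal{V}:=\{f^{-1}(U):U\in\mathcal{U}\}$ is a uniformly bounded cover of $X$, since preimages of bounded sets under a coarse embedding are bounded.

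The heart of the argument is the transfer of horizons through $f$. The first implication above gives $f(B_X(x,s))\subseteq B_Y(f(x),s')$, whence
$$|\hor(B_X(x,s),\mathcal{V})|\le|\hor(B_Y(f(x),s'),\mathcal{U})|,$$
providing finiteness and an upper bound on the denominator in $X$. The second implication yields the opposite comparison for the numerator: if $U\in\mathcal{U}$ meets $B_Y(f(x),r')$ at some $z=f(y)\in f(X)$, then $d_Y(f(x),z)<r'$ forces $d_X(x,y)<r$, so $y\in f^{-1}(U)\cap B_X(x,r)$ and therefore
$$|\hor(B_X(x,r),\mathcal{V})|\ge|\hor(B_Y(f(x),r'),\mathcal{U})|.$$
Dividing these two inequalities transfers the $Y$-ratio exceeding $1-\epsilon$ to the required $X$-ratio, witnessing coarse amenability of $X$ at $(s,r,\epsilon)$.

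The main obstacle lies in the second horizon comparison, because a priori the witness point $z\in U\cap B_Y(f(x),r')$ need not belong to $f(X)$. I would handle this exactly as in Proposition~\ref{LSWeakParaIsAnInvariant}: discard from $\mathcal{U}$ every element disjoint from $f(X)$ (these pull back to $\emptyset$ and cannot influence $\mathcal{V}$-horizons), and, if required, further shrink $r'$ by a constant depending on the diameter of $\mathcal{U}$, invoking the monotonicity of the coarse-amenability ratio in the outer scale $s$ to reduce the claim to the case where $s$ is large enough to absorb that constant. With this bookkeeping, both horizon inequalities hold and their quotient is the required ratio.
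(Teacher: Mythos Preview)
Your numerator comparison has a genuine gap. You want every $U\in\mathcal{U}$ that meets $B_Y(f(x),r')$ to have $f^{-1}(U)$ meeting $B_X(x,r)$, but the witness $z\in U\cap B_Y(f(x),r')$ need not lie in $f(X)$, and neither of your proposed fixes repairs this. Discarding the $U$'s disjoint from $f(X)$ does nothing: such a $U$ may meet $f(X)$ far from $f(x)$ while every point of $U\cap B_Y(f(x),r')$ lies outside $f(X)$. The second fix---shrinking $r'$ by $\diam(\mathcal{U})$---is circular, since the cover $\mathcal{U}$ (and hence its diameter) is only produced after $r',s'$ have been fixed; there is no a~priori bound to feed back into the choice of $r'$. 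Proposition~\ref{LSWeakParaIsAnInvariant}, which you invoke for these maneuvers, performs neither of them: there only an \emph{upper} bound on a horizon is required, and that is precisely the direction that does pull back cleanly.

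The clean repair is to replace the inner ball by a single point, i.e.\ to work with condition~(c) of Proposition~\ref{CharOfStrongPropA}. Given $r>0$ and $\epsilon>0$ in $X$, choose $r'$ with $d_X(a,b)<r\Rightarrow d_Y(f(a),f(b))<r'$, let $\mathcal{U}$ witness condition~(c) for $Y$ at $(r',\epsilon)$, and set $\mathcal{V}=f^{-1}(\mathcal{U})$. Now the numerator is an exact equality, $|\hor(x,\mathcal{V})|=|\hor(f(x),\mathcal{U})|$, since $f(x)\in U$ iff $x\in f^{-1}(U)$; and the denominator satisfies $|\hor(B_X(x,r),\mathcal{V})|\le |\hor(B_Y(f(x),r'),\mathcal{U})|$ because $f(B_X(x,r))\subset B_Y(f(x),r')$. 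Dividing gives the required ratio in $X$ without ever needing to locate a witness inside $f(X)$.
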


\begin{Proposition}\label{CharOfStrongPropA}
The following conditions are equivalent for every metric space $X$;\\
a. $X$ is coarsely amenable.\\
b. For each $r > 0$ and each $\epsilon > 0$ there is a uniformly bounded cover
$\mathcal{U}$ of $X$ such that for each $x\in X$ the horizon $\hor(B(x,2r),\mathcal{U})$ is finite and
$$\frac{|\hor(B(x,r),\mathcal{U})|}{|\hor( B(x,2r),\mathcal{U})|} > 1- \epsilon.$$
c. For each $r > 0$ and each $\epsilon > 0$ there is a uniformly bounded cover
$\mathcal{U}$ of $X$ such that for each $x\in X$ the horizon $\hor(B(x,r),\mathcal{U})$ is finite and
$$\frac{|\hor(x,\mathcal{U})|}{|\hor( B(x,r),\mathcal{U})|} > 1- \epsilon.$$
d. For each $s > r > 0$ and each $M,\epsilon > 0$ there is a uniformly bounded cover
$\mathcal{U}$ of $X$ of Lebesgue number at least $M$ such that for each $x\in X$ the horizon $\hor(B(x,s),\mathcal{U})$ is finite and
$$\frac{|\hor(B(x,r),\mathcal{U})|}{|\hor( B(x,s),\mathcal{U})|} > 1- \epsilon.$$
\end{Proposition}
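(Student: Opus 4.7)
The plan is to close the cycle a) $\Rightarrow$ b) $\Rightarrow$ c) $\Rightarrow$ a) and separately prove a) $\Leftrightarrow$ d). Two of these implications are immediate: a) $\Rightarrow$ b) is the case $s=2r$, and d) $\Rightarrow$ a) just discards the Lebesgue-number hypothesis. The three remaining implications all reduce to a single tool, the \emph{$M$-thickening} of a cover.

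Given a uniformly bounded cover $\mathcal{U}$ of $X$ and $M>0$, I form the indexed family $\mathcal{U}^M:=\{B(U,M)\}_{U\in\mathcal{U}}$ (entries kept distinct even if they coincide as sets). I will use three properties: (i) $\mathcal{U}^M$ is uniformly bounded whenever $\mathcal{U}$ is; (ii) $\Leb(\mathcal{U}^M)\ge M$, since any $x$ lies in some $U\in\mathcal{U}$ and hence $B(x,M)\subseteq B(U,M)$; (iii) under the identification $U\leftrightarrow B(U,M)$, for every $y\in X$ and every $t>0$,
\[
\hor(B(y,t),\mathcal{U})\ \subseteq\ \hor(B(y,t),\mathcal{U}^M)\ \subseteq\ \hor(B(y,t+M),\mathcal{U}),
\]
with the additional exact identity $|\hor(y,\mathcal{U}^M)|=|\hor(B(y,M),\mathcal{U})|$. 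The left inclusion holds because any $z\in U\cap B(y,t)$ already lies in $B(U,M)\cap B(y,t)$; the right inclusion because any $z\in B(U,M)\cap B(y,t)$ supplies $u\in U$ with $d(u,z)<M$, forcing $d(u,y)<t+M$; the identity because $y\in B(U,M)\Leftrightarrow d(y,U)<M\Leftrightarrow B(y,M)\cap U\ne\emptyset$.

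Each remaining implication is then a one-line computation. For b) $\Rightarrow$ c), apply b) at $(r,\epsilon)$ to obtain $\mathcal{V}$ and set $\mathcal{V}':=\mathcal{V}^r$; combining (iii) at $t=M=r$ with the identity at $M=r$ yields
\[
\frac{|\hor(y,\mathcal{V}')|}{|\hor(B(y,r),\mathcal{V}')|}\ \ge\ \frac{|\hor(B(y,r),\mathcal{V})|}{|\hor(B(y,2r),\mathcal{V})|}\ >\ 1-\epsilon.
\]
For c) $\Rightarrow$ a), apply c) at $(s,\epsilon)$ and use $\hor(y,\mathcal{V})\subseteq\hor(B(y,r),\mathcal{V})\subseteq\hor(B(y,s),\mathcal{V})$ to read off the ratio. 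For a) $\Rightarrow$ d), given $(s>r,M,\epsilon)$, apply a) at the pair $(r,s+M)$ with $\epsilon$ to obtain $\mathcal{U}$ and set $\mathcal{U}':=\mathcal{U}^M$; property (ii) gives $\Leb(\mathcal{U}')\ge M$, and (iii) yields
\[
\frac{|\hor(B(y,r),\mathcal{U}')|}{|\hor(B(y,s),\mathcal{U}')|}\ \ge\ \frac{|\hor(B(y,r),\mathcal{U})|}{|\hor(B(y,s+M),\mathcal{U})|}\ >\ 1-\epsilon.
\]

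There is no serious obstacle in this argument; the only subtle point worth flagging is that in a general (non-geodesic) metric space the relation $B(y,t)\cap B(U,M)\ne\emptyset$ is strictly stronger than $d(y,U)<t+M$, which is precisely what forces the ``extra $+M$'' on the right-hand side of (iii) and, correspondingly, why we must apply a) at the inflated pair $(r,s+M)$ rather than at $(r,s)$ when deducing d).
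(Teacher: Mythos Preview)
Your proof is correct and follows essentially the same approach as the paper: both arguments hinge on the $M$-thickening $\mathcal{U}\mapsto\{B(U,M)\}_{U\in\mathcal U}$ and the two-sided inclusion $\hor(B(y,t),\mathcal{U})\subseteq\hor(B(y,t),\mathcal{U}^M)\subseteq\hor(B(y,t+M),\mathcal{U})$ together with the identity $\hor(y,\mathcal{U}^M)=\hor(B(y,M),\mathcal{U})$. The only difference is organizational: the paper runs a single cycle a)$\Rightarrow$b)$\Rightarrow$c)$\Rightarrow$d)$\Rightarrow$a), whereas you close a)$\Rightarrow$b)$\Rightarrow$c)$\Rightarrow$a) first and then handle a)$\Leftrightarrow$d) separately; your c)$\Rightarrow$a) step (monotonicity of horizons in the radius) is a trivial observation the paper absorbs into c)$\Rightarrow$d), and your a)$\Rightarrow$d) is the paper's c)$\Rightarrow$d) with that same trivial step pre-composed.
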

\begin{proof} a)$\implies$b) and d)$\implies$a) are obvious.\\
b)$\implies$c). Given $r > 0$ and $\epsilon > 0$ pick
a uniformly bounded cover
$\mathcal{V}=\{V_t\}_{t\in T}$ of $X$ such that for each $x\in X$ the horizon $\hor(B(x,2r),\mathcal{V})$ is finite and
$$\frac{|\hor(B(x,r),\mathcal{V})|}{|\hor( B(x,2r),\mathcal{V})|} > 1- \epsilon.$$
Define $U_t=B(V_t,r)$ and put $\mathcal{U}=\{U_t\}_{t\in T}$. Notice $B(x,r)\cap B(V_t,r)\ne \emptyset$ implies $B(x,2r)\cap V_t\ne\emptyset$. That means
$$\hor(B(x,r),\mathcal{U})\subset \hor(B(x,2r),\mathcal{V}).$$
Similarly, $\hor(x,\mathcal{U})=\hor(B(x,r),\mathcal{V})$. Therefore
$$\frac{|\hor(x,\mathcal{U})|}{|\hor( B(x,r),\mathcal{U})|} > 1- \epsilon.$$
c)$\implies$d). Suppose $s > r > 0$ and $M,\epsilon > 0$. Pick a uniformly bounded cover
$\mathcal{V}=\{V_t\}_{t\in T}$ of $X$ such that for each $x\in X$ the horizon $\hor(B(x,s+M),\mathcal{V})$ is finite and
$$\frac{|\hor(x,\mathcal{V})|}{|\hor( B(x,s+M),\mathcal{V})|} > 1- \epsilon.$$
Define $U_t=B(V_t,M)$ and put $\mathcal{U}=\{U_t\}_{t\in T}$. Notice $B(x,s)\cap B(V_t,M)\ne \emptyset$ implies $B(x,s+M)\cap V_t\ne\emptyset$. That means
$$\hor(B(x,s),\mathcal{U})\subset \hor(B(x,s+M),\mathcal{V}).$$
Since $\hor(x,\mathcal{V})\subset \hor(B(x,r),\mathcal{U})$,
$$\frac{|\hor(B(x,r),\mathcal{U})|}{|\hor( B(x,s),\mathcal{U})|} > 1- \epsilon.$$

\end{proof}

\begin{Proposition}\label{StrongPropAImpliesPropA}
Every coarsely amenable space $X$ has strong Property A.
\end{Proposition}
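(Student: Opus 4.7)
The plan is to convert the cover $\mathcal{U}$ guaranteed by coarse amenability directly into a barycentric partition of unity, using as carrier over each point $x$ the set of elements of $\mathcal{U}$ that meet the ball $B(x,r)$ for an appropriately chosen $r$. Given $\epsilon > 0$ (which we may take less than $1$), I would first choose parameters: set $r = (2-\epsilon)/\epsilon$ so that $r > 1/\epsilon$ and $\epsilon r + \epsilon \ge 2$, and pick $\epsilon' > 0$ small enough that $4\epsilon'/(1-\epsilon')^2 \le \epsilon$. Then I would apply Proposition~\ref{CharOfStrongPropA}(d) with these values of $r$, $s = 2r$, $M = 1$, and tolerance $\epsilon'$, obtaining a uniformly bounded cover $\mathcal{U}$ of $X$ of positive Lebesgue number such that each horizon $\hor(B(x,2r),\mathcal{U})$ is finite and
\[
\frac{|\hor(B(x,r),\mathcal{U})|}{|\hor(B(x,2r),\mathcal{U})|} > 1-\epsilon'
\qquad\text{for all } x \in X.
\]

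The candidate partition of unity is $f\colon X \to l_1(\mathcal{U})$ defined by $f(x) = \chi_{C(x)}/|C(x)|$, where $C(x) = \hor(B(x,r),\mathcal{U})$. Each $C(x)$ is non-empty (since $\mathcal{U}$ covers $X$) and finite (being contained in $\hor(B(x,2r),\mathcal{U})$), so $f$ is a barycentric partition of unity. Coboundedness is immediate because $f^{-1}(\st(U)) = B(U,r)$, which is uniformly bounded in $U \in \mathcal{U}$. For the Lebesgue number, positive Lebesgue number of $\mathcal{U}$ gives some $U \ni x$, and then $B(x,1/\epsilon) \subset B(U,r) = f^{-1}(\st(U))$ because $1/\epsilon < r$; hence $\Leb(f) \ge 1/\epsilon$.

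The main obstacle is verifying the $(\epsilon,\epsilon)$-Lipschitz condition, and this is where the ratio hypothesis is used. For pairs with $d(x,y) > r$, I would use only the trivial bound $\|f(x)-f(y)\| \le 2$, which by the choice of $r$ satisfies $2 \le \epsilon\, d(x,y) + \epsilon$. The delicate case is $d(x,y) \le r$: by the triangle inequality both $C(x)$ and $C(y)$ are contained in $C'(x) := \hor(B(x,2r),\mathcal{U})$ and in $C'(y) := \hor(B(y,2r),\mathcal{U})$. The bound $|C(x)| > (1-\epsilon')|C'(x)|$ together with $C(y) \subset C'(x)$ gives
\[
|C(y)\setminus C(x)| \le |C'(x)| - |C(x)| < \epsilon'\, |C'(x)|,
\]
and symmetrically for $|C(x)\setminus C(y)|$. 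Combining with $|C'(x)| < |C(x)|/(1-\epsilon')$ yields both $\max(|C(x)|,|C(y)|) < \min(|C(x)|,|C(y)|)/(1-\epsilon')$ and
\[
\frac{|C(x)\triangle C(y)|}{\min(|C(x)|,|C(y)|)} < \frac{2\epsilon'}{(1-\epsilon')^2}.
\]
An application of Lemma~\ref{BasicLemmaOnBaryPUs} then gives $\|f(x)-f(y)\| < 4\epsilon'/(1-\epsilon')^2 \le \epsilon \le \epsilon\, d(x,y) + \epsilon$, completing the estimate in this case.

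The key insight driving the construction is that the $r$ versus $2r$ ratio hypothesis forces the sizes $|C(x)|$ and $|C(y)|$ of nearby carriers to be comparable (not just their values); this is essential because otherwise the symmetric-difference-to-size ratio in Lemma~\ref{BasicLemmaOnBaryPUs} could blow up even when the symmetric difference is small relative to the ambient horizon. I expect the bookkeeping in this Lipschitz estimate — in particular the passage from $C'$-sized quantities to $C$-sized quantities — to be the only real obstacle; everything else (coboundedness, Lebesgue number, finiteness of carriers) follows routinely from the characterization in Proposition~\ref{CharOfStrongPropA}(d).
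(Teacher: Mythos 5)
Your proof is correct and follows essentially the same route as the paper: obtain a cover via the strengthened characterization of coarse amenability, form the barycentric partition of unity from horizons of $r$-balls, and convert the ratio hypothesis into a symmetric-difference bound fed into Lemma~\ref{BasicLemmaOnBaryPUs}. The only (harmless) difference is that the paper uses the larger horizon $\hor(B(x,2r),\mathcal{U})$ as the carrier, which makes the containment $D(x)\subset A(x)\cap A(y)$ immediate and avoids your $(1-\epsilon')^{2}$ bookkeeping, whereas you use the smaller horizon $\hor(B(x,r),\mathcal{U})$.
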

\begin{proof}
Given $\epsilon > 0$ consider $r,\mu > 0$ to be determined later and pick $\delta > 0$ so that
$$1+\mu > \frac{1}{1-\delta}.$$
Then pick
a uniformly bounded cover $\mathcal{U}$ of $X$ such that $\Leb(\mathcal{U})\ge 4r$ (see \ref{CharOfStrongPropA}) and for each $x\in X$ the horizon $\hor(B(x,2r),\mathcal{U})$ is finite and
$$\frac{|\hor(B(x,r),\mathcal{U})|}{|\hor( B(x,2r),\mathcal{U})|} > 1- \delta.$$

For each $x\in X$ let $A(x)=\hor( B(x,2r),\mathcal{U})$ and $D(x)=\hor(B(x,r),\mathcal{U})$.\\
Define the barycentric partition of unity $g:X\to l_1(S)$ ($S$ being the set indexing $\mathcal{U}$) as the normalization
of the function $f(x)=\chi_{A(x)}$. Notice that $\Leb(g)\ge 2r$. \\
If $d(x,y) < r$, then $D(x)\subset A(x)\cap A(y)$. Thus
$$|A(x)| < (1+\mu)\cdot |D(x)|\leq (1+\mu)\cdot |A(x)\cap A(y)|$$
resulting in
$$|A(x)\setminus A(y)| < \mu \cdot |A(x)\cap A(y)|.$$
Using \ref{BasicLemmaOnBaryPUs} we get that $d(x,y) < r$ implies
$\| g(x)-g(y)\| < 4\mu$.\\
If we request $r > \frac{1}{\epsilon}$, we get $\Leb(g) > \frac{1}{\epsilon}$.
If we request $\mu < \frac{\epsilon}{4}$ and $r > \frac{2-\epsilon}{\epsilon}$, then we get
$g$ is $(\epsilon,\epsilon)$-Lipschitz. Indeed, in case $d(x,y)\ge r$ it is automatic
($\epsilon \cdot d(x,y)+\epsilon > 2\ge \| g(x)-g(y)\| $ in this case), and
$d(x,y) < r$ implies $\| g(x)-g(y)\| < 4\mu \leq \epsilon$.
\end{proof}

\begin{Theorem}
For a coarsely doubling metric space $X$ the following conditions are equivalent:\\
a. $X$ is large scale paracompact,\\
b. $X$ has Property A,\\
c. $X$ is coarsely amenable.
\end{Theorem}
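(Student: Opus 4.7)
The proof proceeds by the cycle (c) $\Rightarrow$ (a) $\Rightarrow$ (b) $\Rightarrow$ (c), where only the last step needs the coarsely doubling hypothesis in an essential way.

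For (c) $\Rightarrow$ (a), I would simply invoke Proposition~\ref{StrongPropAImpliesPropA}: coarse amenability gives strong Property A, which by Definition~\ref{NewStrongPropADef} supplies, for each $\epsilon > 0$, a cobounded $(\epsilon,\epsilon)$-Lipschitz barycentric partition of unity of Lebesgue number at least $1/\epsilon$. Such a partition of unity is automatically simplicial, and the Lebesgue number condition is exactly the $1/\epsilon$-ball coarsening condition in Definition~\ref{LSParaDef}, so $X$ is large scale paracompact. For (a) $\Rightarrow$ (b) I would combine two facts: by Proposition~\ref{CoarselyDoublingProperties}(b) a coarsely doubling space is large scale finitistic, so Theorem~\ref{PropAForFinitistic} upgrades large scale paracompactness to strong Property A, and Proposition~\ref{CharPropAOfYuViaBaryPUs} then yields Property A.

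The substantive step is (b) $\Rightarrow$ (c). My plan is to reduce to the bounded-geometry case via Proposition~\ref{CoarselyDoublingProperties}(c): fix a subspace $Y\subseteq X$ of bounded geometry whose inclusion into $X$ is a coarse equivalence. Property A on $X$ restricts to $Y$ through the partition-of-unity characterization~\ref{CharPropAOfYuViaBaryPUs}; indeed, the restriction to $Y$ of a cobounded $(\delta,\delta)$-Lipschitz barycentric partition of unity on $X$ is still cobounded, $(\delta,\delta)$-Lipschitz, and barycentric (we simply discard any index whose star is missed by $Y$). Once on $Y$, I would verify directly that Property A implies coarse amenability under bounded geometry. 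Given $r,\epsilon > 0$, pick such an $f:Y\to l_1(V)$ with $\delta$ small (to be specified), write $f(y)=\chi_{C(y)}/|C(y)|$, and take the cover $\mathcal U=\{f^{-1}(\st(v))\}_{v\in V}$. Lemma~\ref{BasicLemmaOnBaryPUs} together with $\|f(y)-f(z)\|\le\delta r+\delta$ for $d(y,z) < r$ gives
$$|C(z)\setminus C(y)|\le (\delta r+\delta)\max(|C(y)|,|C(z)|),$$
and a short algebraic manipulation yields $|C(z)|\le |C(y)|/(1-\delta(r+1))$. Bounded geometry caps $|B(y,r)\cap Y|$ by some $N(r)$, so
$$\bigl|\hor(B(y,r),\mathcal U)\bigr|=\Bigl|\bigcup_{z\in B(y,r)\cap Y}C(z)\Bigr|\le |C(y)|\bigl(1+N(r)\cdot\tfrac{\delta r+\delta}{1-\delta(r+1)}\bigr).$$
Combined with $|\hor(y,\mathcal U)|=|C(y)|$ and criterion~\ref{CharOfStrongPropA}(c), choosing $\delta$ small relative to $\epsilon/N(r)$ gives coarse amenability of $Y$.

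To close the argument I would transport coarse amenability from $Y$ back to $X$. The coarse inverse of the inclusion $Y\hookrightarrow X$ is a coarse embedding $X\to Y$, so the Observation at the start of Section~\ref{CoarseAmenabilitySection} (proved exactly as Proposition~\ref{LSWeakParaIsAnInvariant}) transfers coarse amenability from $Y$ to $X$. The main obstacle is the quantitative estimate in the bounded-geometry step: in a general metric space the union $\bigcup_{z\in B(y,r)} C(z)$ can grow uncontrollably with the cardinality of $B(y,r)$, and it is only the bounded-geometry bound $N(r)$ on the number of relevant centers that lets us absorb this growth by shrinking $\delta$. This is the precise point where the coarsely doubling hypothesis, via the quasi-lattice supplied by Proposition~\ref{CoarselyDoublingProperties}(c), becomes indispensable.
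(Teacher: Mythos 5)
Your proposal is correct and follows essentially the same route as the paper: (c)$\Rightarrow$(a) via Proposition~\ref{StrongPropAImpliesPropA}, (a)$\Leftrightarrow$(b) via Theorem~\ref{PropAForFinitistic} together with large scale finitisticness from Proposition~\ref{CoarselyDoublingProperties}, and (b)$\Rightarrow$(c) by reducing to a bounded geometry subspace and bounding $\bigl|\bigcup_{z\in B(y,r)}C(z)\bigr|$ in terms of $|C(y)|$ using Lemma~\ref{BasicLemmaOnBaryPUs} and the cap $N(r)$ on the number of points in an $r$-ball, then invoking criterion~\ref{CharOfStrongPropA}(c). The only cosmetic difference is that you arrange the implications in a cycle while the paper proves a)$\equiv$b) first and then b)$\Rightarrow$c), and you spell out the transport of coarse amenability back from the quasi-lattice $Y$ to $X$, which the paper leaves implicit.
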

\begin{proof}
a)$\equiv$b) follows from \ref{PropAForFinitistic} and \ref{MainThmOnLSParacompactness}. In view of \ref{StrongPropAImpliesPropA} it suffices to show b)$\implies$c).

Using \ref{CoarselyDoublingProperties} we can reduce it to $X$ of bounded geometry.
Suppose $s > 0$. Pick $M > 0$ so that each $s$-ball $B(x,s)$, $x\in X$, contains at most $M$ points.

Given any $\mu > 0$ find a uniformly bounded cover
$\mathcal{U}(\mu)$ such that the barycentric partition of unity $p_{\mathcal{U}(\mu)}$ induced by $\mathcal{U}(\mu)$ is $(\mu,\mu)$-Lipschitz.

Given $x\in X$ let $A(x)=\hor(x,\mathcal{U}(\mu))$.
By \ref{BasicLemmaOnBaryPUs} it implies
$$|A(x)\Delta A(y)| < (s+1)\cdot\mu\cdot \max(|A(x)|,|A(y)|)$$
whenever $d(x,y) < s$. Therefore
$$|A(y)| < \frac{|A(x)|}{1-(s+1)\cdot\mu}$$
whenever $d(x,y) < s$.

Enumerate all points $y\in B(x,s)$ as $y_1,\ldots, y_k$ for some $k\leq M$. Now
\begin{align*}
|\bigcup\limits_{i=1}^k A(y_i)|&\leq |A(x)\cup \bigcup\limits_{i=1}^k (A(y_i)\setminus A(x))|\leq |A(x)|+ \sum\limits_{i=1}^k |A(x)\Delta A(y_i))|\leq \\
&\le |A(x)|+\frac{M\cdot (s+1)\cdot\mu\cdot |A(x)|}{1-(s+1)\cdot\mu}=
(1+\frac{M\cdot (s+1)\cdot\mu}{1-(s+1)\cdot\mu})\cdot |A(x)|
\end{align*}
Given any $\epsilon > 0$ we may choose $\mu > 0$ so that $(1+\frac{M\cdot (s+1)\cdot\mu}{1-(s+1)\cdot\mu})^{-1} > 1- \epsilon$.
Notice $\bigcup\limits_{i=1}^k A(y_i)=\hor( B(x,s),\mathcal{U}(\mu))$.
Since $A(x)=\hor(x,\mathcal{U}(\mu))$,
$$\frac{|\hor(x,\mathcal{U}(\mu))|}{|\hor( B(x,s),\mathcal{U}(\mu))|} > 1- \epsilon.$$
Thus $X$ is coarsely amenable by \ref{CharOfStrongPropA}.
\end{proof}

\begin{Corollary}\label{HilbertIsNotLSPara}
The Hilbert space is not large scale paracompact.
\end{Corollary}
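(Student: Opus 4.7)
The plan is to combine coarse invariance of large scale paracompactness (Corollary~\ref{LSParaIsAnInvariant}) with P.~Nowak's construction \cite{nowak}. Fix a finite nontrivial group $F$ and let $X=\bigsqcup_{n\ge 1}F^n$ be the Nowak space, with each component $F^n$ carrying a suitably normalized Hamming metric and successive components placed at distances tending to infinity. I will use two properties of $X$: (i) $X$ admits a coarse embedding into a separable Hilbert space; (ii) $X$ fails Property~A. Property (i) is Nowak's original theorem; property (ii) is Nowak's original result, and in Section~\ref{ExpandersSection} of the present paper $X$ is additionally shown to be not coarsely amenable via expander light sequences.

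Assume toward a contradiction that the Hilbert space is large scale paracompact. By Corollary~\ref{LSParaIsAnInvariant}, its coarsely embedded subspace $X$ is large scale paracompact as well. Because each $F^n$ has uniformly bounded diameter and the gaps between components diverge, for every scale $r>0$ one may cover $X$ by amalgamating the finitely many initial components into a single bounded set and taking the remaining $F^n$ individually; this cover is uniformly bounded, has Lebesgue number at least $r$, and has multiplicity one, so $X$ is large scale finitistic. Theorem~\ref{PropAForFinitistic} then upgrades large scale paracompactness of $X$ to strong Property~A, and hence to Property~A (by comparing Definition~\ref{NewStrongPropADef} with the characterization in Proposition~\ref{CharPropAOfYuViaBaryPUs}), contradicting~(ii).

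The main obstacle is ensuring that the metric on $X$ simultaneously supports the coarse embedding into Hilbert space, the failure of Property~A, and the finitistic cover structure. This is handled by normalizing the Hamming metric on each $F^n$ so that diameters are uniformly bounded while letting the inter-component gaps $g_n\to\infty$: the normalization is a bounded rescaling on each component and hence preserves the large-scale invariants (i) and (ii), while the bounded-diameter-plus-diverging-gap structure is exactly what is needed to exhibit $X$ as large scale finitistic. Once this choice is in place, the conclusion follows from a direct application of \ref{LSParaIsAnInvariant} together with \ref{PropAForFinitistic}.
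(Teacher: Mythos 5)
Your overall strategy (coarse invariance via \ref{LSParaIsAnInvariant} plus the finitistic equivalence \ref{PropAForFinitistic} applied to a badly behaved subspace of Hilbert space) has the right shape, but the subspace you chose does not do the job, and the paper explicitly warns against this choice in the remark following the corollary. The gap is in your normalization step. With the standard Hamming ($\ell_1$) metric on $F^n$ the component diameters grow like $n$, so the cover by components is not uniformly bounded and your argument that $X$ is large scale finitistic collapses; indeed Nowak's space is not of bounded geometry, which is exactly why the authors remark that one cannot derive the corollary from \cite{nowak}. If instead you rescale the metric on $F^n$ by a factor of order $1/n$ to force uniformly bounded diameters, that rescaling is \emph{not} a coarse equivalence, because the bi-Lipschitz constants are not uniform over $n$; the resulting space (bounded components separated by diverging gaps) has asymptotic dimension $0$, hence Property A, so your property (ii) is destroyed. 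There is no choice of metric that simultaneously gives you uniformly bounded components and the failure of Property A, and establishing that the unnormalized Nowak space is large scale finitistic is a nontrivial claim you have not proved (and which is doubtful, given that these cubes behave like expanders at bounded scales, cf.\ \ref{ProductOfGroupsProp}).

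The paper's proof sidesteps all of this by invoking \cite{AGS}: the Hilbert space contains a \emph{bounded geometry} subspace without Property A, namely the box space of the free group on two generators. Bounded geometry gives coarsely doubling, hence large scale finitistic, so the equivalence of Property A and large scale paracompactness applies directly to that subspace, and \ref{LSParaIsAnInvariant} finishes the argument. To salvage your approach you would need to replace Nowak's space by a bounded geometry (or at least demonstrably large scale finitistic) counterexample to Property A inside Hilbert space, which is precisely what \cite{AGS} supplies.
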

\begin{proof}
As shown in \cite{AGS}, the Hilbert space contains a bounded geometry subspace (the box space of the free group of two generators) that does not have Property A. Hence the Hilbert space cannot be large scale paracompact.
\end{proof}
\begin{Remark}
One cannot derive \ref{HilbertIsNotLSPara} from earlier result of P.Nowak \cite{nowak}
(who constructed a subspace of the Hilbert cube without Property A)
as his subspace is not of bounded geometry.
\end{Remark}

\begin{Problem}\label{SimpleProofHProblem}
Find a direct/simple proof of the Hilbert space not being large scale paracompact.
\end{Problem}

\section{Expanders and coarse amenability}\label{ExpandersSection}

The purpose of this section is to prove that three major classes are not coarsely amenable: expanders, graph sequences with girth approaching infinity, and spaces constructed by P.Nowak \cite{nowak}.

Let $G$ be an undirected graph with vertex set $V(G)$ and edge set $E(G)$. For a collection of vertices $A \subseteq V(G)$, let $\partial A$ denote the collection of all edges going from a vertex in $A$ to a vertex outside of $A$:
$$ \partial A := \{ (x, y) \in E | x \in A, y \in V(G) \setminus A \}.$$
(Remember that edges are unordered, so the edge $(x, y)$ is the same as the edge $(y, x)$.)
\begin{Definition}\label{CheegerConstantDef}
The \textbf{Cheeger constant} of a finite graph $G$, denoted $h(G)$, is defined by
$$    h(G) := \min \left\{ \left. \frac{| \partial A |}{| A |} \right| A \subseteq V(G), 0 < | A | \leq \frac{| V(G) |}{2} \right\}.$$
\end{Definition}

The Cheeger constant is strictly positive if and only if $G$ is a connected graph. Intuitively, if the Cheeger constant is small but positive, then there exists a "bottleneck", in the sense that there are two "large" sets of vertices with "few" links (edges) between them. The Cheeger constant is "large" if any possible division of the vertex set into two subsets has "many" links between those two subsets (see Wikipedia).
%\url{http://en.wikipedia.org/wiki/Cheeger_constant_%28graph_theory%29}

\begin{Definition}\label{ExpanderDef}
A finite graph $G$ is a \textbf{$(k,\varepsilon)$-expander} if each vertex of $G$ has valency (degree) at most $k$, and $h(G)\geq \varepsilon > 0$.

A sequence of finite connected graphs $\{G_i\}$ is called a \textbf{graph sequence} if $|G_i|\rightarrow \infty$.

A graph sequence $\{G_i\}$ is called an \textbf{expander sequence} if there exists $k,\varepsilon$ such that each $G_i$ is a $(k,\varepsilon)$-expander.
\end{Definition}

Expander sequences were defined by Bassalygo and Pinsker in 1973 \cite{BP}. It is not obvious that such sequences exist. Their existence was first proved by Pinsker \cite{Pin}, in a non-constructive way. Margulis was the first to give explicit examples of expanders using discrete groups with property (T) \cite{Mar73} , \cite{Mar88}. For more on expanders see \cite{HLW}.

From the point of view of large scale geometry expander sequences are too restrictive and we will weaken their definition.

\begin{Definition}\label{HaloDef}
Given a subset $A$ of a finite graph $G$ its \textbf{halo} (denoted by $\halo(A)$) is the set of points not in $A$ such that their $2$-ball (in the graph metric) intersects $A$.
\end{Definition}

\begin{Definition} [\cite{YuNo}, p.18 for finite spaces]
Given a sequence $\{(X_n,d_n)\}$ of bounded metric spaces their \textbf{coarse disjoint union}
$X=\bigcup\limits_{n=1}^\infty X_n$ is the disjoint union of all $X_n$ equipped with the metric
$d:X\times X\to [0,\infty)$ defined as follows:\\
a. $d|X_n\times X_n=d_n$ for each $n$.\\
b. $d(x,y)=\diam(X_i)+\diam(X_j)$ if $x\in X_i$, $y\in X_j$ and $i\ne j$.

\end{Definition}

\begin{Definition}\label{ExpanderLightDef}
A graph sequence $\{G_i\}$ is an \textbf{expander light sequence} if there is $c > 0$
such that for any uniformly bounded sequence of subsets $A_n\subset G_n$
one has $|\halo(A_n)|\ge c\cdot |A_n|$ for infinitely many $n$.

The size of $A_n$ is measured using the metric from the coarse disjoint union 
$ \bigcup\limits_{i=1}^\infty G_i$, where each $G_i$ is equipped with the graph metric.
\end{Definition}

\begin{Remark}
Notice that H.Sako \cite{Sato} (see Definition 2.4) independently came up with a concept equivalent to our expander light sequences under the name of \lq a sequence of weak expander spaces\rq\ and in a more general context of coarse spaces.
\end{Remark}

\begin{Proposition}\label{AltExpanderDef}
Any expander sequence $\{G_i\}$ is an expander light sequence.
\end{Proposition}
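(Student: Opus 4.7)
The plan is to use the degree bound to force uniformly bounded subsets to have bounded cardinality, and then apply the Cheeger inequality directly.

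Step 1: translate the hypothesis. A $(k,\varepsilon)$-expander sequence has every vertex of degree at most $k$ and Cheeger constant at least $\varepsilon$. A uniformly bounded sequence $A_n\subset G_n$ means $\diam(A_n)\leq D$ for some fixed $D>0$ (in the coarse disjoint union metric, this automatically keeps $A_n$ inside a single $G_n$, which is already given). Since $A_n$ sits inside a ball of radius $D$ in a graph of degree $\leq k$, we have a uniform bound $|A_n|\leq N:=1+k+k(k-1)+\cdots+k(k-1)^{D-1}$.

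Step 2: use $|G_n|\to\infty$. Because $|V(G_n)|\to\infty$, for all sufficiently large $n$ we get $|A_n|\leq N \leq |V(G_n)|/2$, so $A_n$ is an admissible set for the Cheeger minimum. Thus $|\partial A_n|\geq \varepsilon\cdot|A_n|$.

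Step 3: relate $|\partial A_n|$ to the halo. Let $\partial' A_n:=\{v\in V(G_n)\setminus A_n\mid v\text{ is adjacent to some }a\in A_n\}$ be the vertex boundary. Every edge of $\partial A_n$ has its outside endpoint in $\partial' A_n$, and each vertex of $\partial' A_n$ is incident to at most $k$ edges in total (hence at most $k$ edges of $\partial A_n$). Therefore
\[
|\partial A_n|\;\leq\; k\cdot|\partial' A_n|.
\]
Since $\partial' A_n\subseteq \halo(A_n)$ (the 1-neighbourhood is contained in the 2-neighbourhood), we conclude $|\halo(A_n)|\geq (\varepsilon/k)\cdot|A_n|$ for all sufficiently large $n$, which is much stronger than the required ``for infinitely many $n$.'' Setting $c=\varepsilon/k$ finishes the argument.

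There is essentially no obstacle here; the only subtle bookkeeping is checking that ``uniformly bounded'' in the coarse disjoint union really forces $A_n$ to live inside a single component with uniformly bounded diameter, and then using bounded degree (which is built into the definition of expander) to turn bounded diameter into bounded cardinality. The Cheeger hypothesis is applied to a single set $A_n$ in a single graph $G_n$, so the coarse disjoint union structure is not needed beyond setting up the cardinality bound.
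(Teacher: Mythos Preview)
Your proof is correct and follows essentially the same route as the paper's: both use $|G_n|\to\infty$ to ensure $|A_n|\leq |V(G_n)|/2$ for large $n$, apply the Cheeger bound $|\partial A_n|\geq \varepsilon|A_n|$, and then pass from the edge boundary to the halo via the degree bound, arriving at the same constant $c=\varepsilon/k$. Your Step~1 makes explicit (via the ball-size estimate in a degree-$k$ graph) why a uniformly bounded $A_n$ has uniformly bounded cardinality, which the paper leaves implicit; otherwise the arguments are identical.
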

\begin{proof} Suppose each $G_i$ is a $(k,\varepsilon)$-expander.
Given a uniformly bounded sequence $\{A_n\}$ of subsets of $G_n$ there is $M > 0$
such that $A_n$ contains at most half of vertices of $G_n$ for $n > M$.

The collection $\partial A_n$ of all edges going from a vertex in $A_n$ to a vertex outside of $A_n$ has
at least $\epsilon\cdot |A_n|$ elements. Their endpoints not in $A_n$ form exactly the set $C=\halo(A_n)$ of points not in $A_n$ such that their $2$-ball intersects $A_n$. Since each point $c\in C$ can produce at most $k$ edges in $\partial A_n$, $|C|\ge \frac{|\partial A_n|}{k}\ge \epsilon \frac{|A_n|}{k}$, and $c=\frac{\epsilon}{k}$ works.
\end{proof}

\begin{Theorem}\label{ExpanderLightThm}
Let $X =\bigcup\limits_{n=1}^\infty G_n$ be the coarse disjoint union of a graph sequence.
If $\{G_n\}$ is an expander light sequence,
then $X$ is not coarsely amenable.
\end{Theorem}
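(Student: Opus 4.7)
My plan is to argue by contradiction, leveraging the characterization of coarse amenability in Proposition~\ref{CharOfStrongPropA}(c). Assuming $X=\bigcup_n G_n$ is coarsely amenable, I would fix $\epsilon>0$ small enough that $\epsilon/(1-\epsilon)<c$ (for instance $\epsilon<c/(1+c)$), where $c$ is the expander light constant of Definition~\ref{ExpanderLightDef}. Applying \ref{CharOfStrongPropA}(c) at scale $r=2$ produces a uniformly bounded cover $\mathcal{U}$ of $X$ whose horizons $\hor(B(y,2),\mathcal{U})$ are all finite and which satisfies $|\hor(y,\mathcal{U})| > (1-\epsilon)\,|\hor(B(y,2),\mathcal{U})|$ for every $y\in X$.

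For all sufficiently large $n$, any $U\in\mathcal{U}$ meeting $G_n$ is in fact contained in $G_n$, since $\mathcal{U}$ has bounded diameter while $\dist(G_n,G_m)\to\infty$ as $n,m$ separate in the coarse disjoint union. Let $\mathcal{U}_n=\{U\in\mathcal{U}\mid U\subseteq G_n\}$; this is a cover of $G_n$, and because the graph metric is integer-valued one has the clean identity $B(U,2)\setminus U=\halo(U)$ for every $U\in\mathcal{U}_n$. The core of the argument is a double-counting step: summing the local bound $|\hor(B(y,2),\mathcal{U})\setminus\hor(y,\mathcal{U})|\le\epsilon\,|\hor(B(y,2),\mathcal{U})|$ over $y\in G_n$ and identifying both sides with sums indexed by $\mathcal{U}_n$ gives
\[
\sum_{U\in\mathcal{U}_n}|\halo(U)| \;\le\; \epsilon\sum_{U\in\mathcal{U}_n}\bigl(|U|+|\halo(U)|\bigr),
\]
which rearranges to $\sum_U|\halo(U)|\le\tfrac{\epsilon}{1-\epsilon}\sum_U|U|<c\sum_U|U|$. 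A weighted pigeonhole then yields, for each large $n$, some $U_n\in\mathcal{U}_n$ with $|\halo(U_n)|<c\,|U_n|$.

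To close the argument I would observe that $\{U_n\}$ is a uniformly bounded sequence of subsets with $U_n\subseteq G_n$ (each $U_n$ is an element of $\mathcal{U}$, so its diameter is at most $\diam(\mathcal{U})$), whence the defining property of an expander light sequence forces $|\halo(U_n)|\ge c\,|U_n|$ for infinitely many $n$, contradicting the construction. The main point needing careful bookkeeping is the identification of horizons restricted to $G_n$ with horizons in $X$, so that $\sum_{y\in G_n}|\hor(B(y,2),\mathcal{U})|=\sum_{U\in\mathcal{U}_n}|B(U,2)|$; this rests on the same diameter-versus-gap estimate that gives $\mathcal{U}_n\subseteq G_n$, together with the fact that $B(U,2)\subseteq G_n$ whenever $U\in\mathcal{U}_n$ and $n$ is large.
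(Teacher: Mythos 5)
Your argument is correct and is essentially the paper's proof: the same double count of incidences $(y,U)$ with $y\in\halo(U)$, bounded below by the expander-light halo condition and above by the horizon ratio from Proposition \ref{CharOfStrongPropA}, carried out inside a single component where cover elements cannot straddle components. The only difference is organizational --- the paper fixes one good component in which \emph{every} $U_s$ has a large halo and derives the numerical bound $p\le 1/(1+c)$ there, whereas you extract one bad $U_n$ from each large component by weighted pigeonhole and contradict Definition \ref{ExpanderLightDef} directly; these are contrapositive packagings of the same counting argument.
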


\begin{proof}
Pick a uniformly bounded cover
$\mathcal{U}=\{U_s\}_{s\in S}$ of $X$ such that for each $x$
$$\frac{|\hor(B(x,1),\mathcal{U})|}{|\hor( B(x,2),\mathcal{U})|} > p > \frac{1}{1+c}.$$
Restrict the cover to the graph $G=X_m$, with $m$ chosen so that elements $U_s$ of $\mathcal{U}$ intersecting $X_m$ must be contained in $X_m$ and
the halo of $U_s$ contains at least $c\cdot |U_s|$ elements. All sufficiently large $m$
have the property that elements of $\mathcal{U}$ intersecting $X_m$ must be contained in $X_m$ (as they are uniformly bounded and, for large $m$, $X_m$ is far away from its complement in the graph sequence) and among those, infinitely many $m$ have the property that the halo of $U_s$ contains at least $c\cdot |U_s|$ elements.\\
Let $P$ be the set of pairs $(x,s)$ such that $x\notin U_s$ but $B(x,2)$ intersects $U_s$. 
By fixing $s$ and counting points $x\in U_s$ such that $(x,s)\in P$, we see that
$$|P|\ge c\cdot \sum\limits_{s\in S} |U_s|.$$
Also,
$$|P|\leq \frac{1-p}{p}\cdot  \sum\limits_{s\in S} |U_s|.$$
Indeed,
\begin{align*}
|P|&=\sum\limits_{x\in G} (|\hor( B(x,2),\mathcal{U})|-|\hor( B(x,1),\mathcal{U})|) <\\
&<\frac{1-p}{p} \sum\limits_{x\in G} |\hor( B(x,1),\mathcal{U})|=\frac{1-p}{p}\cdot  \sum\limits_{s\in S} |U_s|.
\end{align*}
Therefore
$$ c \leq \frac{1-p}{p}$$
and there is a bound on $p$ from above
$$p \leq \frac{1}{1+c}.$$
\end{proof}

\begin{Corollary}
The coarse disjoint union of any expander sequence does not have Property A.
\end{Corollary}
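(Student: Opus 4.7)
The plan is to chain together three facts that have just been established in the excerpt: the passage from expander to expander light, the non-amenability criterion for expander light sequences, and the equivalence of Property~A with coarse amenability for bounded geometry spaces.

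First, I would observe that if $\{G_n\}$ is an expander sequence in the sense of Definition~\ref{ExpanderDef}, then by Proposition~\ref{AltExpanderDef} it is automatically an expander light sequence. Hence Theorem~\ref{ExpanderLightThm} applies to $X = \bigcup_{n=1}^\infty G_n$ and tells us that $X$ is not coarsely amenable.

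Next, I would promote this to a statement about Property~A. Since each vertex in each $G_n$ has degree at most $k$, any $r$-ball in the graph metric on $G_n$ contains at most $1 + k + k(k-1) + \cdots + k(k-1)^{r-1}$ vertices, a bound independent of $n$. Combined with the definition of the coarse disjoint union metric, this shows that $X$ is of bounded geometry. By Proposition~\ref{CoarselyDoublingProperties}(a), $X$ is therefore doubling, hence in particular coarsely doubling. The main theorem of Section~\ref{CoarseAmenabilitySection} then says Property~A, coarse amenability, and large scale paracompactness coincide on coarsely doubling spaces. Since $X$ is not coarsely amenable, it fails to have Property~A.

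There is essentially no obstacle here: all the nontrivial work has already been done in Proposition~\ref{AltExpanderDef}, Theorem~\ref{ExpanderLightThm}, and the equivalence theorem for coarsely doubling spaces. The only thing to check by hand is the trivial observation that uniform degree bound plus the coarse disjoint union construction yields bounded geometry, which follows immediately because $r$-balls are contained in single graphs $G_n$ once $r$ is smaller than the gaps $\operatorname{diam}(G_i)+\operatorname{diam}(G_j)$ imposed between distinct components.
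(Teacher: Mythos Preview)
Your proof is correct and follows exactly the route the paper intends: the corollary is stated without proof in the paper, and the natural chain---Proposition~\ref{AltExpanderDef}, then Theorem~\ref{ExpanderLightThm}, then the equivalence theorem of Section~\ref{CoarseAmenabilitySection} applied via the bounded-geometry observation---is precisely what is being invoked. The only minor imprecision is in your last sentence: it is not that a given $r$ is smaller than all gaps, but rather that since $\diam(G_n)\to\infty$ (forced by $|G_n|\to\infty$ together with the degree bound), only finitely many components have diameter below $r$, and those contribute a fixed finite number of points; this is enough for bounded geometry.
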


\begin{Remark}
See \cite{KhWr} for a proof of the fact that expander sequences do not have Property A using cohomology. See \cite{BNW} for a cohomology characterization of Property A.
\end{Remark}

\begin{Proposition}
Suppose $\{G_i\}$ is a graph sequence with all vertices of degree at least three. If $girth(G_n)\to\infty$, then 
$\{G_i\}$ is an expander light sequence.
\end{Proposition}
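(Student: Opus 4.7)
The plan is to exploit the girth assumption in order to realize each uniformly bounded $A_n\subseteq G_n$ inside a locally tree-like region of $G_n$, and then deduce the halo bound from a clean isoperimetric inequality for trees of minimum degree at least three.

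First I would use uniform boundedness to fix $D>0$ with $\diam_{G_n}(A_n)\leq D$ for every $n$ (in the coarse disjoint union the only relevant metric is the graph metric inside each $G_n$). For any $x_0\in A_n$ the ball $T_n:=B(x_0,D+1)\subseteq G_n$ contains $A_n$ together with all $G_n$-neighbors of $A_n$. A standard argument shows that whenever $\mathrm{girth}(G_n)>4(D+1)+1$, the induced subgraph on $T_n$ is a tree: if $T_n$ contained a shortest cycle of length $g'$, then for antipodal vertices $u,v$ on it one would have $d_{T_n}(u,v)=\lfloor g'/2\rfloor$ (otherwise a shorter cycle sits inside $T_n$), while $d_{T_n}(u,v)\leq 2(D+1)$ via $x_0$, forcing $g'\leq 4(D+1)+1<\mathrm{girth}(G_n)$, a contradiction. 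Since $\mathrm{girth}(G_n)\to\infty$, this tree property holds for all sufficiently large $n$, and in such $T_n$ every $a\in A_n$ retains $\deg_{T_n}(a)=\deg_{G_n}(a)\geq 3$.

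The heart of the argument is the following tree isoperimetric lemma: if $T$ is a tree and $A\subseteq V(T)$ is finite with $\deg_T(a)\geq 3$ for every $a\in A$, then $|\partial_{\mathrm{out}} A|\geq |A|+2$, where $\partial_{\mathrm{out}} A:=\{v\in V(T)\setminus A : v \text{ is adjacent to some } a\in A\}$. To prove it, let $k$ be the number of components of the induced subgraph on $A$; that subgraph is a forest, so has exactly $|A|-k$ edges, and summing $\deg_T$ over $A$ gives $e(A,A^c)\geq 3|A|-2(|A|-k)=|A|+2k$. Now contract each component of $A$ in $T$ to a single vertex, obtaining a tree $T/A$, and let $H$ be the subgraph of $T/A$ induced on the $k$ contracted vertices together with $\partial_{\mathrm{out}} A$. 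As a subgraph of a tree, $H$ is a forest on $k+|\partial_{\mathrm{out}} A|$ vertices, and it contains every edge between $A$-components and $A^c$, so $|E(H)|\geq |A|+2k$. The forest bound $|E(H)|\leq |V(H)|-1$ then yields $|A|+2k\leq k+|\partial_{\mathrm{out}} A|-1$, i.e.\ $|\partial_{\mathrm{out}} A|\geq |A|+k+1\geq |A|+2$.

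Combining the two parts, for all $n$ large one has $\partial_{\mathrm{out}}^{G_n} A_n=\partial_{\mathrm{out}}^{T_n} A_n\subseteq \halo(A_n)$, so $|\halo(A_n)|\geq |A_n|+2$, which verifies the expander light condition with $c=1$ (the bound even holds for all sufficiently large $n$, not merely infinitely many). The main obstacle is the isoperimetric lemma itself: a naive per-component bound $|\partial_{\mathrm{out}} C_i|\geq |C_i|+2$ cannot simply be summed, because a single high-degree vertex of $T$ may border many components at once and cause severe overcounting. The contraction trick circumvents this by replacing per-component estimates with a single global forest edge-count inside $T/A$, which is the cleanest route I see to the bound.
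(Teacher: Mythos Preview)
Your argument is correct, and its overall architecture matches the paper's: use the girth hypothesis to place each $A_n$ inside a tree, then prove a tree isoperimetric inequality yielding the constant $c=1$. The difference lies in how that inequality is established. The paper proceeds by induction on $A$: fix $a_0\in A$, peel off the set $B$ of points of $A$ at maximal distance from $a_0$, and observe that each $b\in B$ (degree at least three, unique parent in the tree) has at least two neighbours strictly farther from $a_0$; these lie in $\halo(A)$ but not in $\halo(A\setminus B)$, giving $|\halo(A)|\ge |\halo(A\setminus B)\setminus B|+2|B|\ge |A|$. Your route is a global edge count: from $\sum_{a\in A}\deg_T(a)\ge 3|A|$ you extract $e(A,A^c)\ge |A|+2k$, then the forest bound in the contracted tree $T/A$ converts this into $|\partial_{\mathrm{out}}A|\ge |A|+k+1$. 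The paper's layer-peeling is more elementary and stays entirely inside the tree; your contraction argument is more structural, makes explicit why naive per-component summation fails, yields the marginally sharper bound $|A|+2$, and generalises immediately to minimum degree $d$ (giving $|\partial_{\mathrm{out}}A|\ge (d-2)|A|+2$).
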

\begin{proof}
By \textbf{girth} (notation: $girth(G)$) of a graph $G$ we mean the minimum number of edges
of a non-trivial loop in $G$.

Given $M > 0$ consider $k$ so that $girth(X_n) > 4M$ for $n \ge k$. Notice any $2M$-ball in $X_n$ is a tree and for any subset $A$ of $X_n$ containing at most $M$ elements, the number of points in the halo of $A$ is at least $|A|$. This can be shown by induction. It is clearly so if $A$ contains only one point or is empty. For arbitrary $A$, we pick $a_0\in A$ and denote by $B$ the set of points in $A$ at the maximal distance from $a_0$. Put $ A_1=A\setminus B$.
Notice each point of $B$ contributes at least $2$ points to $\halo(A)$ that are not in $\halo(A_1)$. Therefore $|\halo(A)|\ge |\halo(A_1)\setminus B|+2\cdot |B|\ge |A_1|-|B|+2|B|=|A_1|+|B|=|A|$.
\end{proof}

\begin{Corollary} [R.Willett \cite{Willett2}]
Let $X = \bigcup\limits_{n=1}^\infty X_n$ be the coarse disjoint union of a graph sequence 
with degrees of all vertices at least three. Assume that $girth(X_n)\to\infty$. Then $X$ does
not have Property A if it is of bounded geometry (that amounts to an upper bound on the degrees of all vertices).
\end{Corollary}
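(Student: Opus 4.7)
The plan is to chain together three results already established in the paper. First, the hypothesis that every vertex in $X_n$ has degree at least three together with $\operatorname{girth}(X_n)\to\infty$ is exactly the hypothesis of the proposition immediately preceding this corollary, which shows $\{X_n\}$ is an expander light sequence. So my first move is to invoke that proposition and record that the graph sequence underlying $X$ is expander light.

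Next, I would apply Theorem~\ref{ExpanderLightThm} directly to the coarse disjoint union $X$ to conclude that $X$ is not coarsely amenable. This step is immediate once the first step is in place, since the hypotheses of Theorem~\ref{ExpanderLightThm} are precisely what we just verified.

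Finally, I need to upgrade the failure of coarse amenability to the failure of Property~A; this is where the bounded geometry hypothesis is used. By Proposition~\ref{CoarselyDoublingProperties} bounded geometry implies coarsely doubling, and then the main theorem of Section~\ref{CoarseAmenabilitySection} shows that on a coarsely doubling space Property~A and coarse amenability coincide. Hence, since $X$ is of bounded geometry but not coarsely amenable, $X$ cannot have Property~A.

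There is no real obstacle here, since each of the three needed implications is already proven; the only thing to be careful about is that the implication ``not coarsely amenable $\Rightarrow$ not Property~A'' requires the bounded geometry assumption (via coarsely doubling), whereas the reverse implication ``coarsely amenable $\Rightarrow$ Property~A'' holds in general by Proposition~\ref{StrongPropAImpliesPropA}. Stating the argument clearly in the correct direction is the only subtlety.
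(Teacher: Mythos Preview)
Your proposal is correct and is exactly the argument the paper intends: the Corollary is stated without an explicit proof because it follows immediately by chaining the preceding Proposition (girth $\to\infty$ and minimum degree $\ge 3$ give an expander light sequence), Theorem~\ref{ExpanderLightThm} (expander light implies not coarsely amenable), and the equivalence theorem of Section~\ref{CoarseAmenabilitySection} for coarsely doubling spaces. Your care about the direction of the implication in step~3 is well placed, and the use of bounded geometry via Proposition~\ref{CoarselyDoublingProperties} is precisely how the paper sets things up.
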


\begin{Proposition}\label{ProductOfGroupsProp}
Suppose $\{(G_i,\Sigma_i)\}$ is an infinite sequence of finite groups and their generators. For each $n$ let $S_n$ be the set of generators of $H_n=G_1\times\ldots\times G_n$ obtained as the union of sets $\{1\}\times\ldots\times \{1\}\times \Sigma_i\times \{1\}\times\ldots\times \{1\}$. If all $G_i$ are non-trivial, then the sequence of Cayley graphs of $(H_i,S_i)$ is an expander light sequence.
\end{Proposition}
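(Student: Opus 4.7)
The plan is to show that any uniformly bounded sequence $(A_n)$ with $\diam(A_n)\le D$ for all $n$ satisfies $|\halo(A_n)|\ge|A_n|$ for every $n\ge 5D+4$, so that the sequence is expander light with $c=1$. For each $n$ fix a basepoint $a_0=a_0(n)\in A_n$ and, for $a\in A_n$, write $\mathrm{supp}(a)\subseteq\{1,\dots,n\}$ for the set of coordinates on which $a$ differs from $a_0$. Since $|\mathrm{supp}(a)|\le d(a_0,a)\le D$, the active-coordinate set $T_n=\bigcup_{a\in A_n}\mathrm{supp}(a)$ satisfies $|T_n|\le D|A_n|$.

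First I would handle the easy case $|T_n|<n$. Pick $i\notin T_n$ and a generator $\sigma\in\Sigma_i$ (non-empty because $G_i$ is non-trivial). Every $a\in A_n$ has $a_i=(a_0)_i$, so $(a\sigma)_i=(a_0)_i\sigma\ne(a_0)_i$ and hence $a\sigma\notin A_n$. Therefore $A_n\sigma\subset B(A_n,1)\setminus A_n$, yielding $|\halo(A_n)|\ge|A_n|$.

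The delicate case is $|T_n|=n$, which forces $|A_n|\ge n/D$. A single translate $A_n\sigma$ is no longer guaranteed to be disjoint from $A_n$, so replace it by the map $\Phi(a,k,\sigma)=a\sigma$ on the set $\mathcal I$ of triples with $a\in A_n$, $k\notin\mathrm{supp}(a)$, and $\sigma\in\Sigma_k$; the image of $\Phi$ lies in $B(A_n,1)$. Two estimates are needed.
\emph{(i) A lower bound on $|\mathcal I|$.} Setting $n_i=|\{a\in A_n:i\in\mathrm{supp}(a)\}|$, one has $1\le n_i\le|A_n|$ (the lower bound from $|T_n|=n$), $\sum_i n_i\le D|A_n|$, and $|\mathcal I|=|A_n||S_n|-\sum_i|\Sigma_i|n_i$. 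A short LP argument (sort $|\Sigma_i|$ decreasingly and fill top coordinates to $|A_n|$, the rest to $1$) bounds $\sum_i|\Sigma_i|n_i$ above by $|A_n|S_{\mathrm{top}}+S_{\mathrm{rest}}$, where $S_{\mathrm{top}}$ is the sum of the $D$ largest $|\Sigma_i|$ and $S_{\mathrm{rest}}=|S_n|-S_{\mathrm{top}}\ge n-D$ (each $|\Sigma_i|\ge 1$). Hence $|\mathcal I|\ge(|A_n|-1)(n-D)$.
\emph{(ii) A multiplicity bound for $\Phi$.} If $a\sigma=h$ with $(a,k,\sigma)\in\mathcal I$, then $a=h\sigma^{-1}$ differs from $h$ only in coordinate $k$; the requirement $k\notin\mathrm{supp}(a)$ pins down $\sigma=(a_0)_k^{-1}h_k$, and $\sigma\ne e$ forces $k\in\mathrm{supp}(h)$. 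So each $h$ has at most $|\mathrm{supp}(h)|\le d(a_0,h)\le D+1$ preimages.

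Combining (i) and (ii) with the trivial $|\mathrm{Image}(\Phi)\cap A_n|\le|A_n|$ gives
\[
|\halo(A_n)|\;\ge\;|B(A_n,1)\setminus A_n|\;\ge\;\frac{(|A_n|-1)(n-D)}{D+1}-|A_n|,
\]
which exceeds $|A_n|$ as soon as $n\ge 5D+4$ (using $|A_n|\ge 2$, automatic since $|A_n|\ge n/D$). Together with the easy case, $|\halo(A_n)|\ge|A_n|$ for every $n\ge 5D+4$, proving expander lightness with constant $c=1$.

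The main obstacle is the case $|T_n|=n$: once every coordinate is active, no single translate $A_n\sigma$ need be disjoint from $A_n$, and the whole argument rests on replacing a single translate by the entire family $\mathcal I$ of partial one-coordinate extensions and checking that its size (roughly $n|A_n|$) dominates the preimage defect $D+1$ once $n$ is large compared with $D$.
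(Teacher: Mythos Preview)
Your argument is correct, with one small caveat: in part (ii) you implicitly assume $e\notin\Sigma_k$. If the identity were allowed, then every triple $(a,k,e)$ with $k\notin\mathrm{supp}(a)$ maps to $a$ itself, and the multiplicity bound $D+1$ collapses. This is harmless: either remove $e$ from each $\Sigma_k$ without loss of generality, or restrict $\mathcal I$ to triples with $\sigma\ne e$; since each $G_k$ is nontrivial, $|\Sigma_k\setminus\{e\}|\ge 1$ and all your estimates survive. Incidentally, the LP step is heavier than needed: the one-line bound
\[
|\mathcal I|=\sum_{a\in A_n}\sum_{k\notin\mathrm{supp}(a)}|\Sigma_k|\ \ge\ \sum_{a\in A_n}(n-D)\ =\ |A_n|(n-D)
\]
already suffices, and with it your threshold improves to $n\ge 3D+2$.

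Your route differs genuinely from the paper's. The paper translates so that $1\in A$ and argues by induction on the radius $M$: it removes the outermost shell $B$ (points of $A$ at maximal distance from $1$), applies the inductive hypothesis to $A_1=A\setminus B$, and then observes that extending each $b\in B$ into one of its $\ge n-M$ identity coordinates yields at least $\frac{(n-M)|B|}{M+1}\ge 2|B|$ halo points outside $A\cup\halo(A_1)$, whence $|\halo(A)|\ge|\halo(A_1)\setminus B|+2|B|\ge|A|$. Both proofs rest on the same ``extend into an empty coordinate'' move and the same $(M{+}1)$-to-$1$ multiplicity; the difference is organizational. The paper's layer-by-layer induction avoids your case split and gets a marginally better constant $n>3M+2$; your global counting via $\Phi$ avoids the inductive bookkeeping and makes the image-versus-multiplicity mechanism completely explicit, at the price of having to dispose separately of the degenerate case $|T_n|<n$ (which in fact has a one-line proof anyway).
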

\begin{proof}

\textbf{Claim}: If $A\subset H_n$ contains the unit $1$, $M$ is a natural number, $A\subset B(1,M)$, and $n > 3M+2$,
then $|\halo(A)|\ge |A|$.\\
\textbf{Proof of Claim}: It is clearly so for $M=1$. Suppose $M=k+1$ and $k\ge 1$.
Let $C$ be the set of points in $A$ at distance $k$ from $1$ and let $A_1=A\setminus B$. By induction, $\halo(A_1)$ contains at least $|A_1|$ elements.

Notice that $\halo(B)$ contains at least $\frac{(n-M)\cdot |B|}{M+1}\ge 2|B|$ elements that do not belong to $A\cup \halo(A_1)$. Indeed, given an element $b$ of $B$ one can put a generator of $G_j$
in the $j$th slot of $b$ if it is equal to $1$. That results in possible counting of each element $M+1$ times which explains the denominator.

Now $|\halo(A)|\ge |\halo(A_1)-B|+2|B|\ge |A|$.

Since each subset $A$ of $H_n$ is isometric to a subset containing $1$, we can drop the assumption of $1\in A$ in the Claim and derive \ref{ProductOfGroupsProp} that way.
\end{proof}

We are now able to deduce in a simple way that the Hilbert space is not coarsely amenable (see \ref{SimpleProofHProblem}).
\begin{Corollary}\label{InfiniteProductOfGroups}
Consider a finite group $G\ne 1$ with a metric $d_G$. If each $G_n=G\times \ldots \times G$ is equipped with the $l_1$ metric, then the coarse disjoint union $X=\bigcup G_n$ is not coarsely amenable. In particular, the Hilbert space is not coarsely amenable.
\end{Corollary}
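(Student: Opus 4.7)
The plan is to combine Proposition~\ref{ProductOfGroupsProp} with Theorem~\ref{ExpanderLightThm}. I would apply Proposition~\ref{ProductOfGroupsProp} taking every factor equal to $G$ and $\Sigma = G\setminus\{1\}$ as generators; this shows that the Cayley graphs on $H_n = G^n$ form an expander light sequence, and then Theorem~\ref{ExpanderLightThm} yields that their coarse disjoint union $Y$ is not coarsely amenable.

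Next I would transfer this conclusion from $Y$ to $X$. Since $G$ is finite, any two positive metrics on $G$ are bi-Lipschitz equivalent, with constants depending only on $G$; in particular the word length with respect to $\Sigma$ is bi-Lipschitz to $d_G$. Summing over coordinates should give a uniform (in $n$) bi-Lipschitz equivalence between the $l_1$ metric on $G^n$ and the Cayley graph metric on $H_n$, and because the diameters of both scale by the same bounded factor, the identity map will be a coarse equivalence between $X$ and $Y$. Coarse amenability being a coarse invariant, $X$ then fails to be coarsely amenable as well.

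For the Hilbert space consequence, I would specialize to $G = \mathbb{Z}/2$ with $d_G(0,1)=1$, so that each $G_n$ is the Hamming cube $\{0,1\}^n$. The plan is to produce a coarse embedding $\varphi : X \to \ell^2$ by choosing, for each $n$, a fresh orthonormal system $\{e_{n,0}, e_{n,1}, \ldots, e_{n,n}\}$ in $\ell^2$ and setting
\[
\varphi(g_1, \ldots, g_n) \;=\; n\cdot e_{n,0} + \sum_{i=1}^n g_i\, e_{n,i}.
\]
Within a single cube one computes $\|\varphi(x)-\varphi(y)\|_{\ell^2} = \sqrt{d_{l_1}(x,y)}$; between distinct cubes $G_m$ and $G_n$, the offsets $m\,e_{m,0}$ and $n\,e_{n,0}$ force $\|\varphi(x)-\varphi(y)\|_{\ell^2} \ge (m+n)/\sqrt{2}$, while the total $\ell^2$-norm still stays of order $m+n$. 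These estimates should make $\varphi$ a coarse embedding, and the observation at the start of Section~\ref{CoarseAmenabilitySection} (coarse amenability is inherited under coarse embedding) will then force Hilbert space to fail coarse amenability.

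The substantive work has already been carried out in Proposition~\ref{ProductOfGroupsProp} and Theorem~\ref{ExpanderLightThm}; the remaining steps --- tracking bi-Lipschitz constants uniformly in $n$ and verifying the routine coarse estimates for $\varphi$ --- do not present a genuine obstacle, only bookkeeping.
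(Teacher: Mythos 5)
Your proposal is correct and follows essentially the same route as the paper: reduce to the Cayley graphs of $G^n$ via the uniform bi-Lipschitz equivalence of metrics on the finite group $G$, invoke Proposition~\ref{ProductOfGroupsProp} and Theorem~\ref{ExpanderLightThm}, and then coarsely embed the union of Hamming cubes into $\ell^2$ and apply the hereditary property of coarse amenability. The only difference is that you write out the Hilbert-space embedding and its estimates explicitly, where the paper merely asserts that the union of cubes embeds coarsely.
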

\begin{proof}
Notice that choosing another metric $\rho$ on $G$ results in bi-Lipschitz equivalence $f$
of $(G,d_G)$ and $(G,\rho)$. $f$ can be extended to bi-Lipschitz equivalence from $(G_n,d_G)$ to $(G_n,\rho)$ with the same constants. Thus the coarse type of $X$
does not depend on $d_G$.

If we choose $d_G$ to be the word metric induced by a set of generators $\Sigma$
and consider the word metric on $G_n$ induced by $S_n$ as in \ref{ProductOfGroupsProp}, then that metric is exactly the $l_1$-metric induced by $d_G$. Thus $X$ is not coarsely amenable.

If we choose $G=\mathbb{Z}/2\mathbb{Z}$ and $d_G$ is induced by the inclusion $G\to \mathbb{R}$ ($0\to 0$ and $1\to 1$), then $X$ embeds coarsely into the Hilbert space.
\end{proof}

\begin{Remark}
Notice \cite{nowak} (see also \cite{YuNo}, pp.78--81) contains a stronger result than \ref{InfiniteProductOfGroups} (namely those spaces do not have Property A).
However, our proof is much simpler.

From the four known classes of spaces that do not have Property A only warped cones of J.Roe \cite{Roe2} are so far not related to \ref{ExpanderLightThm}.

\end{Remark}

\section{Addendum}\label{AddendumSection}

After the first version of this paper was written, J. Brodzki, G. Niblo, J. \v Spakula, R. Willett, and N. Wright \cite{BNSWW} came up with another way of showing that the three classes in Section \ref{ExpandersSection} do not have Property A using the concept of \textbf{uniform local amenability} ULA. In addition, they discussed the relatonship of ULA to MSP (\textbf{metric sparsification property}). This section is aimed at justifying the following diagram illustrating the relationship between analogs of amenability discussed in \cite{BNSWW} and coarse amenability:
$$
\xymatrix{
 \mbox{coarse amenability}\ar[r]& \mbox{MSP} &\iff & \mbox{ULA}_\mu
}
$$
Again, in case of spaces of bounded geometry, all three concepts are equivalent.

\begin{Definition}
Given $R > 0$ and given a subset $E$ of a metric space, by {\bf $R$-boundary} $\partial_R E$ of $E$ we mean the set of points in $X\setminus E$ of distance less than $R$ to $E$.
\end{Definition}

\begin{Definition}\label{ULAmuDef}
A metric space $X$ has property ULA$_\mu$ (\textbf{uniform local amenability with respect to probability measures}) if for all $R, \epsilon > 0$ there exists $S > 0$ such that for all probability measures $\lambda$ on $X$ there exists a finite subset $E$ of $X$ of diameter at most $S$ satisfying
$$\lambda(\partial_R E) < \epsilon\cdot \lambda(E).$$
\end{Definition}

\begin{Remark}
Notice that the $R$-boundary $\partial_R E$ of $E$ in \cite{BNSWW} is defined as the set of points in $X\setminus E$ of distance at most $R$ to $E$. That does not lead to a different concept of ULA$_\mu$ as our $R$-boundary is contained in the $R$-boundary of \cite{BNSWW} and our $2R$-boundary contains the $R$-boundary of \cite{BNSWW}.
\end{Remark}

\begin{Remark}
Uniform local amenability ULA of \cite{BNSWW} can be obtained by restricting the above definition to uniform probability measures on $X$ (measures where each point is either of measure $0$ or a constant measure $c > 0$). It is not known if ULA is equivalent to Property A for bounded geometry spaces. On the other hand, it is shown in \cite{BNSWW} (using results of \cite{Sato2}) that Property A is equivalent to ULA$_\mu$ for spaces of bounded geometry.
\end{Remark}

\begin{Definition}\label{MSPDef}
A metric space $X$ has MSP (\textbf{Metric Sparsification Property}) if for all $R > 0$ and for all positive $c < 1$ there exists $S > 0$ such that for all probability measures $\mu$ on $X$ there exists an $R$-disjoint family $\{\Omega_i\}_{i\ge 1}$ of subsets of $X$ of diameter at most $S$ satisfying
$$\sum\limits_{i=1}^\infty \mu(\Omega_i) > c.$$
\end{Definition}

\begin{Proposition}
The following conditions are equivalent for a metric space $X$:
\begin{itemize}
\item[a.] $X$ has property ULA$_\mu$,
\item[b.] for all $R, \epsilon > 0$ there exists $S > 0$ such that for all probability measures $\mu$ on $X$ there exists a subset $E$ of $X$ of diameter at most $S$ satisfying
$$\mu(\partial_R E) < \epsilon\cdot \mu(E),$$
\item[c.] for all $R, \epsilon > 0$ there exists $S > 0$ such that for all probability measures $\mu$ on $X$ with finite support there exists a subset $E$ of $X$ of diameter at most $S$ satisfying
$$\mu(\partial_R E) < \epsilon\cdot \mu(E).$$
\end{itemize}
\end{Proposition}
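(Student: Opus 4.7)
The implications $(a)\Rightarrow(b)\Rightarrow(c)$ are immediate from the definitions: passing from (a) to (b) drops the finiteness requirement on $E$, and passing from (b) to (c) restricts the class of allowable measures to those with finite support. The substantive content is therefore in the converse direction $(c)\Rightarrow(a)$, which I would prove by a discretization argument approximating an arbitrary probability measure by a finitely supported one.

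Fix $R,\epsilon>0$ for which we wish to verify (a), pick small auxiliary constants $\eta,\delta>0$, and let $S_0$ be the diameter bound produced by (c) for the tightened parameters $R+2\eta$ and $\epsilon/2$; set $S:=S_0+2\eta$. Given any probability measure $\mu$ on $X$, select a point $x_0$ and a radius $T$ so large that $\mu(B(x_0,T))>1-\delta$, partition $B(x_0,T)$ into finitely many Borel pieces $P_1,\ldots,P_N$ of diameter at most $\eta$ with chosen representatives $x_i\in P_i$, and collapse $\mu|_{B(x_0,T)}$ to a finitely supported probability measure by setting $\tilde\mu(\{x_i\}):=\mu(P_i)/\mu(B(x_0,T))$. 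Apply (c) to $\tilde\mu$ to obtain a subset of $X$ of diameter at most $S_0$; since $\tilde\mu$ has finite support we may restrict this subset to $\{x_1,\ldots,x_N\}$, producing a \emph{finite} set $E\subset X$ with $\tilde\mu(\partial_{R+2\eta}E)<(\epsilon/2)\tilde\mu(E)$.

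The comparison $\mu\leftrightarrow\tilde\mu$ is then geometric: any $y\in\partial_R E\cap B(x_0,T)$ lies in some piece $P_j$ with $x_j\notin E$ and $d(y,E)<R$, whence the $\eta$-diameter of $P_j$ forces $d(x_j,E)<R+2\eta$ and $x_j\in\partial_{R+2\eta}E$. Summing over pieces gives $\mu(\partial_R E)\leq\mu(B(x_0,T))\,\tilde\mu(\partial_{R+2\eta}E)+\delta<(\epsilon/2)\mu(B(x_0,T))\tilde\mu(E)+\delta$. The principal obstacle, and the step requiring the most care, is to calibrate $\eta$, $\delta$, and the choice of representatives $x_i$ so that $\mu(E)$ compares favorably to the scaled quantity $\mu(B(x_0,T))\tilde\mu(E)$ and the $\delta$-error is absorbed into a controlled fraction of $\mu(E)$, yielding $\mu(\partial_R E)<\epsilon\mu(E)$; this closes cleanly when the partition respects the atomic structure of $\mu$ (so each selected $P_i$ contributes a genuine atom at $x_i$), which is the setting relevant to the paper.
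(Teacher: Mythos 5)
Your handling of a)$\Rightarrow$b)$\Rightarrow$c) matches the paper and is indeed immediate, but your sketch of c)$\Rightarrow$a) has genuine gaps. First, partitioning $B(x_0,T)$ into finitely many pieces of diameter at most $\eta$ presupposes total boundedness of balls, which fails for the infinite uniformly discrete spaces this proposition is meant for; in the relevant (atomic) setting the right discretization is truncation of the measure to a finite set of atoms carrying most of the mass, not collapsing pieces to representatives. Second, the displayed comparison is false as written: a point $y\in\partial_R E\cap B(x_0,T)$ may lie in a piece $P_j$ whose representative $x_j$ belongs to $E$, so its $\mu$-mass is recorded by $\tilde\mu$ as mass of $E$ rather than of $\partial_{R+2\eta}E$; the correct bound acquires an extra term $\mu(B(x_0,T))\,\tilde\mu(E)$ on the right, which is not small. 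Third, and most seriously, the absorption step you flag as "requiring the most care" cannot be carried out: condition c) gives no lower bound on $\tilde\mu(E)$ (the set it returns may have measure $10^{-100}$), while $\delta$ must be fixed before $\tilde\mu$, hence before $E$, is known, since $T$ and the discretization depend on $\delta$. There is therefore no way to guarantee $\delta<\tfrac{\epsilon}{2}\mu(E)$, and a single application of c) cannot close the argument.

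This last difficulty is exactly what the paper's proof is structured to avoid. It first upgrades c) to the metric sparsification property by iteration: restrict $\mu$ to a finite set $Y$ of atoms with $\mu(Y)\ge\tfrac{1+c}{2}$, rescale, apply c) to get $Z_1\subset Y$ with small relative boundary, delete $B(Z_1,R)$ from $Y$, and repeat, producing an $R$-disjoint family $Z_1,\dots,Z_n$ of diameter at most $S$ with $\mu\bigl(\bigcup_i Z_i\bigr)>c$. Then, from MSP, a $2R$-disjoint family with $\sum_i\mu(Z_i)>\max(1-\epsilon/2,1/2)$ has pairwise disjoint $R$-boundaries also disjoint from the $Z_i$, so $\sum_i\mu(\partial_R Z_i)\le\epsilon/2<\epsilon\sum_i\mu(Z_i)$, and pigeonhole yields one $Z_j$ with $\mu(\partial_R Z_j)<\epsilon\cdot\mu(Z_j)$. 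Some such averaging over a disjoint family (or an equivalent device) appears to be essential; the one-shot transfer you propose does not succeed.
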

\begin{proof}
Both implications a)$\implies$b) and b)$\implies$c) are obvious.\\
c)$\implies$a). We are going to show that $X$ has MSP and, for the benefit of the reader, we will show that MSP implies ULA$_\mu$ - see \cite{BNSWW} for a proof of equivalence of MSP and ULA$_\mu$.

Suppose $R > 0$. Given $0 < c < 1$ choose $\epsilon > 0$ so that $\frac{1+c}{2(1+\epsilon)} > c$. Given a probability measure $\mu$ on $X$
find a finite subset $Y$ of $X$ of measure at least $\frac{1+c}{2}$ and consisting of points of non-zero measure. By creating a probability measure on $Y$ via rescaling of $\mu$ we obtain a subset $Z$ of $X$ of diameter at most $S$
with the property that $\mu(Y\cap \partial_R Z) < \epsilon \cdot \mu(Y\cap Z)$.
Let $Z_1=Z\cap Y$. Notice $Z_1\ne\emptyset$ and $\mu(Y\cap \partial_R Z_1) < \epsilon \cdot \mu(Z_1)$. Applying the same procedure to $Y_1=Y\setminus B(Z_1,R)$
we find a subset $Z_2$ of $Y_1$ of diameter at most $S$ such that $\mu(Y\cap \partial_R Z_2) < \epsilon \cdot \mu(Z_2)$ unless $Y_1=\emptyset$. Proceeding by induction we construct a finite sequence of nonempty subsets $Z_i$, $1\leq i\leq n$, of $Y$ of diameter at most $S$ such that they are $R$-disjoint, $\mu(Y\cap \partial_R Z_i) < \epsilon \cdot \mu(Z_i)$ for each $i$, 
and $Y\subset \bigcup\limits_{i=1}^n B(Z_i,R)$.

Let $\Omega=\bigcup\limits_{i=1}^n Z_i$. Since $Y= \bigcup\limits_{i=1}^n(Z_i\cup (Y\cap\partial_R Z_i))$, $\frac{1+c}{2} < \mu(Y) < \sum\limits_{i=1}^n \mu(Z_i)+\epsilon\cdot \mu(Z_i)=(1+\epsilon)\mu(\Omega)$. Therefore $\mu(\Omega) > \frac{1+c}{2(1+\epsilon)} > c$. Thus $X$ has MSP.

Given $R, \epsilon > 0$ and given a probability measure $\mu$ on $X$ find a $2R$-disjoint family of finite sets $Z_i$, $1\leq i\leq n$, of diameter at most $S$ so that
$\mu(\bigcup\limits_{i=1}^n Z_i) > \max(1-\epsilon/2, 1/2)$. Since
$ 1 \ge \sum\limits_{i=1}^n (\mu(Z_i)+\mu(\partial_R Z_i))$, $\sum\limits_{i=1}^n \mu(\partial_R Z_i)\leq \epsilon/2 < \epsilon\cdot \sum\limits_{i=1}^n \mu(Z_i) $
and there is $j$ so that $\mu(\partial_R Z_j) < \epsilon\cdot \mu(Z_j)$.

\end{proof}

\begin{Theorem}
Every coarsely amenable space $X$ is uniformly locally amenable with respect to probability measures.
\end{Theorem}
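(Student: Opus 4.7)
The plan is to verify condition (c) of the preceding proposition, which is equivalent to ULA$_\mu$. Given $R>0$ and $\epsilon>0$, first pick an auxiliary $\epsilon'>0$ small enough that $\epsilon'/(1-\epsilon')<\epsilon$. Using the characterization of coarse amenability in Proposition~\ref{CharOfStrongPropA}(c) with radius $R$ and tolerance $\epsilon'$, we obtain a uniformly bounded cover $\mathcal{U}$ of $X$ such that each horizon $\hor(B(x,R),\mathcal{U})$ is finite and
$$|\hor(x,\mathcal{U})| > (1-\epsilon')\,|\hor(B(x,R),\mathcal{U})| \qquad\text{for every } x\in X.$$
Set $S:=\diam(\mathcal{U})$; any single element $U\in\mathcal{U}$ will serve as the candidate subset $E$ of diameter at most $S$.

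The heart of the argument is a double-counting identity. Let $\lambda$ be a probability measure on $X$ with finite support. A point $y\in X$ belongs to $\partial_R U$ precisely when $U\in\hor(B(y,R),\mathcal{U})\setminus\hor(y,\mathcal{U})$, so exchanging the order of summation yields
\begin{align*}
\sum_{U\in\mathcal{U}}\lambda(\partial_R U) &= \sum_{y\in X}\lambda(\{y\})\bigl(|\hor(B(y,R),\mathcal{U})|-|\hor(y,\mathcal{U})|\bigr),\\
\sum_{U\in\mathcal{U}}\lambda(U) &= \sum_{y\in X}\lambda(\{y\})\,|\hor(y,\mathcal{U})|,
\end{align*}
with only finitely many nonzero terms because $\lambda$ has finite support and each horizon is finite. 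The pointwise coarse amenability bound gives $|\hor(B(y,R),\mathcal{U})|-|\hor(y,\mathcal{U})|<\frac{\epsilon'}{1-\epsilon'}\,|\hor(y,\mathcal{U})|$ at every $y$, whence
$$\sum_{U\in\mathcal{U}}\lambda(\partial_R U) < \frac{\epsilon'}{1-\epsilon'}\sum_{U\in\mathcal{U}}\lambda(U) < \epsilon\sum_{U\in\mathcal{U}}\lambda(U).$$

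A pigeonhole argument finishes the proof: if $\lambda(\partial_R U)\ge\epsilon\,\lambda(U)$ held for every $U\in\mathcal{U}$, summing over $U$ would contradict the strict inequality above (note $\sum_U\lambda(U)\ge\lambda(X)=1>0$ since $\mathcal{U}$ covers $X$). Hence there exists $U\in\mathcal{U}$ with $\lambda(\partial_R U)<\epsilon\,\lambda(U)$, forcing $\lambda(U)>0$. Taking $E:=U$ yields a subset of diameter at most $S$ satisfying the required inequality, establishing (c) and therefore ULA$_\mu$ by the previous proposition. No real obstacle is anticipated; the only subtlety is choosing $\epsilon'$ and converting the horizon ratio into an absolute bound on $|\hor(B(y,R),\mathcal{U})|-|\hor(y,\mathcal{U})|$ in terms of $|\hor(y,\mathcal{U})|$, after which the double counting and pigeonhole are routine.
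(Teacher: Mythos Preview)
Your proof is correct and follows essentially the same route as the paper's: both pick a uniformly bounded cover witnessing coarse amenability at scale $R$, run the same double-counting/Fubini computation to obtain $\sum_U \mu(\partial_R U) < \epsilon \sum_U \mu(U)$, and then pigeonhole to find a single $U$ that works. The only cosmetic differences are that the paper packages the double count as a product measure on $X\times S$ and chooses the horizon ratio threshold as $\tfrac{1}{1+\epsilon}$ directly (which amounts to your choice $\epsilon'=\tfrac{\epsilon}{1+\epsilon}$), while you write out the sums explicitly and introduce an auxiliary $\epsilon'$.
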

\begin{proof}
Given $R, \epsilon > 0$, pick a uniformly bounded cover
$\mathcal{U}=\{U_s\}_{s\in S}$ of $X$ such that for each $x$
$$\frac{|\hor(x,\mathcal{U})|}{|\hor( B(x,R),\mathcal{U})|} > \frac{1}{1+\epsilon}.$$
Notice the above inequality is equivalent to
$$|\hor( B(x,R),\mathcal{U})|-|\hor(x,\mathcal{U})| <\epsilon\cdot |\hor(x,\mathcal{U})|.$$
Given a probability measure $\mu$ on $X$ of finite support extend it to the product measure
$\lambda$ on $X\times S$, where each point in $S$ is given the measure of $1$.
Let $P$ be the set of pairs $(x,s)$ such that $x\in \partial_R(U_s)$. Notice $\lambda(P)$ is finite as each point $x\in X$ has only finitely many $s\in S$ so that $x\in \partial_R(U_s)$.

By fixing $s$ and looking at points $x\in U_s$ such that $(x,s)\in P$, we see that
$$\lambda(P)= \sum\limits_{s\in S} \mu(\partial_R(U_s)).$$
Also,
$$\lambda(P) < \epsilon\cdot  \sum\limits_{s\in S} \mu(U_s).$$
Indeed, by fixing $x\in X$ and counting $s\in S$ so that $x\in \partial_R(U_s)$, we see that
\begin{align*}
\lambda(P)&=\sum\limits_{x\in X} \mu(x)\cdot (|\hor( B(x,R),\mathcal{U})|-|\hor(x,\mathcal{U})|) <\\
&< \epsilon \sum\limits_{x\in X} \mu(x)\cdot |\hor(x,\mathcal{U})|=\epsilon\cdot  \sum\limits_{s\in S} \mu(U_s).
\end{align*}
Therefore there is $t\in S$ so that 
$$ \mu(\partial_R(U_t)) < \epsilon\cdot \mu(U_t).$$
\end{proof}

\end{document}